\title{Strong Accessibility of Coxeter Groups over Minimal Splittings}
\author{M. Mihalik and S. Tschantz}
\newtheorem{theorem}{Theorem}
\newtheorem{proposition}[theorem]{Proposition}
\newtheorem{lemma}[theorem]{Lemma}
\newtheorem{corollary}[theorem]{Corollary}
\newcounter{remarknum}
\newenvironment{remark}{\addvspace{12pt}\refstepcounter{remarknum}
\noindent{\bf Remark \arabic{remarknum}.}}{\par\addvspace{12pt}}
\newenvironment{proof}{\addvspace{10pt}\noindent{\bf Proof:}}{
$\Box$\par\addvspace{10pt}}
\newcounter{examplenum}
\newenvironment{example}{\addvspace{12pt}\refstepcounter{examplenum}
\noindent{\bf Example \arabic{examplenum}.}}{\par\addvspace{12pt}}
\date{February 25, 2010 
}
\begin{document}
\maketitle

\begin{abstract}
Given a class of groups $\mathcal C$, a group $G$ is strongly accessible over $\mathcal C$ if there is a bound on the number of terms in a sequence $\Lambda_1,\Lambda_2,\ldots , \Lambda_n$ of graph of groups decompositions of $G$ with edge groups in $\mathcal C$ such that 
$\Lambda_1$ is the trivial
decomposition (with 1-vertex) and for $i>1$, $\Lambda_i$ is
obtained from $\Lambda _{i-1}$ by non-trivially and compatibly splitting a
vertex group of $\Lambda_{i-1}$ over a group in $\mathcal C$, replacing this vertex group
by the splitting and then reducing. If $H$ and $K$ are
subgroups of a group $G$ then $H$ is {\it smaller} than $K$ if
$H\cap K$ has finite index in $H$ and infinite index in $K$. The {\it minimal
splitting subgroups of $G$}, are the subgroups $H$
of $G$, such that $G$ splits non-trivially (as an amalgamated
product or HNN-extension) over $H$ and for any other splitting
subgroup $K$ of $W$, $K$ is not smaller than $H$. When $G$ is a finitely generated Coxeter group, minimal splitting subgroups are always finitely generated. Minimal splittings are explicitly or implicitly important aspects of Dunwoody's work on accessibility and the JSJ results of Rips-Sela, Dunwoody-Sageev and Mihalik. Our main results are that Coxeter groups are strongly accessible over minimal splittings and if $\Lambda$ is an irreducible graph of groups decomposition of a Coxeter group with minimal splitting edge groups, then the vertex and edge groups of $\Lambda$ are Coxeter.
\end{abstract}

\noindent Subject Classifications: 20F65, 20F55, 20E08

\section{Introduction}

In \cite{Stallings}, J. Stallings proved that finitely generated
groups with more than one end split non-trivially as an
amalgamated product $A\ast_CB$ (where non-trivial means $A\ne C\ne B$) or an HNN-extension $A\ast_C$ with $C$ a finite group. In about 1970, C. T. C. Wall raised questions about whether or not one could begin with a group $A_0$ and for $i>0$, produce a infinite sequence of non-trivial splittings, $A_i\ast _{C_i}B_i$ or $A_i\ast _{C_i}$  of $A_{i-1}$, with $C_i$ is finite. When such a sequence could not exist, Wall called the group $A_0$, {\it accessible} over such splittings. In
\cite{Dunwoody} M. Dunwoody proved that finitely presented groups
are accessible with respect to splittings over finite groups.
This implies that for a finitely presented group $G$ there is no
infinite sequence $\Lambda_1, \Lambda_2, \ldots $ of graph of
groups decomposition of $G$ such that $\Lambda_1$ is the trivial
decomposition (with 1-vertex) and for $i>1$, $\Lambda_i$ is
obtained from $\Lambda _{i-1}$ by non-trivially splitting a
vertex group over a finite group, replacing this vertex group
by the splitting and then reducing. (For splittings over finite groups there is
never a compatibility problem.) Instead any such sequence of
decompositions must terminate in one in which each vertex group
is either 1-ended or finite and all edge groups are finite. The
class of {\it small} groups is defined in terms of actions on
trees and is contained in the class of groups that contain no
non-abelian free group as a subgroup. In \cite{BestvinaFeighn}, M.
Bestvina and M. Feighn show that for a finitely presented group
$G$ there is a bound $N(G)$ on the number of edges in a
reduced graph of groups decomposition of $G$, when edge groups
are small. Limits of this sort are generally called
``accessibility" results. If $\mathcal C$ is a class of groups then call a graph of groups decomposition of a group $G$ with edge groups in $\mathcal C$ a $\mathcal C$-{\it decomposition} of $G$. A group $G$ is called {\it strongly
accessible} over $\cal C$ if there is a bound on the number of terms in a sequence $\Lambda _1, \Lambda _2, \ldots , \Lambda_n$ of
$\mathcal C$-decompositions of $G$, such that $\Lambda_1$ 
is the trivial decomposition, and for $i> 1$, $\Lambda
_i$ is obtained from $\Lambda _{i-1}$ by replacing a vertex group of $\Lambda _{i-1}$ with a compatible splitting $A\ast_CB$ or $A\ast_C$ ($C\in \cal C$) and then reducing. We call a group $G$ 
{\it accessible} over a class of groups $\cal C$ if there is a bound $N(G)$ on the number of edge groups in a reduced graph of groups decomposition of $G$ with edge groups in $\cal C$. Certainly strong accessibility implies accessibility. Dunwoody's theorem is a strong accessibility result for finitely presented groups over the class of finite groups. We know of no example where accessibility and strong accessibility are different. 

In this paper, we produce accessibility results for
finitely generated Coxeter groups. In analogy with the 1-ended
assumptions of Rips-Sela \cite{RipsSela}, and the minimality
assumptions of \cite{DunwoodySageev}, we consider the class $M(W)$
of minimal splitting subgroups of $W$. If $H$ and $K$ are
subgroups of a group $W$ then $H$ is {\it smaller} than $K$ if
$H\cap K$ has finite index in $H$ and infinite index in $K$. If
$W$ is a group, then define $M(W)$, the set of {\it minimal
splitting subgroups of $W$}, to be the set of all subgroups $H$
of $W$, such that $W$ splits non-trivially (as an amalgamated
product or HNN-extension) over $H$ and for any other splitting
subgroup $K$ of $W$, $K$ is not smaller than $H$. 

\begin{remark} 
A minimal splitting subgroup of a finitely generated Coxeter group $W$ is finitely generated. This follows from remark 1 of \cite{MTVisual}. Suppose  $A\ast_CB$ is a non-trivial splitting of $W$ and $C$ is not finitely generated. There a reduced visual decomposition of $W$ with (visual and hence finitely generated) edge group $E$ such that a conjugate of $E$ is a subgroup of $C$. Hence some conjugate of $E$ is smaller than $C$. 
\end{remark}

Finite
splitting subgroups are always minimal and if a group is 1-ended,
then any 2-ended splitting subgroup is minimal. Our main theorem
is:

\begin{theorem}\label{Main} 
Finitely generated Coxeter groups are strongly accessible over
minimal splittings.
\end{theorem}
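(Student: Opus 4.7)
The plan is to induct on the rank $|S|$ of a Coxeter system $(W,S)$ for $W$, using two structural tools: the second main result announced in the abstract (in an irreducible graph of groups decomposition of a Coxeter group with minimal splitting edge groups, the vertex and edge groups are themselves Coxeter), and the visual decomposition machinery already signaled in Remark 1. Together these will keep the entire inductive process inside the category of finitely generated Coxeter groups and force a strict rank decrease at every step of any admissible sequence.

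Concretely, I would define a bound $N(n)$ on the length of any sequence $\Lambda_1,\Lambda_2,\ldots$ of minimal-splitting decompositions for a Coxeter group of rank $n$, with base case $N(0)=N(1)=1$ (trivial and rank-$1$ Coxeter groups are finite and admit no non-trivial splittings). For the inductive step, consider an arbitrary such sequence for $W$ with $|S|=n$. The first non-trivial step produces $\Lambda_2$ whose vertex groups are Coxeter by the second main result; invoking the visual machinery, I would argue that each vertex group admits a Coxeter system of rank strictly less than $n$. Subsequent steps of the sequence operate inside vertex groups (or further refinements thereof), so by induction the number of steps expended inside each vertex group is at most $N(n-1)$. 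Combined with a uniform bound on the number of vertices of $\Lambda_2$ (obtained from Bestvina--Feighn style accessibility applied to the finitely generated minimal splitting subgroups, which in Coxeter groups are small in the relevant sense), this yields a recursive bound of the form $N(n)\le 1+V(n)\cdot N(n-1)$.

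The main obstacle is justifying the rank-drop, because an arbitrary minimal splitting of $W$ is not a priori visual. The key technical claim I would have to establish is that any non-trivial minimal splitting $W=A*_C B$ or $W=A*_C$ can be replaced by, or compared with, a visual splitting $W=W_{S_A}*_{W_{S_A\cap S_B}}W_{S_B}$ (or its HNN analogue) where $S_A,S_B\subsetneq S$. Non-triviality of the visual splitting then forces $|S_A|,|S_B|<|S|$, which is exactly the rank decrease needed to feed the induction. The interplay between minimality of the edge group and the parabolic/visual structure of Coxeter systems does the real work here; the argument in Remark 1 comparing an arbitrary edge group $C$ to a visual edge group $E$ is the right prototype, and I expect the heart of the paper to be an analogous upgrade for the vertex groups.

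A secondary, more routine obstacle is tracking compatibility of splittings through the induction: when one refines a vertex group $V$ of $\Lambda_{i-1}$ over some $C\in M(V)$, one must check that $C$ is also minimal in the ambient sense needed at the $W$-level, so that the inductive hypothesis applies verbatim inside $V$. I would handle this by showing that a subgroup minimal in $V$ for splittings cannot be made smaller by an ambient splitting of $W$, appealing again to the visual/parabolic dictionary to translate between the two settings.
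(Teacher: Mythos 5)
There is a genuine gap, and it sits exactly where you locate "the heart of the paper," but the fix you sketch does not work. Your induction needs the vertex groups of $\Lambda_2$ (and of every later $\Lambda_i$) to be Coxeter groups of strictly smaller rank, and you cite the second main result for this. But that result (theorem \ref{Close}) applies only to decompositions that are \emph{irreducible} with respect to $M(W)$ splittings; the intermediate terms of a strong accessibility sequence are not irreducible, and that is precisely the regime the theorem must control. A first splitting $W=A\ast_CB$ over a minimal $C$ can be ``artificial'' in the sense discussed after theorem \ref{MT1} (of the shape $\langle A\cup H\rangle\ast_{\langle C\cup H\rangle}B$), and theorem \ref{T1N} only pins its factors down up to pieces conjugate to groups in $K(W,S)$, i.e.\ groups $\langle A_0\rangle\times F$ with $F$ finite; such factors need not be Coxeter and carry no visual system of rank $<|S|$, so the rank induction has no foothold. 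Even granting Coxeter vertex groups, the inductive hypothesis does not apply ``verbatim inside $V$'': the sequence splits vertex groups over groups in $M(W)$, and $M(W)$ restricted to a vertex group neither contains nor is contained in $M(V)$ in general (see the discussion at the start of \S 6), so strong accessibility of $V$ over $M(V)$ does not bound those steps. Your recursion $N(n)\le 1+V(n)\cdot N(n-1)$ also presumes the sequence decomposes into independent subsequences per vertex of $\Lambda_2$, but the example opening \S 5 shows a compatible splitting followed by reduction can \emph{merge} material across the decomposition and even decrease the number of edges, which is exactly why naive per-vertex or edge-count bookkeeping fails; and the appeal to Bestvina--Feighn is unavailable because minimal splitting subgroups of a Coxeter group need not be small (they can be $\langle A\rangle\times F$ with $\langle A\rangle$ containing non-abelian free subgroups).

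For contrast, the paper avoids rank induction entirely. It introduces the finite set $K(W,S)$ of subgroups $\langle A\rangle\times M$ ($A\subset S$, $M$ finite in $\langle lk_2(A)\rangle$), assigns to each vertex group $G$ the count $n(G)$ of elements of $K(W,S)$ contained in conjugates of $G$, and uses the complexity $c(\Lambda)=\sum_i 3^i c_i(\Lambda)$. The engine is proposition \ref{Kreduce} (resting on theorem \ref{T1N}, proposition \ref{P2N} and lemma \ref{Kequal}): any non-trivial compatible $M(W)$-splitting $A\ast_CB$ of a vertex group loses some element of $K(W,S)$ on each side, so $n(A),n(B)<n(G)$ and $c$ drops strictly at every move (including reductions), giving the uniform bound $3^{|K(W,S)|}$ on the length of any sequence. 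If you want to salvage your outline, the missing ingredient is precisely a substitute for proposition \ref{Kreduce}: some quantity that strictly decreases under an \emph{arbitrary} compatible minimal splitting of a possibly non-Coxeter, non-visual vertex group, not just under visual splittings of visual vertex groups.
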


Our basic reference for Coxeter groups is Bourbaki
\cite{Bourbaki}. A {\it Coxeter presentation} is given by
$$\langle S: m(s,t)\ (s,t\in S,\ m(s,t)<\infty )\rangle$$ where
$m:S^2\to \{1,2,\ldots ,\infty \}$ is such that $m(s,t)=1$ iff
$s=t$ and $m(s,t)=m(t,s)$. The pair $(W,S)$ is called a {\it
Coxeter system}. In the group with this presentation, the elements
of $S$ are distinct elements of order 2 and a product $st$ of
generators has order $m(s,t)$. Distinct generators commute if and
only if $m(s,t)=2$. A subgroup of $W$ generated by a subset $S'$
of $S$ is called {\it special} or {\it visual}, and the pair
$(\langle S'\rangle, S')$ is a Coxeter system with $m':(S')^2\to \{1,2,\ldots ,\infty\}$ the
restriction of $m$. A simple analysis of a Coxeter
presentation allows one to construct all decompositions of $W$
with only visual vertex and edge groups from that Coxeter
presentation. In \cite{MTVisual}, the authors show that for any finitely generated
Coxeter system $(W,S)$ and any graph of groups decomposition $\Lambda$ of $W$, there is an associated ``visual" graph
of groups decomposition $\Psi$ of $W$ with edge and vertex groups
visual, and such that each vertex (respectively edge) group of $\Psi$ is contained in a conjugate of a vertex
(respectively edge) group of $\Lambda$. This result
is called ``the visual decomposition theorem for finitely
generated Coxeter groups", and we say $\Psi$ is {\it a visual decomposition for $\Lambda$}. Clearly accessibility of finitely generated Coxeter groups is not violated by only visual decompositions. But, we give an example in \cite{MTVisual}, of a
finitely generated Coxeter system $(W,S)$ and a sequence
$\Lambda_i$ ($i\geq 1$) of (non-visual) reduced graph of groups
decompositions of $W$, such that $\Lambda _i$ has $i$-edge groups
and, for $i>1$, $\Lambda _i$ is obtained by compatibly splitting
a vertex group of $\Lambda _{i-1}$. Hence, even in the light of
the visual decomposition theorem and our accessibility
results here, there is no accessibility for Coxeter groups
over arbitrary splittings.

Theorem \ref{Main} implies there are irreducible decompositions of finitely generated Coxeter groups, with minimal splitting edge groups. Our next result implies that any such irreducible decomposition
has an ``equivalent" visual counterpart.

\begin{theorem}\label{Close}
Suppose $(W,S)$ is a Coxeter system and $\Lambda$ is a reduced
graph of groups decomposition of $W$ with $M(W)$ edge groups. If
$\Lambda$ is irreducible with respect to $M(W)$ splittings, and
$\Psi$ is a reduced graph of groups decomposition such that each edge group of $\Psi$ is in $M(W)$, each vertex group of $\Psi$ is a subgroup of a conjugate of a vertex group of  
$\Lambda$, and each edge group of $\Lambda$ contains a conjugate of an edge group of $\Psi$ (in particular if $\Psi$ is a reduced visual graph of groups decomposition for $(W,S)$ derived from $\Lambda$ as in the main theorem of \cite{MTVisual}), then
\begin{enumerate}
\item  $\Psi$ is irreducible with respect to $M(W)$ splittings

\item  There is a (unique) bijection $\alpha$ of the vertices
of $\Lambda$ to the vertices of $\Psi$ such that for each vertex
$V$ of $\Lambda$, $\Lambda(V)$ is conjugate to $\Psi(\alpha (V))$

\item  When $\Psi$ is visual, each edge group of $\Lambda$ is conjugate to a visual
subgroup for $(W,S)$.
\end{enumerate}
\end{theorem}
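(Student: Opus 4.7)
The plan is to establish, via a Bass--Serre correspondence, that the vertex groups of $\Lambda$ and $\Psi$ pair up by conjugacy, from which (2) is immediate, (1) follows by transporting splittings across the correspondence, and (3) follows from a closer look at the edge groups.

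I begin with the observation that each edge group $E$ of $\Lambda$ contains a conjugate $gE'g^{-1}$ of a $\Psi$-edge group with \emph{finite} index. Both lie in $M(W)$ by hypothesis, and infinite index would make $gE'g^{-1}$ smaller than $E$, contradicting $E\in M(W)$. Consequently every $\Lambda$-edge group has a finite orbit on the Bass--Serre tree $T_\Psi$ and therefore fixes a vertex there. The dual containment is free: each $\Psi$-vertex group already fixes a vertex of $T_\Lambda$ by the hypothesis that it lies in a conjugate of a $\Lambda$-vertex group, and each $\Psi$-edge group inherits the same.

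The crux is to show that each vertex group $\Lambda(V)$ also fixes a vertex of $T_\Psi$. I would argue by contradiction: if not, the minimal $\Lambda(V)$-invariant subtree $T_V\subset T_\Psi$ produces a non-trivial graph-of-groups decomposition $\Lambda_V$ of $\Lambda(V)$ whose edge groups are of the form $\Lambda(V)\cap hE'h^{-1}$. Each $\Lambda$-edge group incident to $V$ lies in $\Lambda(V)$ and fixes a vertex of $T_\Psi$, hence fixes the closest point of $T_V$; this makes $\Lambda_V$ compatible with $\Lambda$ and yields a genuine refinement. To reach the desired contradiction with irreducibility of $\Lambda$ over $M(W)$, one must verify that the edge groups of $\Lambda_V$ themselves lie in $M(W)$, and this is the step I expect to be the main obstacle. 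My plan would be to combine the same smaller-than comparison used for $\Lambda$-edges with the visual decomposition theorem applied to $\Lambda(V)$, which should allow one to replace $\Lambda_V$ by a visual counterpart whose edge groups inherit minimality from their ambient $\Psi$-edges.

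Once each $\Lambda(V)$ is known to fix a $T_\Psi$-vertex, the inclusion $\Lambda(V)\subseteq g\Psi(V')g^{-1}$ combines with the hypothesis $\Psi(V')\subseteq h\Lambda(U)h^{-1}$ into a chain of conjugate inclusions among $\Lambda$-vertex groups; reducedness of $\Lambda$ then forces $U=V$ and equality, delivering the bijection $\alpha$ of (2). Statement (1) follows by transport: any non-trivial compatible $M(W)$-splitting of a $\Psi$-vertex group crosses $\alpha$ to one of $\Lambda$, contradicting its irreducibility. For (3), the opening edge observation shows each $\Lambda$-edge is a finite extension of a conjugate of a visual $\Psi$-edge; a final appeal to irreducibility upgrades this finite extension to equality, since any proper finite-index containment would itself generate an $M(W)$-refinement of $\Lambda$, again violating irreducibility.
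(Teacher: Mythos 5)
Your overall strategy (act by $\Lambda$-vertex groups on $T_\Psi$, use minimality of $\Lambda$-edge groups to get finite-index containments and hence fixed vertices, then invoke reducedness via Lemma \ref{Sub} to pair vertex groups up by conjugacy) is the same skeleton as the paper's, which runs everything through Theorem \ref{T1N}. But the step you yourself flag as ``the main obstacle'' is a genuine gap, and your proposed fix would not work as stated. You need to know that the edge groups of the induced decomposition $\Lambda_V$ of $\Lambda(V)$ lie in $M(W)$ before you can contradict irreducibility of $\Lambda$; your plan to ``apply the visual decomposition theorem to $\Lambda(V)$'' is not available, because at this point $\Lambda(V)$ is not known to be a Coxeter group with any specified system --- that is precisely part of what the theorem is proving. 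The paper supplies the missing step as Theorem \ref{T1N}(3): the $\Lambda_V$-edge groups are intersections of $\Lambda(V)$ with stabilizers of edges of $T_\Psi$, hence subgroups of conjugates of $\Psi$-edge groups, which are in $M(W)$ by hypothesis; compatibility (Theorem \ref{T1N}(1), proved via Dicks--Dunwoody 4.8) shows $W$ splits non-trivially over each of them; and a short commensurability argument (a splitting subgroup contained in a member of $M(W)$ must have finite index in it, and is then itself minimal) gives membership in $M(W)$. Some version of this argument must appear explicitly; without it the contradiction with irreducibility of $\Lambda$ never materializes. A related, smaller omission: in your ``transport'' proof of (1), compatibility with $\Lambda$ of the transported splitting of $\Lambda(V)$ is not automatic and needs the same finite-index/fixed-point argument; the paper sidesteps this by refining $\Psi$ to $\Psi_1$, checking $\Psi_1$ still satisfies the hypotheses, and re-applying part (2) to get the contradiction.

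Your argument for (3) would fail. From a proper containment $g\Psi(D)g^{-1}\subsetneq \Lambda(E)$ of finite index you cannot manufacture a compatible $M(W)$-refinement of $\Lambda$ --- there is no general way to split a vertex group over a finite-index subgroup of an incident edge group compatibly --- so ``irreducibility upgrades finite index to equality'' is unjustified; and in any case the theorem does not assert that $\Lambda(E)$ is conjugate to a $\Psi$-edge group, only that it is conjugate to a visual subgroup. The paper's proof of (3) is different and elementary given (2): since the underlying graph is a tree, each edge group of $\Lambda$ is (up to conjugacy) the intersection of its two endpoint vertex groups; by (2) these are conjugates of visual subgroups, and Lemma \ref{Kil} shows that the intersection of a visual subgroup with a conjugate of a visual subgroup is conjugate to a visual subgroup. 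You should replace your final paragraph on (3) with this intersection argument.
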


The vertex groups of $\Lambda$ in theorem \ref{Close} are Coxeter, and when $W$ is not indecomposable, they have fewer generators than there are in $S$. Hence they have irreducible decompositions of the same type. As the number of Coxeter generators decreases each time we pass from a non-indecomposable vertex group to a vertex group of an irreducible decomposition with minimal splitting edge groups for that vertex group, eventually this must process must terminate with (up to conjugation) irreducible visual subgroups of $(W,S)$. These terminal groups are maximal FA subgroups of $W$ and must be conjugate to the visual subgroups of $W$ determined by maximal complete subsets of the presentation diagram $\Gamma (W,S)$ (see \cite{MTVisual}).

The paper is laid out as follows: in \S 2 we state the visual
decomposition theorem and review the basics of graphs of groups
decompositions.

In \S 3, we list several well-known technical facts about Coxeter
groups. \S 3 concludes with an argument that shows an infinite
subgroup of a finitely generated Coxeter group $W$ (with Coxeter
system $(W,S)$), containing a visual finite index subgroup
$\langle A\rangle$ ($A\subset S$) decomposes as $\langle
A_0\rangle \times F$ where $A_0\subset A$ and $F$ is a finite
subgroup of a finite group $\langle D\rangle$ where $D\subset S$
and $D$ commutes with $A_0$. This result makes it possible for us
to understand arbitrary minimal splitting subgroups of $W$ in our
analysis of strong accessibility.

In \S 4, we begin our analysis of $M(W)$ by classify the visual
members of $M(W)$ for any Coxeter system $(W,S)$. 
Proposition \ref{L24N} shows that for a non-trivial splitting $A\ast_CB$ of a finitely generated Coxeter group $W$ over a non-minimal group $C$, there is a splitting of $W$ over a minimal splitting subgroup $M$, such that $M$ is smaller than $C$. I.e. all non-trivial splittings of a finitely generated Coxeter group are ``refined" by minimal splittings. Theorem \ref {T1N} is the analogue of theorem \ref{artificial} (from \cite{MTVisual}), when edge groups of a graph of groups decomposition of a finitely generated Coxeter group are minimal splitting subgroups.  The implications with this additional ``minimal splitting" hypothesis far exceed the conclusions of theorem \ref{artificial}  and supply one of the more important technical results of paper. Roughly speaking, proposition \ref{P3N} says that any graph of groups decomposition of a finitely generated Coxeter group with edge groups equal to minimal splitting subgroups of the Coxeter group is, up to ``artificial considerations", visual. Proposition \ref{P3N} gives  another key idea towards the proof of the main theorem. It allows us to define a descending sequence of positive integers corresponding to a given sequence of graphs of groups as in the main theorem. 
Finally, theorem \ref{vismin} is a
minimal splitting version of the visual decomposition theorem of \cite{MTVisual}.

In \S 5, we define what it means for a visual decomposition of a
Coxeter group $W$, with $M(W)$ edge groups, to look irreducible
with respect to $M(W)$ subgroups. We show that a visual
decomposition looks irreducible if and only if it is irreducible.
This implies that all irreducible visual decompositions of a
Coxeter group can be constructed by an elementary algorithm.
Our main results, theorems \ref{Main} and \ref{Close} are proved
in \S5.

In the final section, \S 6, we  begin with a list  of generalizations of our
results that follow from the techniques of the paper. Then, we give an analysis of minimal splitting subgroups of ascending HNN extensions, followed by a complete analysis of minimal splittings of general finitely generated groups that contain no  non-abelian free group. This includes an analysis of Thompson's group $F$. We conclude
with a list of questions.

\section{Graph of Groups and Visual Decompositions}

Section 2 of \cite{MTVisual} is an introduction to graphs of
groups that is completely sufficient for our needs in this paper.
We include the necessary terminology here. A graph of groups
$\Lambda$ consists of a set $V(\Lambda)$ of vertices, a set
$E(\Lambda)$ of edges, and maps $\iota,\tau:E(\Lambda)\to
V(\Lambda)$ giving the initial and terminal vertices of each edge
in a connected graph, together with vertex groups $\Lambda(V)$
for $V\in V(\Lambda)$, edge groups $\Lambda(E)$ for $E\in
E(\Lambda)$, with $\Lambda(E)\subset\Lambda(\iota(E))$ and an injective
group homomorphism $t_E:\Lambda(E)\to\Lambda(\tau(E))$, called
the edge map of $E$ and denoted by $t_E:g\mapsto g^{t_E}$. The
fundamental group $\pi(\Lambda)$ of a graph of groups $\Lambda$
is the group with presentation having generators the disjoint
union of $\Lambda(V)$ for $V\in V(\Lambda)$, together with a
symbol $t_E$ for each edge $E\in E(\Lambda)$, and having as
defining relations the relations for each $\Lambda(V)$, the
relations $gt_E=t_Eg^{t_E}$ for $E\in E(\Lambda)$ and
$g\in\Lambda(\iota(E))$, and relations $t_E=1$ for $E$ in a given
spanning tree of $\Lambda$ (the result, up to isomorphism, is
independent of the spanning tree taken).

If $V$ is a vertex of a graph of groups decomposition $\Lambda$ of
a group $G$ and $\Phi$ is a decomposition of $\Lambda(V)$ so that
for each edge $E$ of $\Lambda$ adjacent to $V$, $\Lambda(E)$ is
$\Lambda (V)$-conjugate to a subgroup of a vertex group of
$\Phi$, then $\Phi$ is {\it compatible} with $\Lambda$. Then $V$
can be replaced by $\Phi$ to form a finer graph of groups
decomposition of $G$.

A graph of groups is {\it reduced} if no edge between distinct
vertices has edge group the same as an endpoint vertex group. If
a graph of groups is not reduced, then we may collapse a vertex
across an edge, giving a smaller graph of groups decomposition of
the group.

If there is no non-trivial homomorphism of a group to the infinite
cyclic group $\mathbb Z$, then a graph of groups decomposition of
the group cannot contain a loop. In this case, the graph is a
tree. In particular, any graph of groups decomposition of a
Coxeter group has underlying graph a tree.

Suppose $\langle S: m(s,t)\ (s,t\in S,\ m(s,t)<\infty )\rangle$
is a Coxeter presentation for the Coxeter group $W$. The {\it
presentation diagram} $\Gamma(W,S)$ of $W$ with respect to $S$
has vertex set $S$ and an undirected edge labeled $m(s,t)$
connecting vertices $s$ and $t$ if $m(s,t)<\infty$. It is evident
from the above presentation that if a subset $C$ of $S$ separates
$\Gamma (W,S)$, $A$ is $C$ union some of the components of
$\Gamma -C$ and $B$ is $C$ union the rest of the components, then
$W$ decomposes as $\langle A\rangle \ast _{\langle C\rangle } \langle B\rangle$. This generalizes to graphs of
groups decompositions of Coxeter groups where each vertex and
edge group is generated by a subset of $S$. We say that $\Psi$ is a {\it visual graph of groups decomposition of} $W$ (for a given $S$), if each vertex and edge group of $\Psi$ is a special subgroup of $W$, the injections of each edge group into its endpoint vertex groups are given simply by inclusion, and the fundamental group of  $\Psi$ is isomorphic to $W$ by the homomorphism induced by the inclusion map of vertex groups into $W$. If $C$ and $D$ are subsets of $S$, then we say $C$ {\it separates} $D$ {\it in} $\Gamma$ if there are points $d_1$ and $d_1$ of $D-C$, such that any path in $\Gamma$ connecting $d_1$ and $d_2$ contains a point of $C$.

The following lemma of \cite{MTVisual} makes it possible to understand when a graph of groups with special subgroups has fundamental group $W$.

\begin{lemma}\label{MT2} 
Suppose $(W,S)$ is a Coxeter system.  A graph of groups $\Psi$
with graph a tree, where each vertex group and edge group is a
special subgroup and each edge map is given by inclusion, is a
visual graph of groups decomposition of $W$ iff each edge in the
presentation diagram of $W$ is an edge in the presentation diagram
of a vertex group and, for each generator $s\in S$, the set of
vertices and edges with groups containing $s$ is a nonempty
subtree in $\Psi$.
\end{lemma}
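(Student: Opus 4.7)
The plan is to prove both directions using Bass-Serre theory applied to the Bass-Serre tree $T$ of $\Psi$: since $\Psi$ has tree underlying graph, the base copy of $\Psi$ embeds in $T$, and we write $\tilde V_i$ for the base vertex corresponding to $V_i=\langle T_i\rangle$ and $\sigma_s$ for the set of vertices and edges of $\Psi$ whose group contains $s\in S$.

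Forward direction. Assume $\Psi$ is a visual decomposition of $W$, so inclusions induce an isomorphism $\phi\colon\pi(\Psi)\to W$. I first derive condition (b). Since the vertex groups generate $W$ under $\phi$ and each is visual, $W=\langle\bigcup_i T_i\rangle$; since no $s\in S$ can be expressed as a product of elements of $S\setminus\{s\}$, we get $\bigcup_i T_i=S$, so each $s\in S$ lies in some $T_i$, giving nonemptiness of $\sigma_s$. If $s\in V_i\cap V_j$, then $s$ fixes $\tilde V_i$ and $\tilde V_j$, hence fixes the unique geodesic between them in $T$; since $\Psi$ is a tree, this geodesic lies in the embedded base copy and projects to the path in $\Psi$ from $V_i$ to $V_j$, so $s$ belongs to every vertex and edge group along that path. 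Hence $\sigma_s$ is a subtree. For condition (a), suppose $m(s,t)<\infty$; if $\sigma_s\cap\sigma_t=\emptyset$, pick a nearest pair $V_i\in\sigma_s$, $V_j\in\sigma_t$ and consider the connecting path. Somewhere on it $s$ fails to lie in the edge group (entering the complement of $\sigma_s$), and similarly for $t$. A Bass-Serre normal-form argument in $\pi(\Psi)$ then shows $(st)^n$ admits a reduced representative of positive length for each $n\ge 1$, forcing $st$ to have infinite order and contradicting $m(s,t)<\infty$. Hence some $V_i$ contains both $s$ and $t$; since $S$-generators of $W$ lying in a special subgroup must lie in its generating set, $\{s,t\}\subset T_i$, giving the edge condition.

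Reverse direction. Assume both conditions and build the inverse $\psi\colon W\to\pi(\Psi)$ from the Coxeter presentation. For each $s\in S$, condition (b) gives a vertex group $V_i=\langle T_i\rangle$ with $s\in T_i$; set $\psi(s)$ to be the generator $s$ of $V_i$ in $\pi(\Psi)$. If $V_j$ is another vertex group containing $s$, the subtree property of (b) puts $s$ in every edge group on the path from $V_i$ to $V_j$, so the edge inclusions identify the two copies in $\pi(\Psi)$ and $\psi(s)$ is well-defined. The relation $s^2=1$ holds inside $V_i$, and for $m(s,t)<\infty$ condition (a) supplies a common vertex group in which $(st)^{m(s,t)}=1$ already holds. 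So $\psi$ descends to a homomorphism. Clearly $\phi\circ\psi=\mathrm{id}_W$; and $\psi\circ\phi$ fixes each generator of $\pi(\Psi)$ (any element of $V_i=\langle T_i\rangle$ is a word in $T_i\subset S$, and $\psi$ returns the corresponding generator of $V_i$ by well-definedness), so $\psi\circ\phi=\mathrm{id}_{\pi(\Psi)}$.

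The principal obstacle is the Bass-Serre normal-form argument in condition (a)'s forward direction: when $\sigma_s\cap\sigma_t=\emptyset$, the edges on the connecting path whose edge groups miss $s$ (respectively $t$) must force products like $(st)^n$ not to collapse in $\pi(\Psi)$. Once this standard but delicate Bass-Serre computation is in hand, the remainder of the proof is essentially bookkeeping organized by the subtree condition.
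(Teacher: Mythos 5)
This lemma is imported from \cite{MTVisual} and the present paper gives no proof of it, so your argument has to stand on its own. Most of it does: your reverse direction (construct $\psi\colon W\to\pi(\Psi)$ generator by generator, with well-definedness coming from the subtree condition, the relation $(st)^{m(s,t)}$ coming from a common vertex group, and the two composites checked on generating sets) is complete, and in the forward direction your treatment of the subtree condition is also sound -- surjectivity of the natural map forces $\bigcup_i T_i=S$, and an $s$ lying in two base vertex stabilizers fixes the geodesic between them, which lies in the embedded copy of $\Psi$, so every vertex and edge group along the connecting path contains $s$ (and hence has $s$ in its generating set).

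The one genuine gap is exactly the step you defer: the claim that $\sigma_s\cap\sigma_t=\emptyset$ forces $(st)^n\ne 1$ for all $n$ is the crux of the forward direction for the edge condition, and ``somewhere on the path $s$ fails to lie in the edge group, and similarly for $t$'' is not yet an argument -- a priori the edges witnessing the two failures are different, and a normal-form computation needs a single splitting across which $s$ and $t$ alternate. The repair is short. Choose $V_i\in\sigma_s$ and $V_j\in\sigma_t$ at minimal distance in $\Psi$; then no vertex strictly between them lies in $\sigma_s\cup\sigma_t$, so the first edge $E$ of the path satisfies $s\notin\Psi(E)$ (otherwise its far endpoint, which is either an interior vertex or $V_j$, would lie in $\sigma_s$) and $t\notin\Psi(E)$ (otherwise $V_i\in\sigma_t$, contradicting disjointness). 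Cutting the tree $\Psi$ along this single edge exhibits $W\cong\pi(\Psi)$ as $P\ast_{\Psi(E)}Q$ with $s\in P\setminus\Psi(E)$ and $t\in Q\setminus\Psi(E)$, so $(st)^n$ is an alternating reduced word of length $2n$, hence nontrivial; thus $st$ has infinite order, contradicting lemma \ref{Order}, which says its order in $W$ is $m(s,t)<\infty$. Alternatively you can avoid normal forms altogether: $\langle s,t\rangle$ is finite, hence fixes a vertex $x$ of the Bass--Serre tree, and since $s$ and $t$ each fix some base vertex, they both fix the projection $p$ of $x$ to the embedded base tree; the stabilizer of the base vertex $p$ is its (special) vertex group, so some $T_k$ contains both $s$ and $t$. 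With either completion your proof is correct; it is a Bass--Serre argument where the source \cite{MTVisual} works more directly with the presentation of $\pi(\Psi)$, but the content is the same.
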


In section 4 we describe when visual graph of groups decompositions with minimal splitting edge groups are irreducible with respect to splittings over minimal splitting subgroups. The next lemma follows easily from lemma \ref{MT2} and helps make that description possible.

\begin{lemma} \label{MT3} 
Suppose $\Psi$ is a visual graph of groups decomposition for the finitely generated Coxeter system $(W,S)$, $V\subset S$ is such that $\langle V\rangle$ is a vertex group of $\Psi$ and $E\subset V$ separates $V$ in $\Gamma (W,S)$. Then $\langle V\rangle$ splits over $\langle E\rangle$, non-trivially and compatibly with $\Psi$ to give a finer visual decomposition for $(W,S)$ if and only if there are subsets $A$ and $B$ of $S$ such that $A$ is equal to $E$ union (the vertices of)  some of the components of $\Gamma -E$, $B$ is $E$ union the rest of the components of $\Gamma-E$, $A\cap V\ne E\ne B\cap V$, and for each edge $D$ of $\Psi$ which is adjacent to $V$, and $D_S\subset S$ such that $\langle D_S\rangle =\Psi(D)$, we have $D_S\subset A$ or $D_S\subset B$. The $\Psi$-compatible splitting of $\langle V\rangle$ is $\langle A\cap V\rangle\ast _{\langle E\rangle } \langle B\cap V\rangle$.
\end{lemma}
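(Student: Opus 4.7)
The plan is to prove both directions of the biconditional by translating compatibility with $\Psi$ into the stated combinatorial conditions on $A$ and $B$, using Lemma \ref{MT2} as the bridge between graph-of-groups data and the presentation diagram. Throughout I appeal to the standard Coxeter facts that a visual subgroup $\langle U\rangle$ uniquely determines $U\subset S$ and that $\langle U\rangle \subset \langle V\rangle$ with both visual implies $U\subset V$.

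For the forward direction, suppose $\langle V\rangle$ splits compatibly over $\langle E\rangle$ so that the refined decomposition $\Psi'$ is visual. Then the splitting of $\langle V\rangle$ is itself visual, of the form $\langle V_1\rangle \ast_{\langle E\rangle} \langle V_2\rangle$ with $V_1\cup V_2=V$, $V_1\cap V_2=E$, and $V_1,V_2\ne E$. For each edge $D$ of $\Psi$ adjacent to $V$, $D_S\subset V$ by the facts above, and compatibility says $\langle D_S\rangle$ lies in a vertex stabilizer of the Bass-Serre tree of $\langle V_1\rangle \ast_{\langle E\rangle} \langle V_2\rangle$; but any pair $d_1\in D_S\cap(V_1\setminus E)$ and $d_2\in D_S\cap(V_2\setminus E)$ would give $d_1d_2$ translation length $2$ on that tree, so $D_S\subset V_1$ or $D_S\subset V_2$. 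The remaining step is to show that $V_1\setminus E$ and $V_2\setminus E$ lie in different components of $\Gamma -E$, after which $A=E$ together with the components of $\Gamma -E$ meeting $V_1\setminus E$, and $B$ the union of $E$ with the remaining components, realize the required partition. Suppose for contradiction there were a path $u_0=v_1, u_1,\ldots ,u_k=v_2$ in $\Gamma -E$ with $v_1\in V_1\setminus E$ and $v_2\in V_2\setminus E$. Applying Lemma \ref{MT2} to $\Psi'$, each subtree $T_{u_i}$ of vertices/edges of $\Psi'$ with groups containing $u_i$ is connected, and each edge $u_iu_{i+1}$ of $\Gamma$ lies in some vertex group, furnishing a common vertex of $T_{u_i}$ and $T_{u_{i+1}}$. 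Hence $\bigcup_i T_{u_i}$ is connected in $\Psi'$, contains $V_1$ (since $u_0\in V_1$) and $V_2$ (since $u_k\in V_2$), and therefore contains the unique path from $V_1$ to $V_2$ in the tree $\Psi'$, which is the single new edge with group $\langle E\rangle$. Thus some $u_i\in E$, contradicting the choice of path.

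For the reverse direction, set $V_1=A\cap V$ and $V_2=B\cap V$, so $V_1\cup V_2=V$, $V_1\cap V_2=E$, and $V_1,V_2\ne E$. Since $A\setminus E$ and $B\setminus E$ are disjoint unions of components of $\Gamma -E$, no edge of $\Gamma$ joins $V_1\setminus E$ to $V_2\setminus E$, so $\langle V\rangle =\langle V_1\rangle \ast_{\langle E\rangle}\langle V_2\rangle$ visually. Compatibility is immediate: $D_S\subset V$ together with $D_S\subset A$ (say) gives $D_S\subset V_1$. That the refined decomposition remains visual follows from Lemma \ref{MT2}: every edge of $\Gamma(W,S)$ inside $V$ lies in $V_1$ or $V_2$ (never crossing between $V_1\setminus E$ and $V_2\setminus E$), and for each $s\in S$ the subtree condition is preserved, the crucial case being $s\in V_1\setminus E$, where any adjacent $\Psi$-edge $D$ with $s\in D_S$ satisfies $s\in A\setminus B$ and hence $D_S\subset A$, so $D$ reattaches to $V_1$. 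The main obstacle is the forward direction's step that $V_1\setminus E$ and $V_2\setminus E$ lie in different components of the full diagram $\Gamma -E$, rather than merely of the induced subdiagram on $V$; it is here that compatibility with the ambient $\Psi$, transported through Lemma \ref{MT2}, is essential.
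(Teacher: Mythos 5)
Your proof is correct, and it follows exactly the route the paper intends: the paper gives no written proof of this lemma, asserting only that it ``follows easily from lemma \ref{MT2}'', and your argument is a careful fleshing-out of that derivation (including the genuinely non-obvious point that $V_1\setminus E$ and $V_2\setminus E$ cannot meet a common component of the full diagram $\Gamma-E$, which you settle correctly via the subtree condition of Lemma \ref{MT2} applied to the refined decomposition). Nothing in your write-up deviates from or goes beyond what the paper's one-line justification presupposes.
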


The main theorem of \cite{MTVisual} is ``the visual decomposition theorem for finitely generated Coxeter groups":

\begin{theorem}\label{MT1} 
Suppose $(W,S)$ is a Coxeter system and $\Lambda$ is a graph of
groups decomposition of $W$. Then $W$ has a visual graph of
groups decomposition $\Psi$, where each vertex (edge) group of $\Psi$ is
a subgroup of a conjugate of a vertex (respectively edge) group of $\Lambda$.
Moreover, $\Psi$ can be taken so that each special subgroup of
$W$ that is a subgroup of a conjugate of a vertex group of
$\Lambda$ is a subgroup of a vertex group of $\Psi$.
\end{theorem}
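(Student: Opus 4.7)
The plan is to pass to the Bass--Serre tree $T$ of $\Lambda$, on which $W$ acts with vertex (respectively edge) stabilizers equal to the conjugates of vertex (respectively edge) groups of $\Lambda$. The crucial feature of Coxeter systems is that every generator $s\in S$ has order $2$ and so fixes some vertex of $T$; more generally, whenever $m(s,t)<\infty$ the dihedral group $\langle s,t\rangle$ is finite and therefore has a common fixed vertex. Writing $T_s$ for the fixed subtree of $s$, each $T_s$ is a nonempty subtree of $T$, and $T_s\cap T_t\neq\emptyset$ for every edge $\{s,t\}$ of the presentation diagram $\Gamma(W,S)$.

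Next I would build $\Psi$ directly from this data. For $v\in V(T)$, set $S_v=\{s\in S: sv=v\}$, so $\langle S_v\rangle$ is a special subgroup sitting inside the stabilizer $W_v$, which is a conjugate of some $\Lambda(V)$. I would take $\Psi$ to be (essentially) the $W$-quotient of a suitable $W$-invariant subtree of $T$, after collapsing any edge $(v,v')$ along which $S_v=S_{v'}$, and equipping surviving vertices with groups $\langle S_v\rangle$ and surviving edges with groups $\langle S_v\cap S_{v'}\rangle$. Because no nontrivial homomorphism $W\to\mathbb Z$ exists, any graph of groups decomposition of $W$ has underlying graph a tree, so the resulting quotient is automatically a tree.

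To finish, I would invoke Lemma \ref{MT2}: for each edge $\{s,t\}$ of $\Gamma(W,S)$ the nonemptiness of $T_s\cap T_t$ supplies a vertex $v$ with $\{s,t\}\subset S_v$, placing that edge in the presentation diagram of some $\Psi$-vertex group; and for each $s\in S$, the set of $\Psi$-vertices and edges whose group contains $s$ is precisely the image of the connected subtree $T_s$ and so is a nonempty subtree of $\Psi$. Lemma \ref{MT2} then yields $\pi(\Psi)\cong W$ via the inclusion maps, and the required conjugacy containments hold by construction: the vertex group $\langle S_v\rangle$ lies inside $W_v$ and the edge group $\langle S_v\cap S_{v'}\rangle$ lies inside $W_v\cap W_{v'}$. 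For the ``moreover'' clause, if a special subgroup $\langle S'\rangle$ lies in a conjugate $g\Lambda(V)g^{-1}$ then $S'\subset S_{gv}$ where $v$ is the vertex of $T$ labelled by $V$, so $\langle S'\rangle$ is contained in the vertex group of $\Psi$ indexed by the orbit of $gv$; choosing the collapsing step so as to never identify a vertex $v$ with a neighbor of strictly larger type ensures that such $S'$ always survive inside some vertex group. The principal obstacle I expect is exactly this collapsing step: one must verify simultaneously that no edge of a needed presentation diagram is destroyed, that the image of each $T_s$ remains connected, and that the surviving edge groups really sit inside conjugates of edge groups of $\Lambda$, and all three require careful combinatorial bookkeeping about how the labels $S_v$ change as $v$ moves along $T$.
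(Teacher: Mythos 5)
There is a genuine gap, and it sits exactly where the real content of Theorem \ref{MT1} lies. (Note also that this paper does not prove Theorem \ref{MT1} at all: it is imported from \cite{MTVisual}, where the proof is a substantial induction carried out with the deletion condition and a careful analysis of geodesic words acting on the Bass--Serre tree, not a soft fixed-point argument.) Your construction of $\Psi$ is not well defined as stated: the labels $S_v=\{s\in S: sv=v\}$ are not constant on $W$-orbits, so ``the $W$-quotient of a suitable $W$-invariant subtree \ldots equipping surviving vertices with groups $\langle S_v\rangle$'' forces a choice of orbit representatives (a transversal subtree), and once such a choice is made all three hypotheses you need for Lemma \ref{MT2} can fail outright rather than merely requiring bookkeeping. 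First, the fixed tree $T_s$ need not meet the chosen transversal, so the set of chosen vertices $v$ with $s\in S_v$ may be empty; second, even when it is nonempty it need not be connected; third, your identification ``the set of $\Psi$-vertices whose group contains $s$ is precisely the image of $T_s$'' is false, because $s$ may fix some vertex in the orbit of $v$ without fixing the chosen representative $v$ itself, so membership of $s$ in the label of the orbit is not detected by the image of $T_s$. The same conflation undermines the ``moreover'' clause: from $S'\subset S_{gv}$ you cannot conclude that $\langle S'\rangle$ lies in the vertex group you assigned to the orbit of $gv$, since that group is $\langle S_v\rangle$ for a different representative and may be an unrelated special subgroup.

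The observations you do make (each $s$ fixes a vertex since it has order $2$; $T_s\cap T_t\neq\emptyset$ when $m(s,t)<\infty$; by the Helly property every complete subset of $\Gamma(W,S)$ generates a subgroup fixing a vertex) are correct and are indeed ingredients of any proof, but they only show that certain special subgroups are elliptic; they do not produce a visual graph of groups whose fundamental group is all of $W$ with the required containments. Verifying the two conditions of Lemma \ref{MT2} --- that every edge of $\Gamma(W,S)$ lies in the diagram of a single vertex group of $\Psi$, and that for each $s$ the occupied vertices and edges form a nonempty subtree --- requires choosing, coherently and simultaneously for all $s\in S$, particular fixed vertices (and moving them using the deletion condition) so that the choices are compatible across the whole tree; this is precisely the hard step that your final paragraph defers to ``careful combinatorial bookkeeping,'' and without it the inclusion-induced map from the fundamental group of your candidate $\Psi$ to $W$ need not be surjective. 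As written, the proposal is a plausible plan of attack but not a proof.
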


If $(W,S)$ is a finitely generated Coxeter system, $\Lambda$ is a graph of groups decomposition of $W$ and $\Psi$ satisfies the conclusion of theorem \ref{MT1} (including the moreover clause)  and then $\Psi$ is called {\it a visual decomposition from} $\Lambda$ (see \cite{MTVisual}). In remark 1 of \cite{MTVisual}, it is shown that if $\Lambda$ is reduced  and $\Psi$ is a visual decomposition from $\Lambda$ then for  any edge $E$ of $\Lambda$ there is an edge $D$ of $\Psi$ such that $\Psi (D)$ is conjugate to a subgroup of $\Lambda (E)$.

If a group $G$ decomposes as $A\ast _CB$ and $H$ is a subgroup of
$B$, then the group $\langle A\cup H\rangle$ decomposes as $A\ast
_C\langle C\cup H\rangle$. Furthermore, $G$ decomposes as $\langle
A\cup H\rangle _{\langle C\cup H\rangle}B$, giving a somewhat
``artificial" decomposition of $G$. In \cite{MTVisual}, this  idea is used on a
certain Coxeter system $(W,S)$ to produce reduced graph of groups
decompositions of $W$ with arbitrarily large numbers of edges.

The following theorem of \cite{MTVisual} establishes limits on how
far an arbitrary graph of groups decomposition for a finitely
generated Coxeter system can stray from a visual decomposition
for that system.

\begin{theorem}\label{artificial} 
Suppose $(W,S)$ is a finitely generated Coxeter system, $\Lambda$
is a graph of groups decomposition of $W$ and $\Psi$ is a reduced
graph of groups decomposition of $W$ such that each vertex group of $\Psi$ is a subgroup of a conjugate of a vertex group of $\Lambda$. Then for each vertex
$V$ of $\Lambda$, the vertex group $\Lambda (V)$, has a graph of
groups decomposition $\Phi_V$ such that each vertex group of
$\Phi _V$ is either

(1) conjugate to a vertex group of $\Psi$ or

(2) a subgroup of $v\Lambda (E)v^{-1}$ for some $v\in \Lambda
(V)$ and $E$ some edge of $\Lambda$ adjacent to $V$.
\end{theorem}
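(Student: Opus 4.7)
The plan is to use Bass-Serre theory. Since $\Psi$ is a graph of groups decomposition of $W$, it comes with a Bass-Serre tree $T_\Psi$ on which $W$ acts with vertex stabilizers equal to conjugates of the vertex groups of $\Psi$. Restricting this action to the subgroup $\Lambda(V)$ yields, by the standard ``induced splitting'' construction, a graph of groups decomposition $\Phi_V$ of $\Lambda(V)$, whose vertex groups are precisely the $\Lambda(V)$-stabilizers of vertices in $T_\Psi$ --- intersections of the form $\Lambda(V)\cap wPw^{-1}$ where $P$ is a vertex group of $\Psi$ and $w\in W$.

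To analyze such an intersection $H=\Lambda(V)\cap wPw^{-1}$, bring in the Bass-Serre tree $T_\Lambda$ of $\Lambda$. By hypothesis, $P\subseteq u\Lambda(V')u^{-1}$ for some vertex $V'$ of $\Lambda$ and some $u\in W$, so $wPw^{-1}$ sits inside $gAg^{-1}$ with $A=\Lambda(V')$ and $g=wu$; this conjugate is the stabilizer in $W$ of a vertex $\tilde V''$ of $T_\Lambda$. Meanwhile $\Lambda(V)$ stabilizes the base vertex $\tilde V$ of $T_\Lambda$ corresponding to $V$. Two cases then split the conclusion.

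If $\tilde V=\tilde V''$, then $gAg^{-1}=\Lambda(V)$, so $wPw^{-1}\subseteq\Lambda(V)$ and the intersection equals $wPw^{-1}$, a conjugate of a vertex group of $\Psi$ --- this is case~(1). If $\tilde V\ne\tilde V''$, then any element lying in both stabilizers fixes the entire geodesic in $T_\Lambda$ between $\tilde V$ and $\tilde V''$, and in particular fixes the first edge $e_1$ of that geodesic at $\tilde V$. The edges of $T_\Lambda$ adjacent to $\tilde V$ fall into $\Lambda(V)$-orbits biject\-ively indexed by the edges $E$ of $\Lambda$ at $V$, and each orbit representative's stabilizer is $\Lambda(V)$-conjugate to $\Lambda(E)$. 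Hence $H\subseteq v\Lambda(E)v^{-1}$ for some $v\in\Lambda(V)$ and some edge $E$ of $\Lambda$ incident to $V$ --- case~(2).

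The main obstacle I anticipate is the degenerate situation in which $\Lambda(V)$ globally fixes a vertex of $T_\Psi$, so that $\Phi_V$ is trivial with sole vertex group $\Lambda(V)$ itself. One must then show directly that $\Lambda(V)$ already satisfies (1) or (2); this is where reducedness of $\Psi$ and the structure of conjugates of vertex groups in Coxeter groups become essential, since a \emph{strict} inclusion $\Lambda(V)\subsetneq wPw^{-1}$ combined with $P\subseteq u\Lambda(V')u^{-1}$ must be excluded or reinterpreted. Modulo this boundary case, the argument is a clean application of subgroups acting on Bass-Serre trees, the dichotomy flowing directly from whether the two vertices of $T_\Lambda$ above coincide.
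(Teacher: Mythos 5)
Your argument is correct, and it is the intended one: this theorem is imported from \cite{MTVisual} rather than proved in the present paper, but the decomposition you build---the splitting of $\Lambda(V)$ induced by its action on the Bass-Serre tree $T_\Psi$, with vertex groups $\Lambda(V)\cap wPw^{-1}$, analyzed by comparing the vertex of $T_\Lambda$ fixed by $\Lambda(V)$ with one fixed by $g\Lambda(V')g^{-1}\supseteq wPw^{-1}$---is exactly the $\Phi_V$ this paper works with (see the proof of theorem \ref{T1N}), and your two cases deliver precisely conclusions (1) and (2), with $v\in\Lambda(V)$ coming from the fact that the edges of $T_\Lambda$ at $\tilde V$ lying over a given edge $E$ form a single $\Lambda(V)$-orbit. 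The ``boundary case'' you flag at the end is not actually a gap: if $\Lambda(V)$ fixes a vertex of $T_\Psi$, the sole vertex group of $\Phi_V$ is $H=\Lambda(V)=\Lambda(V)\cap wPw^{-1}$, and your own dichotomy still applies---if $\tilde V=\tilde V''$ then $wPw^{-1}\subseteq\Lambda(V)\subseteq wPw^{-1}$, so $\Lambda(V)$ is conjugate to a vertex group of $\Psi$ (case (1)), while if $\tilde V\ne\tilde V''$ then $\Lambda(V)\subseteq v\Lambda(E)v^{-1}$, which is exactly what case (2) allows (a strict inclusion $\Lambda(V)\subsetneq wPw^{-1}$ need not be excluded). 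In particular neither the reducedness of $\Psi$ nor any special feature of conjugacy in Coxeter groups is needed for your argument; the hypotheses enter only through the containment $P\subseteq u\Lambda(V')u^{-1}$ and the fact that the relevant decompositions are trees.
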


When $\Psi$ is visual, vertex groups of the first type in theorem \ref{artificial} are
visual and those of the second type seem somewhat artificial. In section 4 we prove theorem \ref{T1N} which shows that if the edge groups of the decomposition $\Lambda$ in theorem \ref{artificial} are minimal splitting subgroups of $W$, then the decompositions $\Phi_V$ are compatible with $\Lambda$ and part (2) of the conclusion can be significantly enhanced.

\begin{lemma}\label{Sub}
If $\Lambda$ is a reduced graph of groups decomposition of a
group $G$, $V$ and $U$ are vertices of $\Lambda$ and $g\Lambda
(V)g^{-1}\subset\Lambda (U)$ for some $g\in G$, then $V=U$. If
additionally $\Lambda$ is a tree, then $g\in \Lambda(V)$.
$\square$
\end{lemma}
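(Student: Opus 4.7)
The natural approach is via the Bass--Serre tree $T$ associated with $\Lambda$: $G$ acts on $T$ with quotient graph $\Lambda$ and with edge (respectively vertex) stabilizers conjugate to the edge (respectively vertex) groups of $\Lambda$. I would choose lifts $\tilde V,\tilde U\in T$ so that $\mathrm{Stab}_G(\tilde V)=\Lambda(V)$ and $\mathrm{Stab}_G(\tilde U)=\Lambda(U)$. The hypothesis $g\Lambda(V)g^{-1}\subset\Lambda(U)$ is then equivalent to $\mathrm{Stab}_G(g\tilde V)\subset\mathrm{Stab}_G(\tilde U)$, so the subgroup $g\Lambda(V)g^{-1}$ fixes both $g\tilde V$ and $\tilde U$, hence it fixes the unique geodesic $\gamma$ between them in $T$.

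The heart of the argument is the following. Suppose $\gamma$ has positive length, and let $e$ be its first edge emanating from $g\tilde V$. Edge stabilizers in a Bass--Serre tree are always contained in the stabilizers of their endpoints, so $\mathrm{Stab}(e)\subset\mathrm{Stab}(g\tilde V)=g\Lambda(V)g^{-1}$. Combined with the reverse inclusion just derived, $\mathrm{Stab}(e)=g\Lambda(V)g^{-1}$. Projecting $e$ to $\Lambda$ gives an edge $E$ with endpoint $V$ whose edge-group image in $\Lambda(V)$ equals all of $\Lambda(V)$. Reducedness forbids this when $E$ has distinct endpoints, so $E$ must be a loop at $V$. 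When $\Lambda$ is a tree, this is an outright contradiction; hence $\gamma$ has length $0$, meaning $g\tilde V=\tilde U$. Since vertices of $T$ in the same $G$-orbit project to the same vertex of $\Lambda$, we obtain $V=U$; and choosing the lifts to agree ($\tilde V=\tilde U$) we read off $g\in\mathrm{Stab}(\tilde V)=\Lambda(V)$.

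For the first conclusion in the absence of the tree hypothesis, the plan is to iterate: the loop at $V$ forces the next vertex of $\gamma$ to project to $V$ as well, and repeating the argument along $\gamma$ shows that every vertex of $\gamma$, in particular $\tilde U$, lies over $V$, so $V=U$. The delicate point in this iteration is re-establishing at each successive vertex that the corresponding edge stabilizer equals the full vertex stabilizer, and not merely contains the conjugate $g\Lambda(V)g^{-1}$; this is the main technical obstacle, whereas the tree case (which is the only one actually needed in the rest of the paper, since every graph of groups decomposition of a Coxeter group has underlying graph a tree) is immediate.
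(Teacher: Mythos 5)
Your Bass--Serre argument for the tree case is correct and complete, and it is the standard argument; the paper in fact states Lemma \ref{Sub} with no proof at all, and in every place the lemma is used the underlying graph is a tree (by Lemma \ref{Z} and Corollary \ref{CZ} there is no nontrivial homomorphism of the relevant groups to $\mathbb Z$), so the case you prove is the only one the paper needs. Note also that your first-edge argument does not really use the tree hypothesis, only the absence of loops: if the first edge of $\gamma$ projects to an edge with distinct endpoints, its edge group equals the endpoint vertex group, contradicting reducedness; so both conclusions hold for any reduced graph of groups whose underlying graph has no loops.

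The genuine gap is in the iteration you sketch for the first conclusion when loops are allowed, and the obstacle you flag is fatal rather than technical: the first sentence of the lemma is actually false for reduced graphs of groups with loops, under the paper's definition of reduced (which constrains only edges between distinct vertices). Take $\Lambda$ with two vertices $V,U$, $\Lambda(V)=\langle a\rangle\cong\mathbb Z$, $\Lambda(U)=\langle b\rangle\cong\mathbb Z$, one edge joining them with edge group $\mathbb Z$ embedded as $\langle a^2\rangle$ and $\langle b^2\rangle$ (proper on both sides, so $\Lambda$ is reduced), and an ascending loop at $V$ with edge group $\langle a\rangle$ and edge maps the identity and $a\mapsto a^2$. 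Then $G=\pi(\Lambda)=\langle a,b,t\mid a^2=b^2,\ t^{-1}at=a^2\rangle$, and $t^{-1}\langle a\rangle t=\langle a^2\rangle=\langle b^2\rangle\subset\Lambda(U)$, so $g\Lambda(V)g^{-1}\subset\Lambda(U)$ with $g=t^{-1}$ although $V\ne U$. Tracing your argument in this example shows exactly where the iteration dies: the first edge of $\gamma$ is a lift of the ascending loop and its stabilizer does equal $\mathrm{Stab}(g\tilde V)$, but the next vertex is a lift of $V$ with strictly larger stabilizer, and the following edge (a lift of the edge joining $V$ to $U$) has stabilizer proper in that vertex stabilizer, so reducedness gives no contradiction and the geodesic simply ends at a vertex over $U$. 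Thus the statement cannot be proved in the generality in which it is phrased; the honest scope is reduced graphs of groups without loops (in particular trees), which your proof establishes and which covers every application in the paper.
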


If $W$ is a finitely generated Coxeter group then since $W$ has a
set of order 2 generators, there is no non-trivial homomorphism
from $W$ to $\mathbb Z$. Hence any graph of groups decomposition
of $W$ is a tree. If $C\in M(W)$ and $W$ is finitely generated,
then theorem \ref{MT1} implies that $C$ contains a subgroup of
finite index which is isomorphic to a Coxeter group and so there
is no non-trivial homomorphism of $C$ to $\mathbb Z$.

The following is an easy exercise in the theory of graph of
groups or more practically it is a direct consequence of the
exactness of the Mayer-Viatoris sequence for a pair of groups.

\begin{lemma}\label{Z} 
Suppose the group $W$ decomposes as $A\ast _CB$ and there is no
non-trivial homomorphism of $W$ or $C$ to $\mathbb Z$. Then there
is no non-trivial homomorphism of $A$ or $B$ to $\mathbb Z$.
$\square$
\end{lemma}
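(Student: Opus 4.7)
The plan is to prove the contrapositive: assume that $A$ (or symmetrically $B$) admits a non-trivial homomorphism $\phi:A\to\mathbb Z$, and show that then either $C$ or $W$ must admit a non-trivial homomorphism to $\mathbb Z$. Since the hypothesis rules out both, no such $\phi$ can exist. The argument relies only on the universal property of the amalgamated product $W = A\ast_C B$, which says that homomorphisms $W\to G$ correspond bijectively to pairs of homomorphisms $A\to G$ and $B\to G$ that agree on $C$.

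The key dichotomy is to examine the restriction $\phi|_C:C\to\mathbb Z$. First I would observe that if $\phi|_C$ is non-trivial, then $C$ itself admits a non-trivial homomorphism to $\mathbb Z$, immediately contradicting the hypothesis. So we may assume $\phi|_C = 0$. In that case, the pair consisting of $\phi:A\to\mathbb Z$ and the zero map $B\to\mathbb Z$ agree on $C$ (both send $C$ to $0$), so by the universal property they determine a homomorphism $\psi:W\to\mathbb Z$ with $\psi|_A = \phi$ and $\psi|_B = 0$. Since $\phi$ is non-trivial, $\psi$ is non-trivial, contradicting the hypothesis on $W$. The same argument with $A$ and $B$ swapped handles the other half of the conclusion.

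I do not expect a genuine obstacle here; the alternative route via the Mayer--Vietoris long exact sequence
\[
H^0(C;\mathbb Z)\to H^1(W;\mathbb Z)\to H^1(A;\mathbb Z)\oplus H^1(B;\mathbb Z)\to H^1(C;\mathbb Z)
\]
together with the identification $H^1(G;\mathbb Z)=\mathrm{Hom}(G,\mathbb Z)$ gives the same conclusion, but requires more machinery than the direct universal-property argument. The only mild care needed is in verifying that ``agree on $C$'' really holds for the pair $(\phi,0)$ under the hypothesis $\phi|_C=0$, which is immediate.
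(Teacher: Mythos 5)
Your argument is correct, and it matches what the paper intends: the paper gives no written proof, stating only that the lemma is ``an easy exercise in the theory of graph of groups or ... a direct consequence of the exactness of the Mayer--Vietoris sequence,'' and your universal-property dichotomy (either $\phi|_C\neq 0$, contradicting the hypothesis on $C$, or $\phi|_C=0$, so $(\phi,0)$ glues to a non-trivial homomorphism $W\to\mathbb Z$) is exactly the elementary route being alluded to. Your aside about the Mayer--Vietoris alternative is precisely the paper's other suggested proof, so nothing is missing.
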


\begin{corollary}\label{CZ} 
Suppose $W$ is a finitely generated Coxeter group and $\Lambda$
is a graph of groups decomposition of $W$ with each edge group in
$M(W)$, then any graph of groups decomposition of a vertex group
of $\Lambda$ is a tree. $\square$
\end{corollary}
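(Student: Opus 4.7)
The plan is to show that every vertex group $\Lambda(V)$ admits no non-trivial homomorphism to $\mathbb{Z}$, since the remark preceding lemma \ref{Z} notes this is precisely the condition that rules out loops in any graph of groups decomposition. Two ``no $\mathbb Z$-quotient'' inputs are immediately available. The ambient group $W$ admits no such homomorphism because it is generated by elements of order $2$. Each edge group $\Lambda(E)$ lies in $M(W)$, and by the paragraph before lemma \ref{Z} (via theorem \ref{MT1} applied to a splitting of $W$ over $\Lambda(E)$), $\Lambda(E)$ contains a finite-index Coxeter subgroup and hence admits no non-trivial homomorphism to $\mathbb{Z}$ either.

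Since $W$ has no homomorphism to $\mathbb{Z}$, the underlying graph of $\Lambda$ is a tree. I would order its vertices $V_1,V_2,\dots,V_m$ so that, for each $i\ge 2$, $V_i$ is joined by a unique edge $E_i$ to the subtree spanned by $V_1,\dots,V_{i-1}$. Writing $W_i$ for the fundamental group of the subtree of groups on $V_1,\dots,V_i$, we get $W_1=\Lambda(V_1)$, $W_m=W$, and an iterated amalgamation
\[
W_i \;=\; W_{i-1}\ast_{\Lambda(E_i)}\Lambda(V_i)\qquad (i\ge 2).
\]

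I then run a downward induction on $i$, starting from $W_m=W$, which has no non-trivial homomorphism to $\mathbb{Z}$. Assume $W_i$ has no non-trivial homomorphism to $\mathbb{Z}$. Since $\Lambda(E_i)\in M(W)$ also has no such homomorphism, lemma \ref{Z} applied to the splitting $W_i=W_{i-1}\ast_{\Lambda(E_i)}\Lambda(V_i)$ gives the same conclusion for both $W_{i-1}$ and $\Lambda(V_i)$. Descending from $i=m$ to $i=2$ yields the desired property for every $\Lambda(V_i)$ with $i\ge 2$, and for $i=1$ it follows from the last inductive step since $\Lambda(V_1)=W_1$. Consequently, every vertex group of $\Lambda$ admits no non-trivial homomorphism to $\mathbb{Z}$, so any graph of groups decomposition of such a vertex group is a tree.

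There is essentially no obstacle here: the corollary is a clean propagation of the ``no $\mathbb{Z}$-quotient'' property along the tree $\Lambda$ via lemma \ref{Z}. The only point requiring care is verifying that edge groups in $M(W)$ share this property, which the excerpt has already recorded.
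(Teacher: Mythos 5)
Your proposal is correct and is exactly the argument the paper intends: the corollary is left as an immediate consequence of the facts that $W$ and each $M(W)$ edge group admit no non-trivial homomorphism to $\mathbb Z$, propagated to the vertex groups by repeated use of lemma \ref{Z} along the tree underlying $\Lambda$. Your explicit vertex ordering and downward induction merely spell out this propagation, so no further comparison is needed.
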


\section{Preliminary results}

We list some results used in this paper. Most can be found in
\cite{Bourbaki}.

\begin{lemma} \label{Order} 
Suppose $(W,S)$ is a Coxeter system and $P=\langle
S:(st)^{m(s,t)}$ for $m(s,t)<\infty\rangle$ (where $m:S^2\to
\{1,2,\ldots ,\infty\}$ ) is a Coxeter presentation for $W$. If
$A$ is a subset of $S$, then $(\langle A\rangle, A)$ is a Coxeter
system with Coxeter presentation $\langle A:(st)^{m'(s,t)}$ for
$m'(s,t) <\infty \rangle$ (where $m'=m\vert _{A^2}$). In
particular, if $\{s,t\}\subset S$, then the order of $(st)$ is
$m(s,t)$. $\square$
\end{lemma}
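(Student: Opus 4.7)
The plan is to invoke the Tits (canonical) linear representation of a Coxeter group. Let $V=\bigoplus_{s\in S}\mathbb{R}e_s$ carry the symmetric bilinear form $B$ defined by $B(e_s,e_t)=-\cos(\pi/m(s,t))$, with the convention $-\cos(\pi/\infty)=-1$. For each $s\in S$ define the reflection $\sigma_s(v)=v-2B(e_s,v)e_s$. A direct two-dimensional computation on $\mathbb{R}e_s+\mathbb{R}e_t$ shows that $\sigma_s\sigma_t$ has order exactly $m(s,t)$, so the assignment $s\mapsto\sigma_s$ extends to a homomorphism $\sigma:W\to GL(V)$. The foundational fact (Bourbaki, Chap.~V, \S 4, Thm.~1) is that $\sigma$ is faithful and that $st$ has order exactly $m(s,t)$ in $W$; this immediately yields the ``in particular'' clause.

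For the main statement, fix $A\subset S$ and let $W_A^{\mathrm{abs}}$ denote the abstract group with presentation $\langle A:(st)^{m'(s,t)}\text{ for }m'(s,t)<\infty\rangle$, where $m'=m|_{A^2}$. Since the defining relations of $W_A^{\mathrm{abs}}$ all hold in $W$ (by the previous paragraph), the universal property of the presentation produces a homomorphism $\phi:W_A^{\mathrm{abs}}\to W$ whose image is the special subgroup $\langle A\rangle$. It remains to show $\phi$ is injective.

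To do this, observe that the subspace $V_A=\bigoplus_{s\in A}\mathbb{R}e_s$ is invariant under $\sigma_s$ for every $s\in A$, because $\sigma_s(e_t)=e_t-2B(e_s,e_t)e_s\in V_A$ whenever $t\in A$. Moreover, the restriction of $B$ to $V_A$ together with the reflections $\sigma_s|_{V_A}$ ($s\in A$) is literally the canonical data attached to the abstract system $(W_A^{\mathrm{abs}},A)$. Applying the Bourbaki faithfulness theorem \emph{to $(W_A^{\mathrm{abs}},A)$} shows that the composition $W_A^{\mathrm{abs}}\stackrel{\phi}{\to}\langle A\rangle\stackrel{\sigma}{\to}GL(V_A)$ is injective. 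Hence $\phi$ itself is injective, so $\phi$ is an isomorphism and $(\langle A\rangle,A)$ is the asserted Coxeter system.

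The main obstacle is the faithfulness statement for the Tits representation, which is the genuinely non-trivial ingredient; everything else is routine bookkeeping about the restriction of $B$ and $\sigma$ to the $A$-coordinate subspace. Outside of citing Bourbaki, faithfulness is established by constructing the Tits cone, exhibiting a fundamental chamber, and showing that $W$ acts simply transitively on the chambers, but for the purposes of this paper the cited reference suffices.
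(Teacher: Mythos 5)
Your argument is correct: restricting the canonical bilinear form and the reflections $\sigma_s$ ($s\in A$) to the coordinate subspace $V_A$ does reproduce exactly the Tits representation of the abstract system $(W_A^{\mathrm{abs}},A)$ with matrix $m'=m\vert_{A^2}$, and faithfulness of that representation forces injectivity of the natural map $\phi:W_A^{\mathrm{abs}}\to\langle A\rangle$; the ``in particular'' clause follows from the two-dimensional computation (and for $m(s,t)<\infty$ the relation in $W$ already bounds the order above, so faithfulness is not even needed there). The paper, however, offers no argument at all for this lemma --- it is stated with a box and attributed to Bourbaki, and the proof in the cited source (Ch.~IV, \S 1, no.~8, Th\'eor\`eme 2) is combinatorial: one shows via the exchange/deletion condition that a reduced $S$-word representing an element of $\langle A\rangle$ uses only letters of $A$, whence the relations of $\langle A\rangle$ are consequences of those inherited from $m'$. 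So your route is genuinely different. What the geometric route buys is brevity and a clean reduction to a single black box (faithfulness of the canonical representation); what the combinatorial route buys is the stronger word-level information --- $\mathrm{lett}(w)\subset A$ for $w\in\langle A\rangle$, geodesics in $(\langle A\rangle,A)$ are geodesics in $(W,S)$ --- which is precisely the form in which this lemma gets used throughout \S 3 of the paper (e.g.\ alongside Lemma \ref{Del} and the $lett(\cdot)$ formalism), and which your argument does not directly recover.
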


The following result is due to Tits:

\begin{lemma} \label{Tits} 
Suppose $(W,S)$ is a Coxeter system and $F$ is a finite subgroup
of $W$ then there is $A\subset S$ such that $\langle A\rangle$ is
finite and some conjugate of $F$ is a subgroup of $\langle
A\rangle$. $\square$
\end{lemma}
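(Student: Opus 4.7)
The plan is to prove Lemma \ref{Tits} by exploiting the canonical linear (Tits) representation of $W$ and averaging orbits of $F$ in the Tits cone, a standard technique from Bourbaki. Let $V$ be the real vector space with basis $\{e_s : s\in S\}$ equipped with the symmetric bilinear form $B(e_s,e_t)=-\cos(\pi/m(s,t))$ (with the convention $-1$ when $m(s,t)=\infty$). Each generator $s\in S$ acts on $V$ as the reflection $v\mapsto v-2B(e_s,v)e_s$, and this extends to a faithful representation of $W$ on $V$. Passing to the dual $V^*$, one has the fundamental chamber $C=\{f\in V^*:\langle f,e_s\rangle>0 \text{ for all } s\in S\}$ and the Tits cone $U=\bigcup_{w\in W}w\cdot \overline{C}$, which is convex and $W$-invariant.

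First I would recall the two facts from Bourbaki that do the heavy lifting. (i) For any point $x\in\overline{C}$, the stabilizer $W_x$ coincides with the special subgroup $\langle A_x\rangle$ where $A_x=\{s\in S : s\cdot x=x\}$, and this special subgroup is finite. (ii) $\overline{C}$ is a strict fundamental domain for the action of $W$ on $U$, so every $W$-orbit in $U$ meets $\overline{C}$ in exactly one point. Assuming these, the whole argument reduces to producing a point of $U$ fixed by $F$.

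To produce such a point, pick any $y$ in the (nonempty) interior of $C$, so $y\in U$. Form the orbit average
\[
\bar y \;=\; \frac{1}{|F|}\sum_{f\in F} f\cdot y.
\]
Since $U$ is a convex subset of $V^*$ and each $f\cdot y\in U$, we have $\bar y\in U$. By construction $F$ permutes the summands, so $F$ fixes $\bar y$. Applying (ii), choose $w\in W$ with $w\cdot\bar y\in\overline{C}$; then the conjugate subgroup $wFw^{-1}$ fixes $w\cdot\bar y$, and by (i) lies in the finite special subgroup $\langle A\rangle$ with $A=A_{w\cdot\bar y}\subset S$. Equivalently $F\subset w^{-1}\langle A\rangle w$, proving the lemma.

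The potential obstacle is not in the averaging step, which is routine, but in the appeal to (i): one must be careful that the argument for finiteness of $W_x$ for $x\in\overline{C}$ is genuinely available — this is the content of the classical fact (Bourbaki Ch.~V, \S4) that the Tits cone consists exactly of points with finite isotropy among the orbit of the fundamental chamber, together with the identification of $W_x$ with the special subgroup generated by the walls through $x$. Given those references, no further work is needed.
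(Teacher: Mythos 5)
The paper does not prove this lemma at all; it is quoted as a classical result of Tits (with reference to Bourbaki), so your proposal can only be measured against the standard argument, which is indeed the averaging argument in the Tits cone that you outline. However, as written your proof rests on a false statement. Your fact (i) claims that for every $x\in\overline{C}$ the stabilizer $W_x=\langle A_x\rangle$ is finite; this is wrong whenever $W$ is infinite, since $0\in\overline{C}$ and its stabilizer is all of $W$ (more generally, $\langle A_x\rangle$ can be any special subgroup, finite or not). Your closing remark repeats the error in a different form: it is not true that ``the Tits cone consists exactly of points with finite isotropy'' --- again $0$ lies in the Tits cone $U=\bigcup_w w\overline{C}$ with infinite isotropy. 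The correct classical statement is that the \emph{interior} of $U$ is precisely the set of points with finite stabilizer. So the step ``by (i) the conjugate of $F$ lies in the finite special subgroup $\langle A\rangle$'' is not justified by the facts you have stated: fixing a point of $\overline{C}$ alone gives containment in a special subgroup, but not finiteness of that subgroup.

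The repair is short and is the standard one, so your overall strategy is sound. Choose $y$ in the open chamber $C$; then $y$ lies in the interior $U^{0}$ of the Tits cone. Since $U$ is convex and $W$ acts by linear automorphisms, $U^{0}$ is convex and $W$-invariant, hence the average $\bar y=\frac{1}{|F|}\sum_{f\in F}f\cdot y$ lies in $U^{0}$ and is fixed by $F$. Taking $w$ with $w\cdot\bar y\in\overline{C}$, the point $w\cdot\bar y$ lies in $\overline{C}\cap U^{0}$, and by the interior characterization its stabilizer $\langle A\rangle$, $A=A_{w\cdot\bar y}\subset S$, is finite; since $wFw^{-1}$ fixes $w\cdot\bar y$, it is contained in $\langle A\rangle$. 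With fact (i) replaced by (a) stabilizers of points of $\overline{C}$ are the special subgroups $\langle A_x\rangle$ and (b) points of the interior of the Tits cone have finite stabilizer, your argument is complete and agrees with the proof in the literature that the paper is citing.
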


If $A$ is a set of generators for a group $G$, the {\it Cayley
graph} ${\cal K}(G,A)$ of $G$ with respect to $A$ has $G$ as
vertex set and a directed edge labeled $a$ from $g\in G$ to $ga$
for each $a\in A$. The group $G$ acts on the left of $\cal K$.
Given a vertex $g$ in $\cal K$, the edge paths in $\cal K$ at $g$
are in 1-1 correspondence with the words in the letters $A^{\pm
1}$ where the letter $a^{-1}$ is used if an edge labeled $a$ is
traversed opposite its orientation. Note that for a Coxeter
system $(W,S)$, and $s\in S$, $s=s^{-1}$. It is standard to
identify the edges labeled $s$ at $x$ and $s$ at $xs$ in ${\cal
K}(W,S)$ for each vertex $x$, of $\cal K$ and each $s\in S$ and to
ignore the orientation on the edges. Given a group $G$ with
generators $A$, an $A$-{\it geodesic} for $g\in G$ is a shortest
word in the letters $A^{\pm 1}$ whose product is $g$. A geodesic
for $G$ defines a geodesic in $\cal K$ for each vertex $g\in G$.
Cayley graphs provide and excellent geometric setting for many of
the results in this section.

The next result is called the deletion condition for Coxeter
groups. An elementary proof of this fact, based on Dehn diagrams,
can be found in \cite{MTVisual}.

\begin{lemma}\label{Del}
{\bf The Deletion Condition} Suppose $(W,S)$ is a Coxeter system
and $a_1\cdots a_n$ is a word in $S$ which is not geodesic. Then
for some $i<j$, $a_i\cdots a_j=a_{i+1}\cdots a_{j-1}$. I.e. the
letters $a_i$ and $a_j$ can be deleted. $\square$
\end{lemma}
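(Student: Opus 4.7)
My plan is to reduce the deletion condition to the exchange condition and then prove exchange using Tits's geometric representation. Exchange states: if $s_{1}\cdots s_{k-1}$ is a geodesic word and $\ell(s_{1}\cdots s_{k-1}\cdot s)<k-1$ for some $s\in S$, then there is an index $i\leq k-1$ with $s_{1}\cdots s_{k-1}\cdot s = s_{1}\cdots\hat s_{i}\cdots s_{k-1}$. Granting exchange, here is the reduction. Let $a_{1}\cdots a_{n}$ be non-geodesic and choose the smallest $k$ with $a_{1}\cdots a_{k}$ non-geodesic; then $k\geq 2$, the prefix $a_{1}\cdots a_{k-1}$ is geodesic of length $k-1$, and since multiplying by one generator changes the length by exactly $\pm 1$ we must have $\ell(a_{1}\cdots a_{k})=k-2$. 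Applying exchange to $a_{1}\cdots a_{k-1}$ and the letter $a_{k}$ produces $i<k$ with $a_{1}\cdots a_{k-1}a_{k}=a_{1}\cdots\hat a_{i}\cdots a_{k-1}$; cancelling the common left prefix $a_{1}\cdots a_{i-1}$ yields $a_{i}\cdots a_{k}=a_{i+1}\cdots a_{k-1}$, which is the asserted deletion (with $j=k$).

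For the exchange condition I would use Tits's geometric representation. Let $V$ be the real vector space with basis $\{e_{s}:s\in S\}$, carrying the symmetric bilinear form $B(e_{s},e_{t})=-\cos(\pi/m(s,t))$ (with the convention that $\cos(\pi/\infty)=1$). Each $s\in S$ acts by the reflection $\sigma_{s}(v)=v-2B(e_{s},v)e_{s}$; a direct computation shows these reflections satisfy the Coxeter braid relations, so they extend to a representation $\sigma:W\to GL(V)$. Call a vector $\alpha\in W\cdot\{e_{s}:s\in S\}$ a \emph{positive} (resp.\ \emph{negative}) root if its coordinates in the basis $\{e_{s}\}$ are all nonnegative (resp.\ nonpositive). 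The key lemma, proved by induction on the length of a shortest word representing $w$, says: for any reduced expression $w=s_{i_{1}}\cdots s_{i_{k}}$, the roots $\beta_{j}:=\sigma(s_{i_{k}}\cdots s_{i_{j+1}})(e_{i_{j}})$ are distinct positive roots all sent to negative roots by $\sigma(w)$, and every such positive-to-negative root is of this form. In particular the number of positive roots sent to negative by $\sigma(w)$ equals the length of $w$, so the length function on $W$ behaves as expected and $\sigma$ is faithful. Now if $\ell(ws)<\ell(w)$ then $\sigma(w)(e_{s})$ is negative, hence $e_{s}=\beta_{j}$ for some $j$; rewriting this equality and multiplying by $\sigma(s_{i_{j}})$ produces $s_{i_{j}}\cdots s_{i_{k}}\cdot s=s_{i_{j+1}}\cdots s_{i_{k}}$ in $W$, whence the exchange identity $ws=s_{i_{1}}\cdots\hat s_{i_{j}}\cdots s_{i_{k}}$.

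The main obstacle is the faithfulness of the Tits representation, which is the content of the inductive claim about positive and negative roots; everything hinges on showing that $\sigma(w)$ actually sends exactly $\ell(w)$ positive roots to negative ones, so the numerical length in $W$ agrees with the length of a minimal word. Once this is in hand, both exchange and deletion fall out combinatorially as indicated. An alternative route, the one followed in \cite{MTVisual} and consistent with the paper's stated preferences, is to start from a minimal-area van Kampen (Dehn) diagram for the equality $a_{1}\cdots a_{n}=w$ with $w$ geodesic in the Coxeter presentation, and to perform surgery on its $2m(s,t)$-gonal $2$-cells to extract the pair $i<j$ with $a_{i}\cdots a_{j}=a_{i+1}\cdots a_{j-1}$; this replaces the linear algebra above with planar diagrammatic combinatorics but accomplishes the same classical content.
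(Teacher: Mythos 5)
Your argument is correct, but it is a genuinely different route from the one the paper relies on: the paper does not prove Lemma \ref{Del} at all, it quotes it as classical and points to an elementary proof via Dehn (van Kampen) diagrams in \cite{MTVisual}, whereas you reduce deletion to the exchange condition and prove exchange through Tits's geometric representation and the positive/negative root dichotomy, i.e.\ the classical Bourbaki route \cite{Bourbaki}. Your reduction step is complete and accurate: choosing the least $k$ with $a_1\cdots a_k$ non-geodesic, using the parity of the length function to get $\ell(a_1\cdots a_k)=k-2$, applying exchange to the geodesic prefix, and cancelling $a_1\cdots a_{i-1}$ does yield $a_i\cdots a_k=a_{i+1}\cdots a_{k-1}$ with $j=k$. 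The exchange argument is the standard one, and you correctly isolate where the real work lies — the inductive lemma that $\ell(ws)>\ell(w)$ forces $\sigma(w)(e_s)$ positive, which gives both the root-counting formula for length and faithfulness; that lemma is asserted rather than carried out, but it is textbook material and your indication of the induction is the right one. The trade-off between the two approaches: the root-system proof buys extra structural information (faithfulness of $\sigma$, the identification of length with the number of positive roots sent negative, the root/reflection correspondence used to convert $e_s=\beta_j$ into the exchange identity), while the Dehn-diagram proof cited by the paper stays entirely inside elementary planar combinatorics of the Coxeter presentation, which is why the authors call it elementary and prefer it as the reference; your closing remark correctly recognizes this alternative, and either route legitimately establishes the lemma as used in the paper.
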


The next collection of lemmas can be derived from the deletion
condition.

\begin{lemma} \label{Double}
Suppose $(W,S)$ is a Coxeter system and $A$ and $B$ are subsets
of $S$. Then for any $w\in W$ there is a unique shortest element,
$d$, of the double coset $\langle A\rangle w\langle B\rangle$. If
$\delta$ is a geodesic for $d$, $\alpha$ is an $A$-geodesic, and
$\beta$ is a $B$-geodesic, then $(\alpha, \delta)$ and $(\delta,
\beta)$ are geodesic. $\square$
\end{lemma}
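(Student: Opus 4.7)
The plan is to use the deletion condition (Lemma \ref{Del}) as the principal tool throughout. A shortest element $d$ of $\langle A\rangle w\langle B\rangle$ exists by well-ordering of lengths. I first prove the two-piece geodesic claim: for any $a\in\langle A\rangle$ with $A$-geodesic $\alpha$ and any geodesic $\delta$ for $d$, the concatenation $(\alpha,\delta)$ is geodesic. Suppose not; Lemma \ref{Del} lets us delete two letters from $(\alpha,\delta)$ to produce an equal word of shorter length. Since $\alpha$ and $\delta$ are individually reduced, the two deleted letters cannot both come from a single piece, so one letter lies in $\alpha$ and one in $\delta$; writing $\alpha'$ and $\delta'$ for the shortened words, they satisfy $\alpha'\delta'=\alpha\delta$ as elements. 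Letting $a'\in\langle A\rangle$ be the element of $\alpha'$, the element $d''$ of $\delta'$ then equals $(a')^{-1}ad\in\langle A\rangle d\subseteq\langle A\rangle w\langle B\rangle$ and has length at most $|d|-1$, contradicting minimality of $d$. The symmetric argument shows $(\delta,\beta)$ is geodesic.

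For uniqueness of $d$, suppose $d'$ is another minimum-length element in the double coset, so $d'=adb$ with $a\in\langle A\rangle$, $b\in\langle B\rangle$; I will show $a=b=1$. Consider the three-piece concatenation $(\alpha,\delta,\beta)$, of length $|a|+|d|+|b|$ and representing $d'$. If it is reduced, then $|d'|=|a|+|d|+|b|=|d|$ forces $a=b=1$. Otherwise, apply Lemma \ref{Del}: cases where both deletions lie in one piece are excluded by the reducedness of the individual pieces; the cases ``one in $\alpha$, one in $\delta$'' and ``one in $\delta$, one in $\beta$'' would produce words strictly shorter than $|a|+|d|$ or $|d|+|b|$ for $ad$ or $db$, contradicting the two-piece claims just established. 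So one deletion must lie in $\alpha$ and one in $\beta$, yielding a new factorization $d'=a''db''$ with $|a''|\le|a|-1$ and $|b''|\le|b|-1$.

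What I expect to be the only subtlety is iterating this reduction to closure. Choose fresh $A$- and $B$-geodesics for $a''$ and $b''$ and repeat the case analysis. Each non-terminal iteration strictly decreases $|a|+|b|$ by at least $2$, so after finitely many steps the three-piece concatenation for some factorization $d'=a^*db^*$ must be reduced; at that point $|d'|=|a^*|+|d|+|b^*|$, and since $|d'|=|d|$ this forces $a^*=b^*=1$, giving $d'=d$ and proving uniqueness.
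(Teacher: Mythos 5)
Your proof is correct: the two concatenation claims are established cleanly from minimality of $d$ via the deletion condition, and the iterative reduction of a factorization $d'=adb$ (noting the deleted pair can only straddle the $A$- and $B$-geodesics, since the other cases would contradict the already-proved two-piece claims) terminates in a reduced three-piece word, forcing $d'=d$. The paper offers no proof of this lemma, simply listing it among the standard facts derivable from the deletion condition, and your argument is exactly such a derivation, so there is nothing to reconcile beyond the harmless slip that "I will show $a=b=1$" should really read "I will show $d'=d$" (the intermediate factorizations change along the iteration).
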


\begin{lemma} \label{Kil} 
Suppose $(W,S)$ is a Coxeter system, $w\in W$, $I$ and $J\subset
S$, and $d$ is the minimal length double coset representative in
$\langle I\rangle w\langle J\rangle$. Then $\langle I\rangle\cap
d\langle J\rangle d^{-1}=\langle K\rangle$ for $K=I\cap
(dJd^{-1})$ and, $d^{-1}\langle K\rangle d=\langle J\rangle \cap
(d^{-1}\langle I\rangle d)=\langle K'\rangle$ for $K'=J\cap
d^{-1}Id=d^{-1}Kd$. In particular, if $w=idj$ for $i\in \langle
I\rangle$ and $j\in \langle J\rangle$ then $\langle I\rangle \cap
w\langle J \rangle w^{-1}=i\langle K\rangle i^{-1}$ and $\langle
J\rangle \cap w^{-1}\langle I\rangle w=j^{-1}\langle K'\rangle
j$. $\square$
\end{lemma}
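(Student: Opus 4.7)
The plan is to prove the first identity $\langle I\rangle\cap d\langle J\rangle d^{-1}=\langle K\rangle$ by double containment, the nontrivial direction going by induction on $\ell(x)$ for $x$ in the intersection. The second identity will then follow by applying the first with the roles of $I$, $J$, and $d$ exchanged and $d^{-1}$ replacing $d$, which is legitimate because inversion preserves length, so $d^{-1}$ is the minimum length representative of $\langle J\rangle d^{-1}\langle I\rangle$. The ``in particular'' statements reduce directly to the first identity by conjugation, using $i\langle I\rangle i^{-1}=\langle I\rangle$ and its analogue for $j$.

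The forward containment $\langle K\rangle\subseteq\langle I\rangle\cap d\langle J\rangle d^{-1}$ is immediate: each $s\in K$ lies in $I$ and has the form $dtd^{-1}$ for some $t\in J$, hence also in $d\langle J\rangle d^{-1}$. For the reverse containment, take $x$ in the intersection, set $y=d^{-1}xd\in\langle J\rangle$ so that $xd=dy$, and choose a reduced $I$-word $\alpha=a_1\cdots a_k$ for $x$, a reduced word $\delta$ for $d$, and a reduced $J$-word $\beta$ for $y$. By Lemma \ref{Double}, $(\alpha,\delta)$ and $(\delta,\beta)$ are both reduced expressions for $xd=dy$, and $(a_1,\delta)$ is reduced. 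Since $\ell(a_1xd)=\ell(xd)-1$, the exchange condition (a standard consequence of Lemma \ref{Del}) applied to the reduced word $(\delta,\beta)$ produces $a_1xd$ by deleting exactly one letter. A deletion inside $\delta$ would force $\ell(a_1d)\leq\ell(d)-1$, contradicting the fact that $(a_1,\delta)$ is reduced, so the deletion must occur inside $\beta$, yielding $a_1d=dj$ for some $j\in\langle J\rangle$. A final appeal to Lemma \ref{Double} gives $\ell(dj)=\ell(d)+\ell(j)$, forcing $\ell(j)=1$, so $j\in J$ and $a_1=djd^{-1}\in dJd^{-1}\cap I=K$. Then $a_1x$ still lies in $\langle I\rangle\cap d\langle J\rangle d^{-1}$ and has length $k-1$, so induction yields $a_1x\in\langle K\rangle$, and hence $x\in\langle K\rangle$.

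The main obstacle is the reverse containment, and within it the critical point is confirming that $j$ is a single generator rather than an arbitrary element of $\langle J\rangle$. This is controlled entirely by the length additivity from Lemma \ref{Double} across a minimum length double coset representative, invoked three times in sequence: once each for $(\alpha,\delta)$ and $(\delta,\beta)$ to rule out a $\delta$-deletion, and once more for $(\delta,\text{reduced expression for }j)$ to pin $\ell(j)=1$. Once the first identity is in hand, the remaining assertions are mechanical: conjugating by $d^{-1}$ yields $d^{-1}\langle K\rangle d=\langle J\rangle\cap d^{-1}\langle I\rangle d$, the symmetric application of the first identity identifies this with $\langle K'\rangle$ for $K'=J\cap d^{-1}Id$, and the equality $K'=d^{-1}Kd$ is a direct set calculation; the ``in particular'' statements come from substituting $w=idj$ into $w\langle J\rangle w^{-1}=id\langle J\rangle d^{-1}i^{-1}$ and pulling $i$ (respectively $j^{\pm 1}$) out of the intersection.
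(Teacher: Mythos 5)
Your proof is correct: the paper states Lemma \ref{Kil} without proof, as one of the facts ``derived from the deletion condition,'' and your argument is precisely the standard derivation (the classical Kilmoyer-type result) that is being invoked, with the length-additivity of Lemma \ref{Double} ruling out a deletion in $\delta$ and pinning $\ell(j)=1$, and the exchange condition doing the rest. The reduction of the second identity to the first via $d^{-1}$ being the shortest element of $\langle J\rangle w^{-1}\langle I\rangle$, and of the ``in particular'' clause via conjugation by $i$ and $j$, are exactly as intended.
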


\begin{lemma}\label{Back}
Suppose $(W,S)$ is a Coxeter system, $A$ is a subset of $S$ and
$\alpha$ is an $S$-geodesic. If for each letter $a\in A$, the word
$(\alpha ,a)$ is not geodesic, then the group $\langle A\rangle$
is finite. $\square$
\end{lemma}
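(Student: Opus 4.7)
My plan is to translate the geodesic hypothesis on $w$ (the element represented by $\alpha$) into a descent condition inside the Coxeter subsystem $(\langle A\rangle, A)$, and then apply the classical characterization of finite Coxeter systems by the existence of a longest element.

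Let $w$ denote the element represented by $\alpha$, so $\ell(w)=|\alpha|$ and the hypothesis reads $\ell(wa)<\ell(w)$ for every $a\in A$. The first step is to apply Lemma \ref{Double} to the right coset $w\langle A\rangle$, setting the first subset in that lemma to $\emptyset$. This produces a unique minimum-length representative $d$ and a factorization $w=db_0$ with $b_0\in\langle A\rangle$; moreover, for any $b\in\langle A\rangle$ with $A$-geodesic $\beta$, the concatenation $(\delta,\beta)$ of a geodesic $\delta$ for $d$ with $\beta$ is an $S$-geodesic, so $\ell(db)=\ell(d)+\ell(b)$. Taking $b=b_0$ gives $\ell(w)=\ell(d)+\ell(b_0)$, and taking $b=b_0a$ gives $\ell(wa)=\ell(d)+\ell(b_0a)$.

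Subtracting, the hypothesis $\ell(wa)<\ell(w)$ becomes $\ell(b_0a)<\ell(b_0)$ for every $a\in A$; by Lemma \ref{Order}, $(\langle A\rangle,A)$ is itself a Coxeter system, and the $S$-length and $A$-length agree on $\langle A\rangle$ (a reduced $A$-expression cannot be non-geodesic in $W$, by the deletion condition). Thus, inside $(\langle A\rangle,A)$, the single element $b_0$ has every generator of $A$ as a right descent. The last ingredient is the classical fact (Bourbaki, Ch.~IV, \S1, no.~8) that a Coxeter system is finite if and only if some element has every generator as a right descent --- equivalently, if and only if it admits a longest element. Applying this to $(\langle A\rangle,A)$ with witness $b_0$ yields $|\langle A\rangle|<\infty$, as required.

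The only genuine obstacle in this outline is the Bourbaki characterization, which is not explicitly listed among the preliminary lemmas of \S3. If a self-contained argument were preferred, the cleanest route is via the geometric representation of $(\langle A\rangle,A)$: such a $b_0$ must send every positive root to a negative root, which forces $b_0^2=1$ and pins $|\langle A\rangle|$ to the finite number of positive roots. A combinatorial proof using only Lemma \ref{Double} and the exchange condition is possible by induction on $|A|$, but it is noticeably messier because the inductive step must control not merely the rank-$(|A|-1)$ parabolic but also how the extra generator attaches, and it adds little beyond what the geometric argument already provides.
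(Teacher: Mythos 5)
Your argument is correct. The paper gives no proof of this lemma --- it is stated with a box as one of the standard facts ``derived from the deletion condition'' --- and what you have written is precisely the standard derivation it implicitly appeals to: factor $w=db_0$ through the minimal coset representative of $w\langle A\rangle$ via Lemma \ref{Double} so that the descent hypothesis transfers to $b_0$ inside the parabolic system $(\langle A\rangle,A)$ (with $S$-length and $A$-length agreeing by the deletion condition), then invoke the classical fact that a Coxeter system containing an element with every generator as a right descent is finite. The only quibble is the exact Bourbaki reference, which is immaterial; your sketched root-system justification of that fact is the usual one and closes the gap if a self-contained argument is wanted.
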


\begin{lemma}\label{Extend} 
Suppose $(W,S)$ is a Coxeter system and $x\in S$. If $\alpha$ is a
geodesic in $S-\{x\}$, then the word $(\alpha, x)$ is geodesic.
$\square$
\end{lemma}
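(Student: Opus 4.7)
The plan is to argue by contradiction. Write $\alpha = a_1 \cdots a_n$ with each $a_i \in S - \{x\}$, and assume the extended word $(a_1, \ldots, a_n, x)$ is not geodesic. By the Deletion Condition (Lemma \ref{Del}) applied to this word of length $n+1$, there exist indices $i < j \leq n+1$ so that $a_i a_{i+1} \cdots a_j = a_{i+1} \cdots a_{j-1}$, where $a_{n+1}$ is interpreted as $x$. I would split on whether the second deleted letter lies inside $\alpha$ (case $j \leq n$) or is the appended $x$ (case $j = n+1$).

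In the first case, both deleted positions lie within $\alpha$, so removing them yields a word in $S - \{x\}$ of length $n-2$ representing the same element of $W$ as $\alpha$. This contradicts the hypothesis that $\alpha$ is geodesic in $S - \{x\}$ (a statement that makes sense because Lemma \ref{Order} guarantees $(\langle S - \{x\}\rangle, S - \{x\})$ is itself a Coxeter system). In the second case the deletion reads $a_i a_{i+1} \cdots a_n \cdot x = a_{i+1} \cdots a_n$, and using $a_k^2 = 1$ for each $k$, we solve to get $x = a_n a_{n-1} \cdots a_i \cdot a_{i+1} \cdots a_n$, which expresses $x$ as a word in $S - \{x\}$. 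Hence $x \in \langle \{x\}\rangle \cap \langle S - \{x\}\rangle$. Applying Lemma \ref{Kil} with $I = \{x\}$, $J = S - \{x\}$, and $w = 1$ (so the minimal double coset representative is $d = 1$) gives $\langle \{x\}\rangle \cap \langle S - \{x\}\rangle = \langle I \cap J\rangle = \langle \emptyset\rangle = \{1\}$, which forces $x = 1$ and contradicts $x$ being a generator of order $2$.

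The only delicate point is the second case: the entire content of the lemma is really the assertion that $x$ does not already live in $\langle S - \{x\}\rangle$, and the cleanest route to this is the parabolic intersection formula supplied by Lemma \ref{Kil}. Everything else is bookkeeping from the Deletion Condition.
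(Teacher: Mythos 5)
Your proof is correct. The paper gives no argument for this lemma (it is one of the facts stated as following from the Deletion Condition), and your derivation — deleting a pair inside $\alpha$ contradicts geodesicity in $S-\{x\}$, while a deletion involving the appended letter would force $x\in\langle S-\{x\}\rangle$, impossible since $\langle\{x\}\rangle\cap\langle S-\{x\}\rangle$ is trivial by Lemma \ref{Kil} — is exactly the intended kind of deletion-condition argument, so there is nothing to correct.
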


If $(W,S)$ is a Coxeter system and $w\in W$ then the deletion
condition implies that the letters of $S$ used to compose an
$S$-geodesic for $w$ is independent of which geodesic one
composes for $w$. We define $lett(w)_S$ to be the subset of $S$
used to composes a geodesic for $w$, or when the system is
evident we simply write $lett(w)$.

\begin{lemma}\label{L56}
Suppose $(W,S)$ is a Coxeter system, $w\in W$, $b\in S-lett(w)$,
and $bwb\in \langle lett(w)\rangle$ then $b$ commutes with $lett
(w)$.$\square$
\end{lemma}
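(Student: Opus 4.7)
The plan is to apply Lemma \ref{Kil} to the double coset $\langle L\rangle b\langle L\rangle$, where $L=lett(w)$.

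First I would show that $b$ is the unique minimal-length representative of this double coset. Every element has the form $ubv$ with $u,v\in\langle L\rangle$; if any such product were the identity, then $b=u^{-1}v^{-1}$ would lie in $\langle L\rangle$, but then a shortest word in $L$ representing $b$ would also be $S$-reduced (by the Deletion Condition applied to words in $L$) and hence of length $|b|_S=1$, forcing $b\in L$ and contradicting $b\in S-L$. So every element of the double coset has length at least $1$, $b$ attains this bound, and Lemma \ref{Double} makes $d=b$ the unique minimum.

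Next, Lemma \ref{Kil} with $I=J=L$ and $d=b$ gives $\langle L\rangle\cap b\langle L\rangle b=\langle K\rangle$ where $K=L\cap bLb$. I would identify $K$ as $\{a\in L:m(a,b)=2\}$: the condition $a\in K$ is equivalent to $bab\in L\subseteq S$; but $bab$ lies in the rank-two parabolic $\langle a,b\rangle$, and by the same parabolic-reducedness argument together with Lemma \ref{Order}, the only $S$-elements inside $\langle a,b\rangle$ are $a$ and $b$. Since $b\notin L$, we must have $bab=a$, equivalent to $b$ commuting with $a$.

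To finish, note that $w\in\langle L\rangle$ implies $bwb\in b\langle L\rangle b$, and by hypothesis $bwb\in\langle L\rangle$, so $bwb\in\langle K\rangle$. Because $b$ commutes with every generator of $\langle K\rangle$, it centralizes $\langle K\rangle$, so $w=b(bwb)b\in b\langle K\rangle b=\langle K\rangle$. This forces $lett(w)\subseteq K$, i.e., $L\subseteq K$; combined with $K\subseteq L$ we conclude $K=L$, meaning every element of $L$ commutes with $b$.

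The main point requiring care is the explicit identification of $K$: we must rule out $bab$ equaling a generator of $L$ other than $a$. This is handled by observing that the dihedral parabolic $\langle a,b\rangle$ contains only $a$ and $b$ among elements of $S$, a consequence of Lemma \ref{Order} together with the elementary fact that a reduced word using letters of a subset of $S$ remains $S$-reduced.
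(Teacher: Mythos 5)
Your argument is correct, and all of its ingredients (Lemmas \ref{Order}, \ref{Del}, \ref{Double}, \ref{Kil}, and the well-definedness of $lett(w)$) appear in the paper before Lemma \ref{L56}, so nothing circular is used. The paper itself gives no proof of this lemma; it is listed among the facts ``derived from the deletion condition,'' and the proof the authors presumably have in mind is the direct one: with $L=lett(w)$ and a geodesic $a_1\cdots a_n$ for $w$ in $L$, the word $b\,a_1\cdots a_n\,b$ cannot be geodesic (its letters would then all lie in $lett(bwb)\subset L$), and repeated applications of the deletion condition, using that $(b,a_1,\ldots,a_n)$ is geodesic, force the two copies of $b$ to cancel against each other, which is only possible when $b$ commutes with each $a_i$. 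Your route instead packages this combinatorics into the double-coset machinery: you identify $b$ as the minimal representative of $\langle L\rangle b\langle L\rangle$, apply Lemma \ref{Kil} to get $\langle L\rangle\cap b\langle L\rangle b=\langle K\rangle$ with $K=L\cap bLb$, identify $K$ as the letters of $L$ commuting with $b$ (the dihedral observation $S\cap\langle a,b\rangle=\{a,b\}$ is handled correctly, with $bab=b$ excluded because $b\notin L$), and then conclude $lett(w)\subseteq K$ from $bwb\in\langle K\rangle$ and the fact that $b$ centralizes $\langle K\rangle$. The hands-on deletion argument is more elementary and self-contained; your version is shorter on bookkeeping and makes the structural content (the intersection of parabolics along a minimal coset representative) explicit. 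Either proof is acceptable here.
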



The next lemma is technical but critical to the main
results of the section.

\begin{lemma} \label{L54} 
Suppose $(W,S)$ is a finitely generated Coxeter system and
$A\subset S$ such that $\langle A\rangle$ is infinite and there
is no non-trivial $F\subset A$ such that $\langle F\rangle$ is finite and
$A-F$ commutes with $F$. Then there is an infinite $A$-geodesic
$\alpha$, such that each letter of $A$ appears infinitely many
times in $\alpha$.
\end{lemma}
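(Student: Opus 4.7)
The plan is to reduce the lemma to the case of an irreducible infinite Coxeter system, and then build the geodesic directly in that case. Form the graph $\Delta$ on vertex set $A$ with an edge $\{s,t\}$ whenever $m(s,t)\ne 2$, and let $A=A_1\sqcup\cdots\sqcup A_m$ be the vertex sets of its connected components. Letters in distinct $A_j$'s commute pairwise, so $\langle A\rangle=\langle A_1\rangle\times\cdots\times\langle A_m\rangle$, and by Lemma~\ref{Order} each $(\langle A_j\rangle,A_j)$ is an irreducible Coxeter system. The hypothesis now forces every $\langle A_j\rangle$ to be infinite: if some $\langle A_j\rangle$ were finite then, because $\langle A\rangle$ is infinite, $A_j$ would be a proper nonempty subset of $A$ with $\langle A_j\rangle$ finite and $A-A_j$ commuting elementwise with $A_j$, violating the hypothesis with $F:=A_j$.

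For each $j$, I would build an infinite $A_j$-geodesic $\beta_j$ using every letter of $A_j$ infinitely often by iterating the following sub-claim: given any finite $A_j$-geodesic $\gamma$ representing $w$ and any $a\in A_j$, there is a reduced extension $\gamma\delta$ of $\gamma$ such that $a$ occurs in $\delta$. Given the sub-claim, one cycles through the letters of $A_j$, appending at each step a block that picks up the next prescribed letter; the concatenation is $\beta_j$. To prove the sub-claim, suppose no such extension exists, and let $T\subseteq A_j-\{a\}$ be the set of letters that can appear in some geodesic continuation of $\gamma$; Lemma~\ref{Back} gives $T\ne\emptyset$. Applying the exchange condition (Lemma~\ref{Del}) to the non-reduced word $\gamma\cdot v\cdot a$ for reduced extensions $\gamma\cdot v$ of $\gamma$, and iterating along longer and longer extensions while using the standard fact that $a\notin\langle A_j-\{a\}\rangle$, should force every letter of $A_j-T$ to commute with every letter of $T$, splitting $A_j$ into the two pairwise-commuting pieces $T$ and $A_j-T$ and contradicting its irreducibility.

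Finally, since distinct $A_j$'s commute pairwise, any shuffle of the $\beta_j$'s is reduced in $\langle A\rangle=\prod_j\langle A_j\rangle$, so a round-robin interleaving of the $\beta_j$'s produces the desired infinite $A$-geodesic using every letter of $A$ infinitely often. The main obstacle is the sub-claim of paragraph two: the component decomposition and the final interleaving are essentially bookkeeping, while the sub-claim is where the real content of the hypothesis is used. A self-contained alternative to the exchange-condition argument is to invoke the classical fact that a Coxeter element in an irreducible infinite Coxeter system has translation length equal to the rank, so that the one-sided infinite word $(s_1\cdots s_{n_j})^\infty$ (for any ordering of $A_j$) is itself reduced and uses every generator of $A_j$ infinitely often.
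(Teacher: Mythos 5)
Your overall skeleton is the same as the paper's: split $\langle A\rangle$ into the factors of its finest visual direct product decomposition (your graph components $A_1,\dots ,A_m$ are exactly those factors), observe that the hypothesis forces every factor to be infinite, treat each infinite irreducible factor separately, and interleave the resulting geodesics, with the deletion condition (or additivity of length in a direct product) guaranteeing the shuffle is reduced. The reduction to the irreducible case and the final interleaving are correct and coincide with what the paper does.

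The difference, and the weak point, is the irreducible case itself. The paper does not prove it either; it quotes Lemma 1.15 of \cite{MRT}. Your primary argument, the sub-claim that any finite geodesic in an infinite irreducible factor has a reduced continuation containing a prescribed letter $a$, is only sketched: the sentence ``Applying the exchange condition \dots should force every letter of $A_j-T$ to commute with every letter of $T$'' is precisely the substantive step, and as written it is a hope rather than an argument. You would have to explain how deletions in the words $\gamma v a$, over all long reduced continuations $v$, actually generate commutation relations between all of $T$ and all of $A_j-T$; neither Lemma \ref{Del} nor Lemma \ref{Back} hands you this directly, and making it precise is essentially the content of the lemma from \cite{MRT} that the paper invokes. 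Your fallback route does close the gap: if the powers $(s_1\cdots s_{n_j})^k$ of a Coxeter element of an infinite irreducible Coxeter system are reduced, the periodic word $(s_1\cdots s_{n_j})^{\infty}$ is immediately the desired geodesic. But this is not a ``classical'' fact; it is a theorem of D.~Speyer (Proc.\ Amer.\ Math.\ Soc.\ 137 (2009), ``Powers of Coxeter elements in infinite groups are reduced''), so you must cite it explicitly. Doing so puts your proof on the same methodological footing as the paper's --- an appeal to an external result for the irreducible case --- just with a different external result, and what it buys you is a cleaner, completely explicit periodic geodesic in each factor.
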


\begin{proof} The case when $\langle
A\rangle$ does not (visually) decompose as $\langle A-U\rangle\times \langle
U \rangle$ for any non-trivial $U\subset A$, follows from lemma 1.15 of \cite{MRT}. The general case follows since once the
irreducible case is established, one can interleave geodesics from
each (infinite) factor of a maximal visual direct product
decomposition of $\langle A\rangle$. I.e. if $\langle A\rangle
=\langle A-U\rangle\times \langle U\rangle$, $(x_1,x_2,\ldots )$
and $(y_1,y_2,\ldots)$ are $U$ and $A-U$-geodesics respectively,
then the deletion condition implies $(x_1,y_1,x_2,y_2,\ldots)$ is
an $A$-geodesic.
\end{proof}
 
 \begin{remark}\label{Split} 
Observe that if $(W,S)$ is a Coxeter system, and $W=\langle
F\rangle \times \langle G\rangle=\langle H\rangle \times \langle
I\rangle$ for $F\cup G=S=H\cup I$. Then $W=\langle F\cup H\rangle
\times \langle G\cap I\rangle$ and $\langle F\cup H\rangle
=\langle F\rangle \times \langle H-F\rangle$. In particular, for
$A\subset S$, there is a unique largest subset $C\subset A$ such
that $\langle A\rangle =\langle A-C\rangle \times \langle
C\rangle$ and $\langle C\rangle$ is finite. Define $T_{(W,S)}(A)\equiv
C$ and $E_{(W,S)}(A)\equiv A-C$. When the system is evident we simply write $T_W(A)$ and $E_W(A)$.
\end{remark}

 For a Coxeter system $(W,S)$ and $A\subset S$, let $lk_2(A, (W,S))$
({\it the 2-link of $A$} in the system $(W,S)$) be the set of all $s\in S-A$ that commute
with $A$. For consistency we define $lk_2(\emptyset ,(W,S))=S$. When the system is evident we simply write $lk_2(A)$. 
In the presentation diagram $\Gamma (W,S)$, $lk_2(A)$
is the set of all vertices $s\in S$ such that $s$ is connected to
each element of $A$ by an edge labeled 2.
 
If $G$ is a group with generating set $S$ and $u$ is an $S$-word, denote by $\bar u$  the element of $G$ represented by $u$. 
 
\begin{lemma} \label{Fin2} 
Suppose $(W,S)$ is a Coxeter system, $A\subset S$, and $r$ is an
$A$-geodesic such that each letter of $A$ appears infinitely
often in $r$. If $r$ can be partitioned as $(r_1,r_2,\ldots )$
and $w\in W$ is such that $w\bar r_iw^{-1}=s_i$, $\vert s _i\vert = \vert \bar r_i\vert$, and
$(\beta,r_i,r_{i+1},\ldots )$  and $(r_1,\ldots ,r_i,\beta ^{-1})$
are geodesic for all $i$ where $\beta$ is a geodesic for $w$,
then $w\in \langle A\cup lk_2(A)\rangle$.
\end{lemma}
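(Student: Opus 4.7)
The plan is to show every letter of a given geodesic $\beta$ for $w$ lies in $A\cup lk_2(A)$; this is equivalent to $w\in\langle A\cup lk_2(A)\rangle$ because $\langle A\cup lk_2(A)\rangle$ is a special subgroup. Set $R_n=r_1r_2\cdots r_n$. The geodesic hypothesis $(\beta,r_1,\ldots,r_n)$ gives $|w\bar R_n|=|\beta|+|R_n|$, so $|w\bar R_nw^{-1}|\ge|R_n|$; combined with $|s_1\cdots s_n|\le\sum|s_i|=|R_n|$, we get $|w\bar R_nw^{-1}|=|R_n|$, and concatenating geodesics $\sigma_i$ for the $s_i$ yields a geodesic $\sigma_n$ for $w\bar R_nw^{-1}$. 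Thus $w\bar R_n=s_1\cdots s_n\,\bar\beta$ has the two geodesics $(\beta,R_n)$ and $(\sigma_n,\beta)$, and $\bar R_nw^{-1}$ has the two geodesics $(R_n,\beta^{-1})$ and $(\beta^{-1},\sigma_n)$.

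The key technical step is to show that $s_i=\bar r_i$ for every $i$, equivalently that $w$ commutes with each $\bar r_i$. For fixed $i$, the word $\beta\, r_i\,\beta^{-1}$ has length $2|\beta|+|r_i|$ and represents $s_i$ of length $|r_i|$, so it must reduce via $|\beta|$ applications of the deletion condition (Lemma \ref{Del}). Because $\beta$, $r_i$, $\beta^{-1}$, $(\beta,r_i)$, and $(r_i,\beta^{-1})$ are all geodesic, no deletion pair can lie wholly inside any of these blocks; hence every pair has one letter in the leading $\beta$ and the other in the trailing $\beta^{-1}$. Peeling these pairs from the outermost letters inward, the deletion condition $a_p\cdots a_q=a_{p+1}\cdots a_{q-1}$ yields a commutation $b_j\,m\,b_j=m$ (with $b_j$ a letter of $\beta$ and $m$ an intermediate word built from $r_i$ and inner $\beta$-letters), and the aggregation of these relations over all $i$ forces $s_i=\bar r_i$. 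The infinite-occurrence of each letter of $A$ in $r$ ensures that these local commutations are consistent across different $i$ and place each letter of $\beta$ either inside $A$ or inside $lk_2(A)$.

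Once $s_i=\bar r_i$ is established, $w\bar R_nw^{-1}=\bar R_n$ for every $n$; pick $n$ large so that $lett(\bar R_n)=A$. For each letter $b\in lett(\beta)\setminus A$, the relation $w\bar R_n=\bar R_nw$ peeled one letter of $\beta$ at a time together with the commutations built up in the previous paragraph yields $b\bar R_nb\in\langle A\rangle$; applying Lemma \ref{L56} with $b\notin lett(\bar R_n)=A$ we conclude $b\in lk_2(A)$. Hence $lett(\beta)\subseteq A\cup lk_2(A)$ and $w\in\langle A\cup lk_2(A)\rangle$. The hardest part is the deletion-pair analysis underlying $s_i=\bar r_i$: organizing the $|\beta|$ pairs in $\beta\, r_i\,\beta^{-1}$ so that they peel in the correct order and each yields a clean commutation rather than the weaker twisted equality $b_pm=mb_q$ (for possibly $b_p\ne b_q$) that the deletion condition generally supplies. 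The full strength of the hypothesis, namely that $(\beta,r_j,r_{j+1},\ldots)$ is geodesic for every $j$ simultaneously with each letter of $A$ appearing infinitely often in $r$, is what makes this rigid enough to force the desired commutations.
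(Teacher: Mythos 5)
Your outline hinges on the claim that the hypotheses force $s_i=\bar r_i$ for every $i$, and this claim is never actually proved: you yourself flag the deletion-pair ``peeling'' in $(\beta,r_i,\beta^{-1})$ as the hardest part, and as described it does not go through. Two concrete problems. First, only the \emph{first} application of the deletion condition is constrained by the geodesic hypotheses to pair a letter of the leading $\beta$ with a letter of the trailing $\beta^{-1}$; after that deletion the prefix (a shortened $\beta$ followed by $r_i$) need no longer be geodesic -- the hypotheses control $(\beta,r_i,r_{i+1},\ldots)$, not words obtained from $\beta$ by deleting interior letters -- so the later pairs are not forced to span from prefix to suffix. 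Second, even for a spanning pair the deletion condition only yields the twisted relation $b_p\,m=m\,b_q$ with possibly $b_p\ne b_q$, and the assertion that ``aggregation of these relations over all $i$ forces $s_i=\bar r_i$'' is exactly the missing argument; note that $s_i=\bar r_i$ is strictly stronger than the lemma's conclusion (an element of $\langle A\cup lk_2(A)\rangle$ with nontrivial $\langle A\rangle$-part need not commute with the blocks), so it cannot be justified by appeal to what is being proved. Finally, even granting $s_i=\bar r_i$, your closing step is also unjustified: from $w\bar R_nw^{-1}=\bar R_n$ you cannot ``peel one letter of $\beta$ at a time'' to conclude $b\bar R_nb\in\langle A\rangle$ for an individual letter $b$ of $\beta$; knowing that the product $w$ commutes with $\bar R_n$ gives no letterwise information, and extracting such information is precisely the content of the lemma.

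The paper's proof avoids both difficulties and you should compare it with your plan. Take a minimal-length counterexample $w=w_1\cdots w_n$ and, for each $m$, append the single letter $w_n$ to the geodesic $(w_1,\ldots,w_n,r_1,\ldots,r_m)$: since $w\bar R_mw_n=s_1\cdots s_mw_1\cdots w_{n-1}$ has length at most $|\bar R_m|+n-1$, the extended word is not geodesic, and since $(r_1,\ldots,r_m,w_n)$ is geodesic (a subword of $(r_1,\ldots,r_m,\beta^{-1})$) the deleted partner of the appended $w_n$ must be one of the $w_i$. Pigeonholing over infinitely many $m$ giving the same $i$ shows that $w_n$ commutes with each product $\bar r_{m_j+1}\cdots\bar r_{m_{j+1}}$, so $w_n\in A\cup lk_2(A)$ by lemma \ref{L56}; one then applies the minimality of $w$ to $w'=ww_n$ with the coarsened partition $r'_j=(r_{m_j+1},\ldots,r_{m_{j+1}})$. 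If you want to salvage your approach, replace the attempt to control all $|\beta|$ deletions in $(\beta,r_i,\beta^{-1})$ simultaneously by this one-letter-at-a-time mechanism, which only ever needs the deletion forced by appending a single letter to a word that the hypotheses guarantee is geodesic.
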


\begin{proof} If $w$ is a minimum length counter-example,
then by lemma \ref{L56}, $\vert w\vert>1$. Say $(w_1,\ldots ,w_n)$
is a geodesic for $w$. For all $m$, $(w_1,\ldots ,w_n,r_1,\ldots ,
r_m,w_n)$ is not geodesic and the last $w_n$ deletes with one of
the initial $w_i$. For some $i\in \{1,\ldots ,n\}$, there are
infinitely many $m$ such that the last $w_n$ deletes with $w_i$.
Say this set of such $m$ is $\{m_1,m_2,\ldots \}$ (in ascending order). Then $w_n$
commutes with $\bar r_{m_j+1} \bar r_{m_j+2} \cdots \bar
r_{m_{j+1}}$ for all $j$. By lemma \ref{L56}, $w_n\in A_0\cup
lk_2(A_0)$. Then $w'=w_1\cdots w_{n-1}$ is shorter than $w$ and
satisfies the hypothesis of the lemma with $r$ replaced by
$r'=(r_1',r_2',\ldots )$ where $r_i'=(r_{m_i+1}, r_{m_i+2},\ldots
,r_{m_{i+1}})$. By the minimality of $w$ , $w'\in \langle A_0\cup
lk_2(A_0)\rangle$, and so $w\in \langle A\cup
lk_2(A)\rangle$. 
\end{proof}

The next result is analogous to classical results (see V. Deodhar \cite{Deodhar}).

\begin{lemma}\label{LFin} 
Suppose $(W,S)$ is a finitely generated Coxeter system, $A$ and $B$ are subsets of
$S$, $u$ is a shortest element of
the double coset $\langle B\rangle g\langle A\rangle$, and
$g\langle A\rangle g^{-1}\subset \langle B\rangle$. Then $uAu^{-1}\subset B$ and $lett(u)\subset lk_2(E_W(A))$. In particular, $uxu^{-1}=x$ for all $x\in E_W(A)$ and $E_W(A)\subset E_W(B)$. If additionally, $g\langle A\rangle g^{-1}=\langle B\rangle$, then $uAu^{-1}=B$ and $E_W(A)=E_W(B)$.
\end{lemma}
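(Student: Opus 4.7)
The plan is to apply Lemma \ref{Fin2} to $w=u$, then use the minimality of $u$ in its double coset to sharpen the conclusion. I begin by verifying $uAu^{-1}\subset B$: writing $g=bua$ with $b\in\langle B\rangle$ and $a\in\langle A\rangle$, the hypothesis $g\langle A\rangle g^{-1}\subset\langle B\rangle$ gives $u\langle A\rangle u^{-1}\subset\langle B\rangle$. Lemma \ref{Kil} then yields $u\langle A\rangle u^{-1}=\langle K\rangle$ with $K=B\cap uAu^{-1}$, so $\langle A\rangle=u^{-1}\langle K\rangle u=\langle u^{-1}Bu\cap A\rangle$. Since $A$ is a minimal Coxeter generating set of $\langle A\rangle$ (a standard consequence of the deletion condition), this forces $u^{-1}Bu\cap A=A$, hence $uAu^{-1}\subset B$; in particular each $uau^{-1}$ ($a\in A$) is a single letter of $S$.

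Next, set $E=E_W(A)$ and assume $E\neq\emptyset$, since otherwise $lk_2(E)=S$ and the first assertion is vacuous. The hypothesis of Lemma \ref{L54} is satisfied by $E$: a nontrivial $F\subset E$ with $\langle F\rangle$ finite and $F$ commuting with $E-F$ would make $T_W(A)\cup F\subset A$ witness a strictly larger finite direct factor of $\langle A\rangle$ than $T_W(A)$, contradicting maximality. Lemma \ref{L54} thus supplies an infinite $E$-geodesic $\alpha=(a_1,a_2,\ldots)$ in which each letter of $E$ appears infinitely often. I intend to apply Lemma \ref{Fin2} with partition $r_i=(a_i)$ and $s_i=ua_iu^{-1}\in B$ (so $|s_i|=1=|\bar r_i|$). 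The hypothesis that $(\delta,a_i,\ldots,a_n)$ be geodesic (for $\delta$ a geodesic of $u$) is immediate from Lemma \ref{Double}, since $(a_i,\ldots,a_n)$ is itself an $A$-geodesic. The second hypothesis, $(a_1,\ldots,a_n,\delta^{-1})$ geodesic, is the main obstacle: I would handle it by observing that conjugation by $u$ restricts to a Coxeter-system isomorphism $(\langle A\rangle,A)\to(\langle uAu^{-1}\rangle,uAu^{-1})$, since it carries $A$ bijectively into $S$ and preserves orders of products. By Lemma \ref{Order}, this makes the conjugation length-preserving in $(W,S)$, so $(s_1,\ldots,s_n)$ is an $S$-geodesic. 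Applying Lemma \ref{Double} to $u^{-1}$ (the shortest element of $\langle A\rangle g^{-1}\langle B\rangle$) then gives $(\delta^{-1},s_1,\ldots,s_n)$ geodesic, and rewriting $u^{-1}s_1\cdots s_n=a_1\cdots a_nu^{-1}$ yields the desired statement. Lemma \ref{Fin2} now gives $u\in\langle E\cup lk_2(E)\rangle$.

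To sharpen to $lett(u)\subset lk_2(E)$, I use the direct product $\langle E\cup lk_2(E)\rangle=\langle E\rangle\times\langle lk_2(E)\rangle$: write $u=le$ with $l\in\langle lk_2(E)\rangle$ and $e\in\langle E\rangle$. Lemma \ref{Order} applied to $E\cup lk_2(E)\subset S$ yields $|u|=|l|+|e|$ in $W$; since $e^{-1}\in\langle A\rangle$, the element $l=ue^{-1}$ stays in $\langle B\rangle g\langle A\rangle$, so minimality of $u$ forces $e=1$. Hence $lett(u)\subset lk_2(E_W(A))$, and $u$ commutes with each $x\in E_W(A)$, giving $uxu^{-1}=x$ and $E_W(A)\subset uAu^{-1}\subset B$. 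The inclusion $E_W(A)\subset E_W(B)$ then follows by the same maximality trick: if $F=E_W(A)\cap T_W(B)$ were nonempty, $\langle F\rangle\subset\langle T_W(B)\rangle$ would be finite and $F$ would commute with $E_W(A)-F\subset E_W(B)$ (since $T_W(B)$ commutes with $E_W(B)$), so $T_W(A)\cup F$ would contradict the maximality of $T_W(A)$. Finally, when $g\langle A\rangle g^{-1}=\langle B\rangle$, the relation $u\langle A\rangle u^{-1}=\langle B\rangle$ combined with $uAu^{-1}\subset B$ and the minimality of $B$ as a Coxeter generating set forces $uAu^{-1}=B$; applying the whole argument to $u^{-1}$ with the roles of $A$ and $B$ interchanged then gives $E_W(B)\subset E_W(A)$, completing the equality.
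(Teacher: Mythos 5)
Your proof is correct and follows essentially the same route as the paper's: Lemma \ref{Kil} applied to the minimal double-coset representative gives $uAu^{-1}\subset B$, Lemmas \ref{L54} and \ref{Fin2} (with the length-one partition) give $lett(u)\subset E_W(A)\cup lk_2(E_W(A))$, minimality of $u$ eliminates the $E_W(A)$-letters, and the equality case follows by symmetry using $u^{-1}$. The only difference is that you spell out the verification of the geodesic hypotheses of Lemma \ref{Fin2} and the deduction $E_W(A)\subset E_W(B)$, details the paper leaves implicit.
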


\begin{proof}  Note that $g\langle A\rangle g^{-1}=bua\langle A\rangle a^{-1}u^{-1}b^{-1}\subset \langle B\rangle$ for some $a\in \langle A\rangle$ and $b\in \langle B\rangle$. Then $u\langle A\rangle u^{-1}\subset \langle B\rangle$. 
By lemma \ref{Kil}, $u\langle A\rangle u^{-1}=u\langle A\rangle u^{-1}\cap \langle B\rangle=\langle (uAu^{-1})\cap B\rangle$ and so $\langle A\rangle =\langle  A\cap u^{-1}Bu\rangle$ and $A\subset u^{-1}Bu$ so that $uAu^{-1}\subset B$. 

If $E(A)=\emptyset$ there is nothing more to prove. Otherwise,  lemma \ref{L56} implies there is a geodesic $\alpha$ in the letters of $E_W(A)$, such that each letter of $E_W(A)$ appears infinitely often in $\alpha$. By lemma \ref{Fin2} (with partitioning $r_i$ of length 1), $lett(u)\subset E_W(A)\cup lk_2(E_W(A))$. By the definition of $u$, no geodesic for $u$ can end in a letter of $A$ and so $lett(u)\subset lk_2(E_W(A))$. Then $E_W(A)\subset B$ so $E_W(A)\subset E_W(B)$. 

Now assume $g\langle A\rangle g^{-1}=\langle B\rangle$.  Then as $u^{-1}$ is the shortest element of the double coset $\langle A\rangle g^{-1}\langle B\rangle$, we have $u^{-1}Bu\subset A$ so $uAu^{-1}=B$, and we have $E_W(B)\subset E_W(A)$ so $E_W(A)=E_W(B)$.
\end{proof}

\begin{proposition} \label{Index} 
Suppose $(W,S)$ is a Coxeter system, $B$ is an infinite subgroup
of $W$ and $A\subset S$ such that $\langle A\rangle$ has finite
index in $B$. Then $B=\langle A_0\rangle \times C$ for $A_0\subset
A$ and $C$ a finite subgroup of $\langle lk_2(A_0)\rangle$. (By
lemma \ref{Tits}, $C$ is a subgroup of a finite group $\langle
D\rangle$ such that $D\subset S-A_0$ and $D$ commutes with $A_0$.)
\end{proposition}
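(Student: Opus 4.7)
\emph{Proof plan.} The plan is to take $A_0 = E_W(A)$ and produce the complement $C$ as $B \cap \langle lk_2(A_0)\rangle$. By Remark~\ref{Split}, with $T = T_W(A)$ one has $\langle A\rangle = \langle A_0\rangle \times \langle T\rangle$, $\langle T\rangle$ finite, and $T \subset lk_2(A_0)$ (since $T$ commutes with $A_0$ and is disjoint from it). Because $[\langle A\rangle:\langle A_0\rangle]$ is finite, $\langle A_0\rangle$ has finite index in $B$; moreover, by maximality of $T$ the set $A_0$ satisfies the hypothesis of Lemma~\ref{L54}.

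The central step will be to show that $\langle A_0\rangle$ is normal in $B$. For $b \in B$, let $d$ be the shortest element of the double coset $\langle A\rangle b\langle A\rangle$, so $b = a_1 d a_2$ with $a_1,a_2 \in \langle A\rangle$. Since $\langle A\rangle$ and $b\langle A\rangle b^{-1}$ each have finite index in $B$, so does their intersection, which by Lemma~\ref{Kil} equals $a_1\langle K\rangle a_1^{-1}$ with $K = A \cap dAd^{-1}$; hence $\langle K\rangle$ has finite index in $\langle A\rangle$. The key claim is $A_0 \subset K$: splitting $K = (K\cap A_0) \sqcup (K\cap T)$ yields $\langle K\rangle = \langle K\cap A_0\rangle \times \langle K\cap T\rangle$, so finite index reduces to $\langle K\cap A_0\rangle$ being of finite index in $\langle A_0\rangle$. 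Were some $t \in A_0 \setminus K$ to exist, Lemma~\ref{L54} would furnish an infinite $A_0$-geodesic $r$ in which $t$ recurs infinitely often; pigeonholing the prefixes of $r$ modulo $\langle K\cap A_0\rangle$ would produce arbitrarily long geodesic subwords representing elements of $\langle K\cap A_0\rangle$ which, by the deletion condition, avoid the letter $t$ --- contradicting that $t$ recurs. Hence $d^{-1}A_0 d \subset A$. Since $d^{-1}$ is the shortest element of $\langle A\rangle d^{-1}\langle A\rangle$ (by symmetry) and therefore also of $\langle A\rangle d^{-1}\langle A_0\rangle$, Lemma~\ref{LFin} applied with roles $A \leftarrow A_0$, $B \leftarrow A$, $g \leftarrow d^{-1}$ (using $E_W(A_0) = A_0$) gives $lett(d) \subset lk_2(A_0)$. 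So $d$ centralizes $\langle A_0\rangle$, and combined with the fact that $\langle A_0\rangle$ is a direct factor of $\langle A\rangle$, this forces $b\langle A_0\rangle b^{-1} = \langle A_0\rangle$.

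To extract the complement, decompose $a_i = \alpha_i\tau_i$ with $\alpha_i \in \langle A_0\rangle$ and $\tau_i \in \langle T\rangle$, and rearrange $b = (\alpha_1\alpha_2)(\tau_1 d \tau_2)$ using that both $d$ and $T$ centralize $\langle A_0\rangle$. The factor $c_b := \tau_1 d \tau_2$ lies in $\langle lk_2(A_0)\rangle$ (since $T \cup lett(d) \subset lk_2(A_0)$) and in $B$ (since $c_b = (\alpha_1\alpha_2)^{-1}b$). Setting $C = B \cap \langle lk_2(A_0)\rangle$, we obtain $B = \langle A_0\rangle \cdot C$ with $C$ centralizing $\langle A_0\rangle$. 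Because $A_0$ and $lk_2(A_0)$ are disjoint commuting subsets of $S$, $\langle A_0\rangle \cap \langle lk_2(A_0)\rangle = 1$, giving $B = \langle A_0\rangle \times C$; and $C \cong B/\langle A_0\rangle$ is finite because $\langle A_0\rangle$ has finite index in $B$.

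The main obstacle is the pigeonhole-and-deletion argument establishing $A_0 \subset K$ --- equivalently, that $\langle A_0\rangle$ admits no proper standard parabolic subgroup of finite index. Once that is in hand, the double-coset machinery of Lemmas~\ref{Kil} and~\ref{LFin}, together with the commutation structure captured by $lk_2(A_0)$, handles the rest.
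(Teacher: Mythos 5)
Your proof is correct, but it takes a genuinely different route from the paper's. Both arguments start the same way, setting $A_0=E_W(A)$ and noting (via maximality of $T_W(A)$) that $A_0$ satisfies the hypothesis of Lemma~\ref{L54}; after that they diverge. The paper finds a finite-index subgroup $A'\subset\langle A_0\rangle$ normal in $B$, partitions an $A_0$-geodesic from Lemma~\ref{L54} into blocks lying in $A'$, takes a shortest $b\in B$ outside $\langle A_0\rangle\times\langle lk_2(A_0)\rangle$, verifies by deletion-condition bookkeeping that the relevant concatenations are geodesic, and then applies Lemma~\ref{Fin2} directly to $b$. You instead prove that the shortest double-coset representative $d$ of $\langle A\rangle b\langle A\rangle$ centralizes $\langle A_0\rangle$, by combining Lemma~\ref{Kil} with Lemma~\ref{LFin}; the key input for this is your pigeonhole-plus-deletion argument that no proper standard parabolic of $\langle A_0\rangle$ has finite index, which is in effect a direct proof of (the relevant case of) Lemma~\ref{properE} --- a statement the paper only obtains later as a consequence of this proposition, so you rightly could not cite it, and your independent proof of it avoids any circularity (Lemmas~\ref{Kil}, \ref{L54}, \ref{Fin2} and \ref{LFin} all precede the proposition and do not depend on it). Your route buys brevity by delegating the hard geodesic combinatorics to Lemma~\ref{LFin} (itself proved via Lemma~\ref{Fin2}), and it yields two useful byproducts: the normality of $\langle A_0\rangle$ in $B$ and the infinite-index statement for proper standard parabolics of $\langle E_W(A)\rangle$; the paper's proof is more self-contained at the level of words and geodesics, applying Lemma~\ref{Fin2} to the element $b$ itself rather than through the double-coset machinery. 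Two small points you compress --- that $E_W(A_0)=A_0$ and that finite index of $\langle K\rangle$ in $\langle A\rangle$ forces finite index of $\langle K\cap A_0\rangle$ in $\langle A_0\rangle$ --- are both immediate (the first from maximality of $T_W(A)$, the second by intersecting with $\langle A_0\rangle$ and using $\langle K\rangle\cap\langle A_0\rangle=\langle K\cap A_0\rangle$), so they are not gaps.
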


\begin{proof} Let $A_0\equiv E_W(A)$. By lemma \ref{L54}
there is an infinite-length $A_0$-geodesic $r$, such that each
letter in $A_0$ appears infinitely often in $r$. The group
$\langle A_0\rangle$ contains a subgroup $A'$ which is a normal
finite-index subgroup of $B$. Let $\alpha _i$ be the initial
segment of $r$ of length $i$, and $C_i$ the $B/A'$ coset
containing $\bar \alpha_i$, the element of $W$ represented by
$\alpha _i$. Let $i$ be the first integer such that $C_i=C_j$ for
infinitely many $j$. Replace $r$ by the terminal segment of $r$
that follows $\alpha _i$. Then $r$ can be partitioned into
geodesics $(r_1,r_2,\ldots )$ such that $\bar r_i\in A'$. Hence
for any $i$ and any $b\in B$, $b\bar r_i b^{-1}\in A'\subset \langle
A_0\rangle$.

It suffices to show that $B\subset \langle A_0\rangle \times \langle
lk_2(A_0)\rangle$, since then each $b\in B$ is such that $b=xy$
with $x\in \langle A_0\rangle$ and $y\in \langle
lk_2(A_0)\rangle$. As $A_0\subset B$, $y\in B$ and so $B=\langle
A_0\rangle \times (B\cap\langle lk_2(A_0)\rangle)$. (Recall
$\langle A_0\rangle$ has finite index in $B$.)

Suppose $b$ is a shortest element of $B$ such that $b\not \in
\langle A_0\rangle \times \langle lk_2(A_0)\rangle$.  Let $\beta
$ be a geodesic for $b$.

\noindent {\bf Claim} The path $(\beta, r_1,r_2,\ldots )$ is
geodesic.

\noindent {\bf Proof:} Otherwise let $i$ be the first integer such
that $(\beta , \alpha _i)$ (recall $\alpha_i$ is the initial
segment of $r$ of length $i$) is not geodesic. Then $\bar \beta
\bar \alpha _i =\bar \gamma \bar \alpha _{i-1}$ where $\gamma$ is
obtained from $\beta$ by deleting some letter and $ (\gamma,
\alpha _{i-1})$ is geodesic. We have $\bar \gamma \bar \alpha
_{i-1}=b\bar \alpha _i$, and $\{b, \bar \alpha _{i-1},\bar \alpha
_i\}\subset B$, so $\bar \gamma \in B$.

We conclude the proof of this claim by showing: If $b$ is a shortest element of $B$ such that $b\not \in \langle A_0\cup lk_2(A_0)\rangle$ and $\beta$ is a geodesic for $b$, then a letter cannot be deleted from $\beta$ to give
a geodesic for an element of $B$. 

\noindent Otherwise, suppose $\beta =(b_1,\ldots
,b_m)$, $\gamma =(b_1,\ldots ,b_{i-1}, b_{i+1},\ldots ,b_m)$ is
geodesic, and $\bar \gamma \in B$. By the minimality hypothesis,
$\{b_1,\ldots ,b_{i-1}, b_{i+1},\ldots b_m\}\subset A_0\cup
lk_2(A_0)$. ``Sliding" $lk_2(A_0)$-letters of $\beta$ before
$b_i$ ``back" and those after $b_i$ ``forward", gives a geodesic
$(\beta _1,\beta_2,b_i,\beta _3,\beta _4)$ for $b$, with $lett
(\beta _1)\cup lett (\beta _4)\subset lk_2(A_0)$ and $lett (\beta
_2)\cup lett (\beta _3)\subset A_0$. Now, $\bar \beta _1\bar \beta
_2b_i\bar\beta _3 \bar \beta _4 \bar r_1\cdots \bar r_j \bar
\beta_ 4^{-1} \bar \beta _3^{-1}b_i\bar \beta_2 ^{-1}\bar \beta
_1^{-1}\in A' \subset \langle A_0\rangle$, for each $j$. This implies
$b_i\bar\beta _3 \bar r_1\cdots \bar r_j \bar \beta _3^{-1}b_i\in
\langle A_0\rangle$. For large $j$, $lett (\bar\beta _3 \bar
r_1\cdots \bar r_j \bar \beta _3^{-1})=A_0$. By lemma \ref{L56},
$b_i\in A_0\cup lk_2(A_0)$, and so $b\in \langle A\cup lk_2(A_0)\rangle$. This is contrary to our assumption and the claim is proved. $\square$

The same proof shows $(\beta, r_k,r_{k+1},\ldots )$ is geodesic
for all $k$.

Let $\delta_i$ be a geodesic for $b\bar r_ib^{-1}\in \langle
A_0\rangle$. Next we show $\vert \delta _i\vert =\vert r_i\vert$.
As $(\beta, r_i)$ is geodesic and $b\bar r_i=\bar
\delta_ib$, $\vert \delta _i\vert \geq \vert r_i\vert$. If
$\vert \delta _i\vert > \vert r_i\vert$ then $(\delta _i,\beta
)$ is not geodesic. Say $\delta _i=(x_1,\ldots ,x_k)$ for $x_i\in A_0$. Let $j$ be the largest integer such that $(x_j,\ldots , x_k, b_1,\ldots ,b_m)$ is not geodesic. Then $x_j$ deletes with say $b_i$ and $(x_{j+1},\ldots ,x_k,b_1,\ldots ,b_{i-1},b_{i+1},\ldots ,b_m)$ is geodesic. As 
$$x_{j+1} \ldots x_kb_1\ldots b_{i-1}b_{i+1}\ldots b_m=x_j\ldots x_kb\in B$$
the word $(b_1,\ldots ,b_{i-1},b_{i+1},\ldots ,b_m)$ is a geodesic for an element of $B$. This is impossible by the closing argument of our
claim.

Since $(\beta , r_1,\ldots ,r_i)$ is geodesic for all $i$, so is
$(\delta _1,\ldots ,\delta _i,\beta)$. Since 
$$(r_1,\ldots ,r_i,\beta ^{-1})^{-1}=(\beta, r_i^{-1},\ldots ,r_1^{-1})$$ 
the claim shows $(r_1,\ldots ,r_i,\beta
^{-1})$ is geodesic for all $i$. The proposition now follows directly from lemma \ref{Fin2}.
\end{proof}

\section{Minimal Splittings}

Recall that a subgroup $A$ of $W$ is a {\it minimal splitting
subgroup} of $W$ if $W$ splits non-trivially over $A$, and there
is no subgroup $B$ of $W$ such that $W$ splits non-trivially over
$B$, and $B\cap A$ has infinite index in $A$ and finite index in
$B$.

For a Coxeter system $(W,S)$ we defined $M(W)$ to be the
collection of minimal splitting subgroups groups of $W$.
Observe that if $W$ has more than 1-end, then each member of
$M(W)$ is a finite group. 
Define $K(W,S)$ to be the set of all subgroups of $W$ of the form $\langle A\rangle\times M$ for
$A\subset S$, and $M$ a subgroup of a finite special subgroup of
$\langle lk_2(A)\rangle$ (including when $\langle A
\rangle$ and/or $M$ is trivial). If $W$ is finitely generated, then
$K(W,S)$ is finite.

\begin{lemma} \label{L16N} 
Suppose $(W,S)$ is a finitely
generated Coxeter system and $\Lambda$ is a non-trivial reduced
graph of groups decomposition of $W$ such that each edge group of
$\Lambda$ is in $M(W)$. If $\Psi$ is a reduced visual graph of groups 
decomposition for $W$ such that each edge group of $\Psi$ is conjugate to a subgroup of $\Lambda$ then each edge group of $\Psi$ is in
$M(W)$. $\square$
\end{lemma}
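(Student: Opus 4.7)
The plan is to take an arbitrary edge group $E$ of $\Psi$ and show $E\in M(W)$ directly from the definition. Since $\Psi$ is a visual graph of groups decomposition, $E=\langle C\rangle$ for some $C\subset S$ and the decomposition $\Psi$ itself witnesses a non-trivial splitting of $W$ over $E$. So it remains only to show no splitting subgroup of $W$ is smaller than $E$.

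By hypothesis, there is $g\in W$ and an edge $F$ of $\Lambda$ with $gEg^{-1}\subset \Lambda(F)$, and $\Lambda(F)\in M(W)$. The first key observation is that $gEg^{-1}$ has finite index in $\Lambda(F)$. Indeed, $W$ splits non-trivially over $gEg^{-1}$ (being a conjugate of $E$), so by minimality of $\Lambda(F)$, the subgroup $gEg^{-1}$ cannot be smaller than $\Lambda(F)$; but $gEg^{-1}\cap\Lambda(F)=gEg^{-1}$ already has finite index in itself, so the only way to avoid being smaller is for it to have finite index in $\Lambda(F)$.

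Now suppose, for contradiction, that some subgroup $K$ of $W$ admits a non-trivial splitting of $W$ and is smaller than $E$. Conjugating by $g$, the group $gKg^{-1}$ is smaller than $gEg^{-1}$, and $W$ splits over $gKg^{-1}$. I claim $gKg^{-1}$ is then smaller than $\Lambda(F)$: this is an elementary commensurability fact, namely that if $H$ is smaller than $K_0$ and $K_0$ has finite index in $K_0'$, then $H$ is smaller than $K_0'$. Indeed $H\cap K_0'\supset H\cap K_0$ has finite index in $H$; and if $H\cap K_0'$ had finite index in $K_0'$, then $(H\cap K_0')\cap K_0=H\cap K_0$ would be the intersection of two finite index subgroups of $K_0'$, hence of finite index in $K_0'$ and in $K_0$, contradicting that $H$ is smaller than $K_0$. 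Applying this with $H=gKg^{-1}$, $K_0=gEg^{-1}$, $K_0'=\Lambda(F)$ gives the claim. But then $gKg^{-1}$ is a splitting subgroup of $W$ smaller than $\Lambda(F)$, contradicting $\Lambda(F)\in M(W)$.

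There is no real obstacle here; the content of the lemma is the commensurability transfer in the previous paragraph, which is standard. The reason no proof is displayed in the excerpt (only $\square$) is that the argument is this short, and the visual decomposition theorem plus remark 1 of \cite{MTVisual} already supplies the hypothesis that edge groups of $\Psi$ are conjugate into edge groups of $\Lambda$, from which the minimality transfers by the above.
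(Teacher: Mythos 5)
Your proof is correct, and since the paper states Lemma \ref{L16N} without displaying an argument (it is marked as immediate), your write-up supplies exactly the intended routine verification: a conjugate of the visual edge group sits inside a minimal edge group of $\Lambda$ with finite index (else it would be smaller than that minimal group), and the elementary fact that ``smaller than'' passes across a finite-index inclusion then transfers minimality back to the edge group of $\Psi$. No gaps; the commensurability transfer step is stated and checked correctly.
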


\begin{lemma}\label{Minform} 
Suppose $(W,S)$ is a finitely generated Coxeter system and $G$ is a group in $M(W)$. Then $G$ is conjugate to a group in $K(W,S)$.
\end{lemma}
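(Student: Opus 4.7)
The plan is to split into two cases depending on whether $G$ is finite or infinite, and in each case exhibit explicitly the conjugating element.

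If $G$ is finite, lemma \ref{Tits} directly provides $h \in W$ with $hGh^{-1}$ contained in a finite special subgroup $\langle D\rangle$ of $W$. Taking $A = \emptyset$ and $M = hGh^{-1}$ we have $hGh^{-1} = \langle A\rangle \times M$, and since $lk_2(\emptyset) = S$, the subgroup $M$ sits inside the finite special subgroup $\langle D\rangle$ of $\langle lk_2(\emptyset)\rangle = W$. Hence $hGh^{-1} \in K(W,S)$.

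Now suppose $G$ is infinite. Minimality of $G$ gives a reduced non-trivial splitting $\Lambda\colon W = A'\ast_G B'$, and since $\Lambda$ is reduced and non-trivial, both $A'$ and $B'$ are proper subgroups of $W$. Applying theorem \ref{MT1} to $\Lambda$ yields a visual decomposition $\Psi$ of $W$ whose vertex and edge groups lie in conjugates of those of $\Lambda$. Reducing $\Psi$ to a reduced visual decomposition $\Psi'$, the vertex groups of $\Psi'$ remain subgroups of conjugates of $A'$ or $B'$, so $\Psi'$ cannot be the trivial (one-vertex) decomposition---otherwise $W$ itself would be a subgroup of a conjugate of a proper subgroup of $W$. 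Hence $\Psi'$ has an edge $D'$ with $\Psi'(D') = \langle E\rangle$ for some $E \subset S$ and $\langle E\rangle \subset gGg^{-1}$ for some $g \in W$. Since $\Psi'$ is reduced, removing $D'$ gives a non-trivial splitting of $W$ over $\langle E\rangle$, so $\langle E\rangle$ is itself a splitting subgroup of $W$.

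Minimality of $gGg^{-1}$ forbids $\langle E\rangle$ from being smaller than it; as $\langle E\rangle \subset gGg^{-1}$ this forces $\langle E\rangle$ to have finite index in $gGg^{-1}$. Proposition \ref{Index} then expresses $gGg^{-1} = \langle A_0\rangle \times C$ with $A_0 = E_W(E) \subset S$ and $C$ a finite subgroup of $\langle lk_2(A_0)\rangle$. Applying lemma \ref{Tits} inside the Coxeter subgroup $\langle lk_2(A_0)\rangle$ to $C$ gives $h \in \langle lk_2(A_0)\rangle$ with $hCh^{-1}$ contained in a finite special subgroup $\langle D\rangle$ for some $D \subset lk_2(A_0)$; since $h$ commutes with $A_0$, we obtain $(hg)G(hg)^{-1} = \langle A_0\rangle \times hCh^{-1}$, a member of $K(W,S)$. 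The main technical point is the assertion that the reduced visual $\Psi'$ is non-trivial and carries an edge whose group is contained in a conjugate of $G$; this is what unlocks proposition \ref{Index}, after which the minimality hypothesis forces everything into the required direct-product form.
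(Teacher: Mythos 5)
Your argument is correct and takes essentially the same route as the paper: apply the visual decomposition theorem to a splitting of $W$ over $G$ to obtain a visual splitting subgroup $\langle E\rangle$ conjugate into $G$, use minimality to conclude finite index, and finish with Proposition \ref{Index} (together with Lemma \ref{Tits}). Your separate treatment of the finite case and the explicit conjugation of the finite factor into a finite special subgroup of $\langle lk_2(A_0)\rangle$ simply fill in details the paper's two-line proof leaves implicit.
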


\begin{proof}
By theorem \ref{MT1}, there is $E\subset S$  and $w\in W$ such that $W$ splits non-trivially over $E$ and $w\langle E\rangle w^{-1}$ is conjugate to a subgroup of $G$. By the minimality of $G$, $\langle E\rangle$ has finite index in $w^{-1}Gw$ and the lemma follows from theorem \ref{Index}.
\end{proof}

\begin{example}
Consider the Coxeter system $(W,S)$ with $S=\{a,b,c,d,x,y\}$,
$m(u,v)=2$ if $u\in \{a,c,d\}$ and $v\in \{x,y\}$,
$m(a,b)=m(b,c)=2$, $m(c,d)=3$, $m(x,b)=m(y,b)=3$ and $m(x,y)=m(a,c)=m(a,d)=m(b,d)=\infty$. The group $W$ is 1-ended since no subset of $S$ separates the presentation
diagram $\Gamma (W,S)$ and also generates a finite group. The group $\langle x,c,y\rangle$ is a member of $M(W)$, since it is 2-ended and $\{x,c,y\}$ separates $\Gamma$. 
The set $\{x,y,b\}$ separates $\Gamma$, but $\langle x,b,y\rangle\not\in M(W)$ 
since $\langle x,y\rangle$ has finite index in 
$\langle x,c,y\rangle$ and infinite index in 
$\langle x,b,y\rangle$. 
Note that no subset of $\{x,b,y\}$ generates a group in $M(W)$. 

The element $cd$ conjugates $\{x,c,y\}$
to $\{x,d,y\}$. So, $\langle x,d,y\rangle \in M(W)$. Hence a
visual subgroup in $M(W)$ need not separate $\Gamma (W,S)$.
\end{example}

\begin{proposition} \label{L24N}
Suppose $(W,S)$ is a finitely generated Coxeter system and
$W=A\ast _CB$ is a non-trivial splitting of $W$. Then there
exists $D\subset S$ and $w\in W$ such that $\langle D\rangle \in
M(W)$, $D$ separates $\Gamma( W,S)$ and  $w\langle E_W(D)\rangle w^{-1}\subset C$ (so $w\langle D\rangle
w^{-1}\cap C$ has finite index in $w\langle D\rangle w^{-1}$).
Furthermore, if $C\in M(W)$ then $w\langle
E_W(D)\rangle w^{-1}$ has finite index in $C$.
\end{proposition}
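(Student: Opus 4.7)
The plan is to extract a visual splitting subgroup via theorem \ref{MT1}, descend until reaching $M(W)$ while transporting the conjugation into $C$, and then deduce the furthermore clause from pairwise incomparability in $M(W)$.

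Apply theorem \ref{MT1} to $W=A\ast_CB$ to obtain a reduced visual decomposition $\Psi$; by the remark following theorem \ref{MT1}, some edge group $\langle E_0\rangle$ of $\Psi$ has $E_0\subset S$ separating $\Gamma(W,S)$ and $w_0\langle E_0\rangle w_0^{-1}\subset C$ for some $w_0\in W$. Consider the nonempty collection
\[
\mathcal{G}=\{(D,w):D\subset S\text{ separates }\Gamma(W,S),\ w\langle E_W(D)\rangle w^{-1}\subset C\}.
\]
I claim some $(D,w)\in\mathcal{G}$ has $\langle D\rangle\in M(W)$. Suppose not and pick $(D,w)\in\mathcal{G}$. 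Using transitivity of the ``smaller'' relation together with the finiteness of $K(W,S)$ via lemma \ref{Minform} to terminate the descent through splitting subgroups smaller than $\langle D\rangle$, pick $K\in M(W)$ smaller than $\langle D\rangle$. Theorem \ref{MT1} applied to the splitting of $W$ over $K$ gives a visual splitting subgroup $\langle D'\rangle$ with $D'\subset S$ separating $\Gamma$ and $g\langle D'\rangle g^{-1}\subset K$; minimality of $K$ forces this conjugate to have finite index in $K$, and an index computation yields $\langle D'\rangle\in M(W)$ (a splitting subgroup $J$ smaller than $\langle D'\rangle$ would make $gJg^{-1}$ smaller than $K$).

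It remains to produce $w'$ with $(D',w')\in\mathcal{G}$. Since $K\cap\langle D\rangle$ has finite index in $K$, the subgroup $\langle E_W(D')\rangle\cap g^{-1}\langle D\rangle g$ has finite index in $\langle E_W(D')\rangle$; letting $v$ be the shortest element of the double coset $\langle E_W(D')\rangle g^{-1}\langle D\rangle$, lemma \ref{Kil} identifies this intersection, up to $\langle E_W(D')\rangle$-conjugation, with $\langle P\rangle$ where $P=E_W(D')\cap vDv^{-1}$ satisfies $v^{-1}Pv\subset D$, and $\langle P\rangle$ has finite index in $\langle E_W(D')\rangle$. Lemma \ref{LFin} applied to the containment $v^{-1}\langle P\rangle v\subset\langle D\rangle$ then gives $E_W(P)\subset E_W(D)$. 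Now $\langle E_W(P)\rangle$ has finite index in $\langle E_W(D')\rangle$ (through $\langle P\rangle$), and $E_W(P)\subset P\subset E_W(D')$; since $T_W(E_W(D'))=\emptyset$ by definition of $E_W$, the rigidity of visual subgroups of an infinite Coxeter group with no nontrivial finite visual direct factor (a finite-index visual subgroup must equal the whole group) forces $E_W(P)=E_W(D')$, and hence $P=E_W(D')$. Thus $v^{-1}E_W(D')v\subset D$. Partition $v^{-1}E_W(D')v=X\sqcup Y$ with $X\subset E_W(D)$ and $Y\subset T_W(D)$; by commutativity, $\langle v^{-1}E_W(D')v\rangle=\langle X\rangle\times\langle Y\rangle$ with $\langle Y\rangle$ finite. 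Conjugating back by $v$ presents this as a visual decomposition $\langle E_W(D')\rangle=\langle vXv^{-1}\rangle\times\langle vYv^{-1}\rangle$ with $vYv^{-1}\subset E_W(D')$ and $\langle vYv^{-1}\rangle$ finite, so $T_W(E_W(D'))=\emptyset$ forces $Y=\emptyset$. Thus $v^{-1}\langle E_W(D')\rangle v\subset\langle E_W(D)\rangle$, and $w'=wv^{-1}$ gives $w'\langle E_W(D')\rangle(w')^{-1}\subset w\langle E_W(D)\rangle w^{-1}\subset C$, producing the desired contradiction.

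For the furthermore clause, if $C\in M(W)$ then $w\langle D\rangle w^{-1}\in M(W)$; since $w\langle E_W(D)\rangle w^{-1}$ has finite index in $w\langle D\rangle w^{-1}$, the intersection $C\cap w\langle D\rangle w^{-1}$ has finite index in $w\langle D\rangle w^{-1}$, so $w\langle D\rangle w^{-1}$ is not smaller than $C$. Minimality of $C$ precludes $C$ being smaller than $w\langle D\rangle w^{-1}$, forcing $C\cap w\langle D\rangle w^{-1}$ to have finite index in $C$, and thus $w\langle E_W(D)\rangle w^{-1}$ has finite index in $C$. The main obstacle is preserving the conjugation-into-$C$ property during descent: since $K$ relates to $\langle D\rangle$ only via the ``smaller'' relation, one must use lemma \ref{Kil} to extract a visual intersection, lemma \ref{LFin} to promote this intersection to match $E_W(D')$ via rigidity of Coxeter direct factors, and then a partition argument to route the resulting visual subgroup through $\langle E_W(D)\rangle$ rather than $\langle T_W(D)\rangle$.
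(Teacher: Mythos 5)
Your second half (the Kil/LFin/rigidity computation showing $v^{-1}\langle E_W(D')\rangle v\subset\langle E_W(D)\rangle$, and the routing of the conjugation back into $C$) is essentially sound and close in spirit to the paper's key computation. The genuine gap is the step ``pick $K\in M(W)$ smaller than $\langle D\rangle$.'' You justify it by ``transitivity of the smaller relation together with the finiteness of $K(W,S)$ via lemma \ref{Minform} to terminate the descent,'' but lemma \ref{Minform} only says that groups \emph{already in} $M(W)$ are conjugate into $K(W,S)$; the intermediate terms of a descending chain of splitting subgroups below $\langle D\rangle$ need not be minimal, need not be conjugate into $K(W,S)$, and need not even be finitely generated, so the finiteness of $K(W,S)$ gives no bound on such a chain, and nothing you cite rules out an infinite strictly descending chain (or cycling among conjugates, which itself requires the Kil/LFin/Index rigidity to exclude). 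The assertion that below every non-minimal splitting subgroup there is a \emph{minimal} splitting subgroup is, up to the visual refinement, exactly the content of proposition \ref{L24N}; assuming it at the outset makes the argument circular at its crucial point.

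The paper avoids this by never invoking an abstract minimal $K$: it runs the descent through \emph{visual} edge groups. Starting from a visual $D_1$ with a conjugate in $C$, if $\langle D_1\rangle\notin M(W)$ one takes a splitting subgroup $C_1$ smaller than $\langle D_1\rangle$, extracts a visual $D_2$ with a conjugate in $C_1$, and proves via lemmas \ref{Kil}, \ref{LFin} and proposition \ref{Index} that $E_W(D_2)\subsetneq E_W(D_1)$ with $E_W(D_2)\subset D_1$ (so the conjugation into $C$ is preserved); finiteness of $S$ then terminates the process. Your Step B already contains most of these tools, and the finite-index half of ``smaller'' is what you use there; to repair the proof you would additionally use the infinite-index half to get strict decrease of $E_W(\cdot)$ and organize the argument as this descent rather than presupposing $K$. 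A secondary slip: in the furthermore clause the logic is inverted — minimality of $C$ says no splitting subgroup (in particular $w\langle D\rangle w^{-1}$) is smaller than $C$, and combined with $w\langle E_W(D)\rangle w^{-1}\subset C$ having finite index in $w\langle D\rangle w^{-1}$ this forces finite index in $C$; it is not that ``minimality of $C$ precludes $C$ being smaller than $w\langle D\rangle w^{-1}$.'' The intended conclusion is correct, but as written the deduction is misstated.
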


\begin{proof}
The second part of this follows trivially from the definition of
$M(W)$ and theorem \ref{MT1}. Let $\Psi_1$ be a reduced visual
graph of groups decomposition for $A\ast _CB$. Each edge group of
$\Psi_1$ is a subgroup of a conjugate of $C$. Say $D_1\subset S$
and $\langle D_1\rangle$ is an edge group of $\Psi_1$. Then $W$
splits non-trivially as $\langle E_1\rangle \ast _{\langle
D_1\rangle}\langle F_1\rangle$, where $E_1\cup F_1=S$ and
$E_1\cap F_1=D_1$. If $\langle D_1\rangle$ is not in $M(W)$, there
exists $C_1$ a subgroup of $W$, such that $W$ splits
non-trivially as $A_1\ast _{C_1}B_1$ and such that $C_1\cap
\langle D_1\rangle$ has infinite index in $\langle D_1\rangle$
and finite index in $C_1$. Let $\Psi_2$ be a reduced visual
decomposition for $A_1\ast _{C_1}B_1$, and $D_2\subset S$ such
that $\langle D_2\rangle$ is an edge group of $\Psi _2$. Then a
conjugate of $\langle D_2\rangle$ is a subgroup of $C_1$, and
$W=\langle E_2\rangle \ast _{\langle D_2\rangle}\langle
F_2\rangle$, where $E_2\cup F_2=S$ and $E_2\cap F_2=D_2$. For
$i\in \{1,2\}$, $\langle D_i\rangle =\langle U_i\rangle \times
\langle V_i\rangle$ where $U_i=E_W(D_i)$ and $V_i=T_W(D_i)$ (so by
remark \ref{Split}, $U_i\cup V_i=D_i$ and $V_i$ is the (unique)
largest such subset of $D_i$ such that $\langle V_i\rangle$ is
finite).

It suffices to show that $U_2$ is a proper subset of $U_1$.
Choose $g\in W$ such that $g\langle D_2\rangle g^{-1}\subset C_1$. Then
by lemma \ref{Kil}, $g\langle D_2\rangle g^{-1}\cap \langle
D_1\rangle =d\langle K\rangle d^{-1}$ for $d\in \langle
D_1\rangle$ and $K=D_1\cap mD_2m^{-1}$ where $m$ is the minimal
length double coset representative of $\langle D_1\rangle
g\langle D_2\rangle$. Write $\langle K\rangle= \langle U_3\rangle
\times \langle V_3\rangle $ with $U_3=E_W(K)$ and $V_3=T_W(K)$. As $K\subset D_1$, $E_W(K)\subset E_W(D_1)$, so $U_3\subset U_1$. As $m^{-1}Km\subset D_2$, lemma \ref{LFin} implies $E_W(K)\subset E_W(D_2)$ so $U_3\subset U_2$. Hence $U_3\subset U_1\cap U_2$. 
Since $C_1\cap\langle D_1\rangle$ has infinite index in $\langle D_1\rangle $,  $d\langle
K\rangle d^{-1}$ has infinite index in $\langle D_1\rangle$. As $d_1\in \langle D_1\rangle$,  $\langle K\rangle$ has infinite index in $\langle D_1\rangle$.
Hence $U_3$ is a proper subset of $U_1$.

Recall that $g\langle D_2\rangle g^{-1}\subset C_1$ and $C_1\cap \langle
D_1\rangle$  has finite index in $C_1$ so $d\langle K\rangle
d^{-1}=g\langle D_2\rangle g^{-1}\cap \langle D_1\rangle$ has
finite index in $g\langle D_2\rangle g^{-1}$ and $g^{-1}d\langle U_3\rangle
d^{-1}g$ has finite index in $\langle D_2\rangle$. Thus, for $u$ the minimal
length double coset representative of $\langle D_2\rangle
g^{-1}d\langle U_3\rangle$,   $u\langle U_3\rangle u^{-1}$ has finite index in $\langle
D_2\rangle$. 

Since $E_W(U_3)=U_3$, lemma \ref{LFin} implies $U_3=uU_3u^{-1}\subset D_2$.
Hence $\langle U_3\rangle$ has finite index in $\langle
U_2\rangle$. By proposition \ref{Index}, $\langle
U_2\rangle=\langle U_3\rangle \times C$ for $C$ a finite subgroup
of $\langle lk_2(U_3)\rangle$. If $s\in U_2-U_3$ then as
$U_2\subset U_3\cup lk_2(U_3)$, $s\in lk_2(U_3)$. Hence $\langle
U_2\rangle =\langle U_3\rangle \times \langle U_2-U_3\rangle$. As
$\langle U_3\rangle$ has finite index in $\langle U_2\rangle$,
$\langle U_2-U_3\rangle$ is finite. By the definition of $U_2$,
$U_2=U_3$ and so $U_2$ is a proper subset of $U_1$.
\end{proof}

We can now easily recognize separating special subgroups in
$M(W)$.

\begin{corollary}\label{C7} 
Suppose $(W,S)$ is a Coxeter system and $C\subset S$ separates
$\Gamma (W,S)$. Then $\langle C\rangle \in M(W)$ iff there is no
$D\subset S$ such that $D$ separates $\Gamma (W,S)$ and $E_W(D)$ is a
proper subset of $E_W(C)$.
\end{corollary}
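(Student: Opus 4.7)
The plan is to prove both implications by contrapositive, using Proposition \ref{L24N}, the direct-product decomposition $\langle A \rangle = \langle E_W(A) \rangle \times \langle T_W(A) \rangle$ from Remark \ref{Split}, and the intersection formula of Lemma \ref{Kil}.

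For the forward direction, I would suppose that some $D \subset S$ separates $\Gamma(W,S)$ with $E_W(D) \subsetneq E_W(C)$ and show that $\langle D \rangle$---a splitting subgroup via the visual splitting at $D$---is smaller than $\langle C \rangle$, contradicting $\langle C \rangle \in M(W)$. Lemma \ref{Kil} with $w = 1$ gives $\langle D \rangle \cap \langle C \rangle = \langle D \cap C \rangle$; since $E_W(D) \subset E_W(C) \subset C$ and $T_W(D)$ commutes with $E_W(D)$, this intersection factors as $\langle E_W(D) \rangle \times \langle T_W(D) \cap C \rangle$ with finite second factor, and so has finite index in $\langle D \rangle$. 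The key claim is that $\langle E_W(D) \rangle$ has infinite index in $\langle E_W(C) \rangle$: if not, then Proposition \ref{Index} applied within the Coxeter system $(\langle E_W(C) \rangle, E_W(C))$, together with Lemma \ref{Tits} exactly as in the parenthetical of Proposition \ref{Index}, would yield $\langle E_W(C) \rangle = \langle A_0 \rangle \times \langle E_W(C) - A_0 \rangle$ with $A_0 \subset E_W(D)$ and finite second factor, contradicting the maximality built into the definition of $E_W(C)$. Combined with the finiteness of $\langle T_W(C) \rangle$ and $\langle T_W(D) \cap C \rangle$, this forces infinite index in $\langle C \rangle$.

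For the reverse direction, I would argue contrapositively: if $\langle C \rangle \notin M(W)$, then since the visual splitting of $W$ over $\langle C \rangle$ is non-trivial, some splitting subgroup $K$ of $W$ is smaller than $\langle C \rangle$. Applying Theorem \ref{MT1} to a $K$-splitting yields a visual graph of groups decomposition with an edge group $\langle D' \rangle$ for some $D' \subset S$; in particular $D'$ separates $\Gamma$, and a conjugate of $\langle D' \rangle$ lies inside $K$. The strict-descent argument from the proof of Proposition \ref{L24N}---specifically the chain establishing $U_3 \subset U_1 \cap U_2$, $U_3 \subsetneq U_1$, and $U_2 = U_3$---applies verbatim with its $D_1$, $C_1$, $D_2$ replaced by my $C$, $K$, $D'$, yielding $E_W(D') \subsetneq E_W(C)$ and contradicting the hypothesis. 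The main obstacle is the infinite-index claim in the forward direction; once it is established the rest is direct-product bookkeeping, and the reverse direction is essentially a citation of the descending-$|E_W|$ subargument already embedded in the proof of Proposition \ref{L24N}.
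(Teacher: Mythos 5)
Your proof is correct and takes essentially the same route as the paper: the forward direction is the paper's appeal to Proposition \ref{Index} (the content of Lemma \ref{properE}) to conclude $\langle E_W(D)\rangle$ has infinite index in $\langle E_W(C)\rangle$ and hence that $\langle D\rangle$ is smaller than $\langle C\rangle$, while the reverse direction rests on Proposition \ref{L24N}, which you use by re-running its internal descent argument (obtaining $E_W(D')\subsetneq E_W(C)$ directly) where the paper instead cites its statement together with Lemma \ref{LFin} and deduces properness from non-minimality. The details you compress (extracting a visual finite complement from Proposition \ref{Index}, and the non-trivial visual edge coming from Theorem \ref{MT1}) are the same ones the paper leaves implicit, so there is no genuine gap.
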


\begin{proof}
If $\langle C\rangle \in M(W)$, $D\subset S$ such that $D$ separates $\Gamma$ and $E_W(D)$ is a proper subset of $E_W(C)$, then by proposition
\ref{Index}, $\langle E_W(D)\rangle$ has infinite index in
$\langle E_W(C)\rangle$. But then $\langle D\rangle \cap \langle C\rangle$ has finite index in $\langle D\rangle$ and infinite index in $\langle C\rangle$, contrary to the assumption $\langle C\rangle \in M(W)$. 

If $\langle C\rangle \not\in M(W)$, then by proposition \ref{L24N}, there is $D\subset S$ and $w\in W$ such that $\langle D\rangle \in M(W)$, $D$ separates $\Gamma$, and $w\langle E_W(D)\rangle w^{-1}\subset \langle C\rangle$. By lemma \ref {LFin}, $E_W(D)\subset E_W(C)$. Since $\langle C\rangle\not \in M(W)$, $ E_W(D)$ is a proper subset of $E_W(C)$.
\end{proof}

\begin{theorem} \label{T1N} 
Suppose $(W,S)$ is a finitely generated Coxeter system,
$\Lambda$ is a reduced graph of groups decomposition for $W$ with
each edge group a minimal splitting subgroup of $W$, and $\Psi$ is
a reduced graph of groups decomposition of $W$ such that each vertex group of $\Psi$ is conjugate to a subgroup of a vertex group of $\Lambda$ and for each edge $E$ of $\Lambda$, there is an edge $D$ of $\Psi$ such that $\Psi(D)$ is conjugate to a subgroup of $\Lambda(E)$. (E.g. if $\Psi$ is a visual decomposition from $\Lambda$.)
If $A$ is a vertex of
$\Lambda$, and $\Phi _A$ is the reduced decomposition of $\Lambda
(A)$ given by the action of $\Lambda (A)$ on the Bass-Serre tree
for $\Psi$, then

1) For each edge $E$ of $\Lambda$ adjacent to $A$, $\Lambda
(E)\subset a\Phi_A(K)a^{-1}$, for some $a\in \Lambda (A)$ and
some vertex $K$ of $\Phi_A$. In particular, the decomposition
$\Phi_A$ is compatible with $\Lambda$.

2) Each vertex group of $\Phi_A$ is conjugate to a vertex group of
$\Psi$ (and so is Coxeter), or is $\Lambda (A)$-conjugate to
$\Lambda (E)$ for some edge $E$ adjacent to $A$.

3) If each edge group of $\Psi$ is in $M(W)$, then each edge group of $\Phi_A$ is a minimal splitting subgroup of $W$.
\end{theorem}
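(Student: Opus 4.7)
The plan is to first establish that each edge group $\Lambda(E)$ with $E$ adjacent to $A$ fixes a vertex of the Bass-Serre tree $T$ of $\Psi$, and then to deduce all three conclusions from this. By hypothesis some conjugate $g\Psi(D)g^{-1}$ of an edge group of $\Psi$ is contained in $\Lambda(E)$; since $\Psi(D)$ is a splitting subgroup of $W$ and $\Lambda(E)\in M(W)$, minimality forces $g\Psi(D)g^{-1}$ to have finite index in $\Lambda(E)$ (otherwise it would be smaller than $\Lambda(E)$). Because $g\Psi(D)g^{-1}$ fixes an edge of $T$, the group $\Lambda(E)$ has a finite orbit in $T$, and the standard center-of-finite-tree argument (using that $W$ acts on $T$ without inversions) produces a fixed vertex. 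Since $\Lambda(E)\subset\Lambda(A)$ preserves the minimal $\Lambda(A)$-invariant subtree $T_A$, it also fixes the tree-projection into $T_A$ of any fixed vertex. Writing this projection as $a\cdot K$ for a vertex $K$ of $\Phi_A$ and $a\in\Lambda(A)$ gives $\Lambda(E)\subset a\Phi_A(K)a^{-1}$, which is Part 1 and in particular yields compatibility of $\Phi_A$ with $\Lambda$.

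For Part 2, I would invoke Theorem \ref{artificial} applied to $\Lambda$ and $\Psi$: each vertex group of $\Phi_A$ is either (a) conjugate to a vertex group of $\Psi$, or (b) contained in $v\Lambda(E)v^{-1}$ for some $v\in\Lambda(A)$ and some edge $E$ of $\Lambda$ adjacent to $A$. In case (b), Part 1 applied to this $\Lambda(E)$ gives $\Lambda(E)\subset a\Phi_A(K')a^{-1}$ for some vertex $K'$, hence $\Phi_A(K)\subset (va)\Phi_A(K')(va)^{-1}$. Since $\Phi_A$ is a tree by Corollary \ref{CZ} and is reduced, Lemma \ref{Sub} forces $K=K'$ and $(va)^{-1}\in\Phi_A(K)$, which collapses the conjugation and hence all three inclusions to equalities. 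In particular $\Phi_A(K)=v\Lambda(E)v^{-1}$ is a $\Lambda(A)$-conjugate of $\Lambda(E)$, as required.

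For Part 3, each edge group of $\Phi_A$ has the form $C=\Lambda(A)\cap g\Psi(D)g^{-1}$ for some edge $D$ of $\Psi$ and $g\in W$. Since $\Phi_A$ is compatible with $\Lambda$ by Part 1, refining $\Lambda$ by replacing $A$ with $\Phi_A$ produces a graph of groups decomposition of $W$ carrying $C$ as an edge group, so $W$ splits non-trivially over $C$. Assuming that each edge group of $\Psi$ lies in $M(W)$, the minimality of $g\Psi(D)g^{-1}$ together with $C\subset g\Psi(D)g^{-1}$ forces $C$ to have finite index in $g\Psi(D)g^{-1}$; a short index chase then shows that any splitting subgroup of $W$ smaller than $C$ would also be smaller than $g\Psi(D)g^{-1}$, contradicting its minimality. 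The principal subtlety throughout is Part 2: upgrading the containment provided by Theorem \ref{artificial} to an equality requires the reverse inclusion from Part 1, which is itself made possible only by the minimality hypothesis $\Lambda(E)\in M(W)$ and the resulting finite-index control over $g\Psi(D)g^{-1}$.
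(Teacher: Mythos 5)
Your proposal is correct and follows essentially the same route as the paper: minimality forces the conjugate of $\Psi(D)$ to have finite index in $\Lambda(E)$, whence $\Lambda(E)$ fixes a vertex of $T_\Psi$ (you prove the bounded-orbit fact inline where the paper cites Corollary 4.8 of Dicks--Dunwoody), Part 2 is the same combination of Theorem \ref{artificial} with Lemma \ref{Sub}, and Part 3 is the compatibility-plus-index argument the paper leaves implicit.
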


\begin{proof}
Suppose $E$ is an edge of $\Lambda$ adjacent to $A$. 
By hypothesis, there is an edge $D$ of $\Psi$ and $w\in W$
such that $w\Psi(D)w^{-1}\subset \Lambda (E)$. Since $\Lambda (E)$
is minimal, $\Psi(D)$ has finite index in $w^{-1}\Lambda(E)w$ and
so corollary 4.8 of \cite{DicksDunwoody} implies $\Lambda (E)$
stabilizes a vertex of $T_{\Psi}$, the Bass-Serre tree for
$\Psi$. Thus $\Lambda (E)$ is a subgroup of $a\Phi_A(K)a^{-1}$,
for some vertex $K$ of $\Phi _A$, and some $a\in \Lambda(A)$. Part
1) is proved.

By theorem \ref{artificial}, each vertex group of $\Phi_A$ is either
conjugate to a vertex group of $\Psi$ or $\Lambda(A)$-conjugate to
a subgroup of an edge group $\Lambda (E)$, for some edge $E$ of
$\Lambda$ adjacent to $A$. Suppose $Q$ is a vertex of $\Phi_A$ and
$a_1\Phi_A(Q)a_1^{-1}\subset \Lambda (E)$ for some $a_1\in
\Lambda (A)$. By part 1), $\Lambda (E)\subset
a_2\Phi_A(K)a_2^{-1}$, for some $a_2\in \Lambda(A)$ and $K$ a
vertex of $\Phi_A$. Thus, $a_1\Phi_A(Q)a_1^{-1}\subset \Lambda
(E)\subset a_2\Phi_A(K)a_2^{-1}$.  Lemma \ref{Sub} implies $Q=K$ and $a_2^{-1}a_1\in \Phi_A(Q)$, so
$\Phi_A(Q)=a_2^{-1}\Lambda (E)a_2$ and part 2) is proved.

By part 1) $W$ splits non-trivially over each edge group of $\Phi_A$ and part 3) follows.
\end{proof}

\begin{proposition} \label{P2N} 
Suppose $(W,S)$ is a finitely generated Coxeter system, $\Lambda$ is a reduced graph of groups decomposition of $W$ and $E$ is an edge of $\Lambda$ such that $\Lambda (E)$ is conjugate to a group in $K(W,S)$. Then there is $Q\subset S$ such that a conjugate of $\langle Q\rangle$ is a subgroup of a vertex group of $\Lambda$ and a conjugate of $\Lambda (E)$ has finite index in $\langle Q\rangle$. \end{proposition}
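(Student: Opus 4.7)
The plan is to unpack the definition of $K(W,S)$ and then apply a standard fixed-point argument on the Bass--Serre tree $T_\Lambda$ of $\Lambda$. By hypothesis, some conjugate $g\Lambda(E)g^{-1}$ equals a group $\langle A\rangle \times M \in K(W,S)$, where $A\subset S$ and $M$ is a subgroup of some finite special subgroup $\langle D\rangle$ of $\langle lk_2(A)\rangle$ (with $D\subset S-A$). Take $Q = A\cup D$. Because $D$ commutes elementwise with $A$ and $A\cap D = \emptyset$, we have $\langle Q\rangle = \langle A\rangle \times \langle D\rangle$; and because $M\subset \langle D\rangle$ with both $M$ and $\langle D\rangle$ finite, the conjugate $g\Lambda(E)g^{-1} = \langle A\rangle \times M$ has finite index in $\langle Q\rangle$. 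This immediately gives the ``finite index'' clause of the proposition.

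What remains is to show that some conjugate of $\langle Q\rangle$ lies in a vertex group of $\Lambda$, and for this I would pass to the action of $W$ on $T_\Lambda$. The edge of $T_\Lambda$ stabilized by $g\Lambda(E)g^{-1}$ has endpoints fixed by $\langle A\rangle \times M$, so the fixed-point set $F$ of $\langle A\rangle$ in $T_\Lambda$ is a nonempty subtree (the fixed set of any group action on a tree is a subtree, since two fixed vertices force the unique geodesic between them to be pointwise fixed). Since $\langle D\rangle$ commutes elementwise with $\langle A\rangle$, it preserves $F$: for $a\in\langle A\rangle$, $d\in\langle D\rangle$, and $v\in F$, $a(dv)=d(av)=dv$, so $dv\in F$. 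Because $\langle D\rangle$ is finite and the Bass--Serre action is without inversions, the classical fixed-point theorem for a finite group acting on a tree (in the spirit of the Dicks--Dunwoody reference used in the proof of Theorem \ref{T1N}) provides a vertex $\hat v\in F$ fixed by $\langle D\rangle$. Then $\hat v$ is fixed by $\langle Q\rangle = \langle A\rangle\times\langle D\rangle$, so $\langle Q\rangle$ is contained in $\mathrm{stab}_W(\hat v) = h\Lambda(V')h^{-1}$ for some $h\in W$ and vertex $V'$ of $\Lambda$; equivalently $h^{-1}\langle Q\rangle h\subset \Lambda(V')$.

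The only genuine content lies in the commuting-action step and the appeal to Serre's fixed-point theorem; both are essentially classical. All remaining pieces --- the direct-product identification $\langle A\cup D\rangle = \langle A\rangle\times\langle D\rangle$, the finite index of $M$ in $\langle D\rangle$, and the Bass--Serre identification of vertex stabilizers with conjugates of vertex groups of $\Lambda$ --- are immediate from the definition of $K(W,S)$ and the elementary Coxeter-and-tree material already established in Sections 2 and 3.
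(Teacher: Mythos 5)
Your proof is correct, and it reaches the paper's key intermediate goal --- that $\langle A\cup D\rangle$ (the paper's $\langle B\cup D\rangle$) stabilizes a vertex of the Bass--Serre tree $T_\Lambda$ --- by a different mechanism. The paper argues by induction on the generators of the $\langle A\rangle$-factor: it takes the largest $i$ for which $\langle D\cup\{b_1,\ldots,b_i\}\rangle$ fixes a vertex, notes that $\langle D\cup\{b_{i+1}\}\rangle$ is finite and hence fixes a vertex, that $\langle B\rangle$ fixes a vertex (both being inside the conjugate of $\Lambda(E)$ or finite), and then uses the median property of trees --- a vertex common to the three geodesics joining the three fixed vertices --- to get a vertex fixed by $\langle D\cup\{b_1,\ldots,b_{i+1}\}\rangle$, contradicting maximality of $i$. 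You instead observe that the fixed-point set of $\langle A\rangle$ in $T_\Lambda$ is a nonempty subtree, that the elementwise-commuting finite group $\langle D\rangle$ leaves it invariant, and then invoke the classical fixed-point theorem for finite groups acting on trees to produce a common fixed vertex. Both arguments are elementary Bass--Serre considerations; yours is more structural and shorter (commuting actions plus Serre's fixed-point theorem), while the paper's is self-contained, using only the Helly/median property of trees and the finiteness of $\langle D\cup\{b_{i+1}\}\rangle$. The remaining bookkeeping in your write-up (taking $Q=A\cup D$, the product decomposition $\langle Q\rangle=\langle A\rangle\times\langle D\rangle$, the finite index of $\langle A\rangle\times M$ in $\langle Q\rangle$, and the identification of vertex stabilizers with conjugates of vertex groups) matches the paper and is sound.
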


\begin{proof} The group $\Lambda (E)$ is conjuate to $\langle B\rangle \times F$ for $B\subset S$ and $F\subset \langle D\rangle$ where $D\subset lk_2(B)$ and $\langle D\rangle$ is finite. Let $T_{\Lambda}$ be the Bass-Serre tree for $\Lambda$ and set $B=\{b_1,\ldots ,b_n\}$. It suffices to show that $\langle B\cup D\rangle$ stabilizes a vertex of $T_{\Lambda}$.  Otherwise, let $i\in \{0,1,\ldots , n-1\}$ be large as possible so that $\langle D\cup \{b_1,\ldots ,b_i\}\rangle$ stabilizes a vertex of $T_{\Lambda}$. As $\langle D\cup \{b_{i+1}\}\rangle$ is finite, it stabilizes some vertex $V_1$ of $T_{\Lambda}$. The group $\langle B\rangle$ stabilizes a vertex $V_2$ of $T_{\Lambda}$ and $\langle D\cup \{b_1,\ldots ,b_{i}\}\rangle$ stabilizes a vertex $V_3$ of $T_{\Lambda}$. Since $T_{\Lambda}$ is a tree, there is a vertex $V_4$ common to the three $T_{\Lambda}$-geodesics connecting pairs of vertices in $\{V_1,V_2,V_3\}$. Then $\langle D\cup \{b_1,\ldots ,b_{i+1}\rangle $ stabilizes $V_4$, contrary to the minimality of $i$. Instead, $\langle D\cup B\rangle$ stabilizes a vertex of $T_{\Lambda}$. 
\end{proof}

The next result combines theorem \ref{T1N} and proposition \ref{P2N} to show that any graph of groups decomposition of a Coxeter group with edge groups equal to minimal splitting subgroups of the Coxeter group is, up to ``artificial considerations", visual.

\begin{proposition} \label{P3N} 
Suppose $(W,S)$ is a finitely generated Coxeter system, $\Lambda$ is a reduced graph of groups decomposition for $W$ with each edge group a minimal splitting subgroup of $W$, and $\Psi$ is a reduced visual decomposition from $\Lambda$. If $\Phi'$ is the graph of groups obtained from $\Lambda$ by replacing each vertex $A$ of $\Lambda$ by $\Phi_A$, the graph of groups decomposition of $\Lambda(A)$ given by the action of $\Lambda(A)$ on the Bass-Serre tree for $\Psi$, and $\Phi$ is obtained by reducing $\Phi'$, then there is a bijection $\tau$, from the vertices of $\Phi$ to those of $\Psi$ so that for each vertex $V$ of $\Phi$, $\Psi(\tau (V))$ is conjugate to $\Phi(V)$.
\end{proposition}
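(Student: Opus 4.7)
The plan is to apply Theorem \ref{T1N} to classify the vertex groups of each $\Phi_A$, analyze the reduction $\Phi'\to\Phi$ to show every surviving vertex group is conjugate to a vertex group of $\Psi$, and then use Lemma \ref{Sub} to force a unique bijection.

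First, Theorem \ref{T1N} applied to each vertex $A$ of $\Lambda$ (with the visual decomposition $\Psi$) gives that each vertex group of $\Phi_A$ is either (i) conjugate in $W$ to a vertex group of $\Psi$, or (ii) $\Lambda(A)$-conjugate to $\Lambda(E)$ for some edge $E$ of $\Lambda$ adjacent to $A$. Moreover, the proof of part (2) there shows that any type-(ii) vertex $K$ of $\Phi_A$ is precisely the attachment vertex of such an $E$ in $\Phi'$, and that $\Phi_A(K)$ equals $a^{-1}\Lambda(E)a$ for some $a\in\Lambda(A)$, making the inclusion of $\Lambda(E)$ into $\Phi_A(K)$ an isomorphism in $\Phi'$.

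Second, examine the reduction $\Phi'\to\Phi$. Wherever an edge $E$ of $\Lambda$ in $\Phi'$ has a pure type-(ii) endpoint (one whose group is not also conjugate to a vertex group of $\Psi$), the edge is degenerate on that side and can be collapsed across that vertex. Reducedness of each $\Phi_A$ ensures that no new collapses are forced by the edges of $\Phi_A$ transferred onto the opposite endpoint, since every such edge group is strictly smaller than $\Phi_A(K)\cong\Lambda(E)$. One verifies that after all such collapses the only remaining vertex groups are type-(i); the potentially troublesome case of an edge $E$ with pure type-(ii) vertices at both ends forces $\Lambda(E)$ itself to be $W$-conjugate to a vertex group of $\Psi$, so the type-(ii) vertex is simultaneously type-(i) and no contradiction occurs.

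Third, define $\tau:V(\Phi)\to V(\Psi)$ by sending each $V$ to the unique vertex of $\Psi$ whose group is $W$-conjugate to $\Phi(V)$. Uniqueness of $\tau(V)$ and injectivity of $\tau$ both follow from Lemma \ref{Sub} applied to the reduced tree decompositions $\Psi$ and $\Phi$ respectively, since distinct vertex groups in a reduced tree decomposition of $W$ cannot be $W$-conjugate. For surjectivity, given a vertex group $V^*$ of $\Psi$, the moreover clause of Theorem \ref{MT1} yields $V^*\subset g\Lambda(A)g^{-1}$ for some $g\in W$ and some vertex $A$ of $\Lambda$; then $g^{-1}V^*g\subset\Lambda(A)$, and the corresponding vertex of the Bass-Serre tree $T_\Psi$ has $\Lambda(A)$-stabilizer equal to $g^{-1}V^*g$, giving a type-(i) vertex of $\Phi_A$ that survives the reduction and is mapped to $V^*$.

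The main obstacle is the second step---verifying that every pure type-(ii) vertex is eliminated by the reduction---which requires careful bookkeeping in the case analysis for edges of $\Lambda$ with type-(ii) endpoints on both sides and a proof that this double-type-(ii) scenario forces $\Lambda(E)$ to coincide (up to conjugation) with a vertex group of $\Psi$.
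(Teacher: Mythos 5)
Your first and third steps track the paper's argument (Theorem \ref{T1N} to classify vertex groups of each $\Phi_A$, then Lemma \ref{Sub} for uniqueness, injectivity and surjectivity of $\tau$), but your second step — the elimination of the ``pure type-(ii)'' vertices — is exactly where the real content lies, and you have not supplied it. You acknowledge this as ``the main obstacle,'' and the resolution you sketch (that an edge of $\Lambda$ with pure type-(ii) attachment vertices at both ends forces $\Lambda(E)$ to be conjugate to a vertex group of $\Psi$) is asserted, not proved, and is not obviously true. The difficulty is not only the double-ended case: after collapsing a degenerate edge of $\Lambda$, the absorbing vertex could itself have group equal to a conjugate of $\Lambda(E)$, and since each $\Phi_B$ is reduced, none of its own edges can collapse that vertex further; nothing in your argument rules out a cascade in which every vertex group of $\Phi'$ containing a conjugate of $\Lambda(E)$ is itself just a conjugate of $\Lambda(E)$, in which case a vertex with group conjugate to $\Lambda(E)$, not conjugate to any vertex group of $\Psi$, survives in $\Phi$ and $\tau$ cannot be defined there. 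A purely local bookkeeping of collapses cannot exclude this; some global input is needed.

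The paper supplies that input by splitting into cases according to whether $\Lambda(E)$ is conjugate to a special subgroup. If it is not, then (Lemma \ref{Minform}) $\Lambda(E)$ is conjugate to a group in $K(W,S)$, and Proposition \ref{P2N} produces $Q\subset S$ with a conjugate of $\langle Q\rangle$ inside a vertex group of $\Lambda$ and a conjugate of $\Lambda(E)$ of finite (hence, here, proper) index in $\langle Q\rangle$; by the ``moreover'' clause of Theorem \ref{MT1} this $\langle Q\rangle$ sits inside a vertex group of $\Psi$, hence (via the first part of the proof) inside a vertex group of some $\Phi_V$, so some vertex group of $\Phi'$ properly contains a conjugate of $\Lambda(E)$ and the type-(ii) vertex is eliminated in the reduction (Lemmas \ref{Sub} and \ref{Kequal} rule out its surviving alongside that larger vertex group). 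If $\Lambda(E)$ is conjugate to a special subgroup, the same ``moreover'' clause places it inside a vertex group of $\Psi$, and then either it is conjugate to that vertex group (so your type-(ii) vertex is simultaneously type-(i)) or it is again properly contained and eliminated. Your proposal never invokes Proposition \ref{P2N} or this specialness dichotomy, and without them the key elimination step — and hence the proposition — is not established. (Separately, in your surjectivity argument the phrase ``survives the reduction'' needs the same care: the paper avoids it by noting that every vertex group of $\Phi'$ is conjugate into a vertex group of $\Phi$ and then applying Lemma \ref{Sub}.)
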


\begin{proof} Part 1) of theorem \ref {T1N} implies the decomposition $\Phi$ is well-defined. If $Q$ is a vertex of $\Psi$ then a conjugate of $\Psi(Q)$ is a subgroup of  $\Lambda(B)$ for some vertex $B$ of $\Lambda$, and corollary 7 of \cite{MTVisual} (an elementary corollary of theorem \ref{artificial}) implies this conjugate of $\Psi(Q)$ is a vertex group of $\Phi_B$. Hence each vertex group of $\Psi$ is conjugate to a vertex group of $\Phi'$. Suppose $A$  is a vertex of $\Lambda$ and $U$ is a vertex of $\Phi_A$ such that $\Phi_A(U)$ is $\Lambda (A)$-conjugate to $\Lambda (E)$ for some edge $E$ adjacent to $A$. If $\Lambda (E)$ is not conjugate to a special subgroup of $(W,S)$, then as $\Lambda (E)$ is conjugate to a group in $K(W,S)$, proposition \ref{P2N} implies there is a vertex $V$ of $\Lambda$ and a vertex group of $\Phi_V$ properly containing a conjugate of $\Lambda (E)$. Hence $\Phi_A(U)$ is eliminated by reduction when $\Phi$ is formed. If $\Lambda (E)$ is conjugate to a special subgroup of $(W,S)$, then as $\Lambda (E)$ is also conjugate to a subgroup of a vertex group of $\Psi$, either $\Lambda(E)$ is conjugate to a vertex group of $\Psi$ or $\Lambda(E)$ is eliminated by reduction when $\Phi$ is formed. Hence by part 2) of theorem \ref{T1N}, every vertex group of $\Phi$ is conjugate to a vertex group of $\Psi$. No two vertex groups of $\Psi$ are conjugate, so if $V$ is a vertex of $\Phi$, let $\tau (V)$ be the unique vertex of $\Psi$ such that $\Phi(V)$ is conjugate to $\Psi(\tau(V))$. As no two vertex groups of $\Phi$ are conjugate, $\tau$ is injective. If $Q$ is a vertex of $\Psi$, then as noted above $\Psi(Q)$ is conjugate to a vertex group of $\Phi'$ and so $\Psi(Q) \subset w\Phi(V)w^{-1}$ for some $w\in W$ and $V$ a vertex of $\Phi$. Choose $x\in W$ such that $\Phi(V)=x\Psi(\tau(V))x^{-1}$. Then $\Psi(Q)\subset wx\Psi(\tau(V))x^{-1}w^{-1}$. Lemma \ref{Sub} implies $Q=\tau(V)$ and so $\tau$ is onto.
\end{proof}

In the previous argument it is natural to wonder if a vertex group of $\Psi$ might be conjugate to a vertex group of $\Phi_A$ and to a vertex group of $\Phi_B$ for $A$ and $B$ distinct vertices of $\Lambda$. Certainly such a group would be conjugate to an edge group of $\Lambda$. The next example show this can indeed occur.

\begin{example}
Consider the Coxeter presentation $\langle a,b,c,d : \  a^2=b^2=c^2=d^2=1\rangle$. Define $\Lambda$ to be the graph of groups decomposition $\langle a,cdc\rangle\ast_{\langle cdc\rangle}\langle b,cdc\rangle\ast \langle d\rangle$. Then $\Lambda$ has graph with a vertex $A$ and $\Lambda (A)=\langle a,cdc\rangle$, edge $C$ with $\Lambda(C)=\langle cdc\rangle$ vertex $B$ with $\Lambda(B)=\langle b,cdc\rangle$ edge $E$ with $\Lambda(E)$ trivial and vertex $D$ with $\Lambda(D)=\langle d\rangle$.
The visual decomposition for $\Lambda$ is $\Psi=\langle a\rangle\ast \langle b\rangle\ast \langle c\rangle\ast \langle d\rangle$, a graph of groups decomposition with each vertex group isomorphic to $\mathbb Z_2$ and each edge group trivial. Now $\Phi_A$ has decomposition $\langle a\rangle\ast \langle cdc\rangle$, $\Phi_B$ has decomposition $\langle b\rangle \ast \langle cdc\rangle$  and $\Phi_D$ has decomposition $\langle d\rangle$. Observe that the $\Psi$ vertex group $\langle d\rangle$ is conjugate to a vertex group of both $\Phi_A$ and $\Phi_B$. The group $\Phi$ of the previous theorem would have decomposition $\langle a\rangle\ast \langle b\rangle\ast \langle c\rangle \ast \langle cdc\rangle$.  
\end{example}

\begin{lemma} \label{Kequal} 
Suppose $(W,S)$ is a finitely generated Coxeter system and $C$ is  a subgroup of $W$ conjugate to a group in $K(W,S)$. If $D$ is a subgroup of $W$ and $wDw^{-1}\subset C\subset D$ for some $w\in W$, then $wDw^{-1}=C=D$.
\end{lemma}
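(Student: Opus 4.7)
The plan is to show that $w$ normalizes $C$; once $wCw^{-1}=C$ is established, combining it with $wDw^{-1}\subset C\subset D$ forces $D\subset w^{-1}Cw=C$, hence $D=C=wDw^{-1}$.

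First I would make two routine reductions. Since the hypothesis $wDw^{-1}\subset C\subset D$ is invariant under simultaneously conjugating $C$, $D$, and $w$ by the same element, we may assume $C=\langle A\rangle\times M$ already lies in $K(W,S)$, with $A\subset S$ and $M\subset\langle D_0\rangle$ for some $D_0\subset lk_2(A)$ generating a finite visual subgroup. Using the decomposition $\langle A\rangle=\langle E_W(A)\rangle\times\langle T_W(A)\rangle$, one can absorb $\langle T_W(A)\rangle$ into $M$ (replacing $D_0$ by $T_W(A)\cup D_0$, which still lies in $lk_2(E_W(A))$ and still generates a finite visual subgroup since $T_W(A)$ commutes with $D_0$). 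After relabeling we may assume $E_W(A)=A$. The degenerate subcase $A=\emptyset$ (so $C=M$ is finite) is dispatched immediately: conjugation forces $D$ finite with $|D|=|wDw^{-1}|\le|C|\le|D|$, giving all equalities.

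The main technical step is to show $w\langle A\rangle w^{-1}=\langle A\rangle$. The hypothesis forces $wCw^{-1}\subset C$, so in particular $w\langle A\rangle w^{-1}\subset C\subset\langle B\rangle$ for $B=A\cup D_0$ (where $\langle B\rangle=\langle A\rangle\times\langle D_0\rangle$ because $D_0\subset lk_2(A)$). I would apply Lemma \ref{LFin} with this $B$, with $g=w$, and with shortest double coset representative $u$ of $\langle B\rangle w\langle A\rangle$; this gives $uAu^{-1}\subset B$ and, crucially, $lett(u)\subset lk_2(E_W(A))=lk_2(A)$. Hence every letter of $u$ commutes with every element of $A$, so $u\langle A\rangle u^{-1}=\langle A\rangle$. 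Writing $w=bua$ with $b\in\langle B\rangle$ and $a\in\langle A\rangle$ then gives $w\langle A\rangle w^{-1}=b\langle A\rangle b^{-1}$, which equals $\langle A\rangle$ because $\langle A\rangle$ is a direct factor (hence normal) in $\langle B\rangle$.

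A finite-index count now upgrades this to $wCw^{-1}=C$. Since $\langle A\rangle=w\langle A\rangle w^{-1}\subset wCw^{-1}\subset C$, the tower law gives $[C:\langle A\rangle]=[C:wCw^{-1}]\cdot[wCw^{-1}:\langle A\rangle]$; but $[C:\langle A\rangle]=|M|$ and conjugation yields $[wCw^{-1}:\langle A\rangle]=[wCw^{-1}:w\langle A\rangle w^{-1}]=[C:\langle A\rangle]=|M|$, so $[C:wCw^{-1}]=1$ and $wCw^{-1}=C$. The conclusion follows as in the opening paragraph. The main obstacle I anticipate is the Lemma \ref{LFin} step, which is where the structural hypothesis $C\in K(W,S)$ is fully consumed: one needs simultaneously that $C$ embeds into a visual group of the form $\langle A\rangle\times\langle D_0\rangle$ with the finite factor lying in $lk_2(A)$, and the reduction $E_W(A)=A$, to force $lett(u)\subset lk_2(A)$ and thereby push $u$ into the centralizer of $\langle A\rangle$.
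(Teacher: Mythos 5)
Your argument is correct and follows essentially the same route as the paper's proof: the same normalization of $C$ to the form $\langle A\rangle\times M$ with $E_W(A)=A$, the same application of Lemma \ref{LFin} to the shortest double coset representative to show the conjugation preserves $\langle A\rangle$, and the same finite-index count to conclude $wCw^{-1}=C=D$.
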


\begin{proof} Conjugating we may assume $C=\langle U\rangle\times F$, for $U\subset S$, $E_W(U)=U$ and $F$ a finite group.  Let $K\subset lk_2(U)$ such that $\langle K\rangle$ is finite and $F\subset \langle K\rangle$. Now, $w\langle U\rangle w^{-1}\subset wCw^{-1}\subset wDw^{-1}\subset C\subset \langle U\cup K\rangle$. Write $w=xdy$ for $x\in \langle U\cup K\rangle$, $y\in \langle U\rangle$, and $d$ the minimal length double coset representative of $\langle U\cup K\rangle w\langle U\rangle$.  Then $dCd^{-1}\subset dDd^{-1}\subset x^{-1}Cx$. By lemma \ref{LFin},  $dUd^{-1}=U$ and by the definition of $x$, $x^{-1}\langle U\rangle x=\langle U\rangle$.  The index of $\langle U\rangle$ in $dCd^{-1}$ is $\vert F\vert $ and the index of $\langle U\rangle$ in $x^{-1}Cx$ is $\vert F\vert$. Hence $dCd^{-1}=dDd^{-1}=x^{-1}Cx$ and $wCw^{-1}=wDw^{-1}=C$. 
\end{proof}

\begin{remark} 
The argument in the first paragraph below shows that if $\Lambda$ is a reduced graph of groups decomposition of a Coxeter group $W$, $V$ is a vertex of $\Lambda$ and $\Phi$ is a reduced graph of groups decomposition of $\Lambda (V)$, compatible with $\Lambda$ then when replacing $V$ by $\Phi$ to form $\Lambda_1$, no vertex group of $\Phi$ is $W$-conjugate to a subgroup of another vertex group of $\Phi$. In particular, each edge of $\Phi$ survives reduction in $\Lambda_1$.
\end{remark}

\begin{proposition} \label{Kreduce} 
Suppose $(W,S)$ is a finitely generated Coxeter system and $\Lambda$ is a reduced graph of groups decomposition of $W$ with $M(W)$ edge groups. Suppose a vertex group of $\Lambda$ splits nontriviall and compatibly as $A\ast _CB$ over an $M(W)$ group $C$. Then there is a group in $ K(W,S)$ contained in a conjugate of $B$ which is not also contained in a conjugate of $A$ (and then also with $A$ and $B$ reversed).
\end{proposition}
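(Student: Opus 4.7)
The plan is to refine $\Lambda$ at $V$ by the splitting $A\ast_C B$ to form $\Lambda_1$, reduce to $\tilde\Lambda_1$, apply theorem \ref{MT1} to obtain a visual decomposition $\Psi$, and then invoke Theorem \ref{T1N} at the vertex $V$ (with respect to $\Lambda$ and $\Psi$) to produce a compatible refined decomposition $\Phi_V$ of $\Lambda(V)=A\ast_C B$ whose vertex groups are conjugate to groups in $K(W,S)$. A $B$-side vertex of $\Phi_V$ will provide the desired element of $K(W,S)$, and its failure to be $W$-conjugate into $A$ will follow from the non-conjugacy of $A$ and $B$ together with Lemma \ref{Kequal}. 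The symmetric statement with $A$ and $B$ reversed follows by the same argument.

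Concretely, first use the remark preceding this proposition to deduce that in $\Lambda_1$ the new edge $e_{new}$ (joining the two new vertices with groups $A$ and $B$ via edge group $C$) survives any reduction, and that neither $A$ nor $B$ is $W$-conjugate to a subgroup of the other. Let $\tilde\Lambda_1$ denote the reduction. Apply theorem \ref{MT1} to $\tilde\Lambda_1$ to obtain a reduced visual $\Psi$; Lemma \ref{L16N} ensures each edge group of $\Psi$ lies in $M(W)$. The hypotheses of Theorem \ref{T1N} hold for the pair $(\Lambda,\Psi)$ since $\Psi$-vertex groups are contained in conjugates of $\tilde\Lambda_1$-vertex groups (hence of $\Lambda$-vertex groups), and each $\Lambda$-edge corresponds through $\tilde\Lambda_1$ to a $\Psi$-edge with subgroup-inclusion up to conjugation (by the remark after theorem \ref{MT1}). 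Applying Theorem \ref{T1N} at $V$ yields a reduced decomposition $\Phi_V$ of $\Lambda(V)=A\ast_C B$, compatible with $\Lambda$, whose vertex groups are each either (a) $W$-conjugate to a $\Psi$-vertex group, a special subgroup and thus an element of $K(W,S)$, or (b) $\Lambda(V)$-conjugate to an edge group of $\Lambda$ adjacent to $V$, a member of $M(W)$ and therefore conjugate to a $K(W,S)$-group by Lemma \ref{Minform}.

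The geometric crux is that $\Phi_V$ refines the splitting $A\ast_C B$ of $\Lambda(V)$: the $\Lambda(V)$-action on $T_\Psi$ (yielding $\Phi_V$) maps equivariantly onto the $\Lambda(V)$-action on the Bass-Serre tree of $A\ast_C B$ via $T_\Psi\to T_{\tilde\Lambda_1}$, so vertices of $\Phi_V$ partition into an $A$-side and a $B$-side joined by edges projecting onto $e_{new}$. Pick a $B$-side vertex $U_B$ of $\Phi_V$ adjacent to such an edge; after $\Lambda(V)$-conjugation we may assume $\Phi_V(U_B)\subset B$ and that $\Phi_V(U_B)$ strictly contains a $W$-conjugate of $C$ (using reducedness of $\Phi_V$). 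Let $K\in K(W,S)$ be $W$-conjugate to $\Phi_V(U_B)$; then $K$ has a $W$-conjugate in $B$. If, for contradiction, $hKh^{-1}\subset A$ for some $h\in W$, then $\Phi_V(U_B)\subset B\cap gAg^{-1}$ for some $g\in W$, so $\Phi_V(U_B)$ stabilizes both $v_{B^*}$ and $g\cdot v_{A^*}$ in $T_{\tilde\Lambda_1}$; Lemma \ref{Sub} places these in distinct $W$-orbits, forcing the connecting path to traverse some edge $x_0\cdot e_{new}$ in the orbit of $e_{new}$, and thereby $\Phi_V(U_B)\subset x_0 C x_0^{-1}$. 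Combined with the conjugate of $C$ sitting inside $\Phi_V(U_B)$, Lemma \ref{Kequal} (using that both $\Phi_V(U_B)$ and $x_0 C x_0^{-1}$ are conjugate to groups in $K(W,S)$, via Lemma \ref{Minform} for the latter) forces $\Phi_V(U_B)$ to coincide with the conjugate of $C$ it contains, contradicting the strict containment. The main obstacle is the careful justification of the refinement claim for $\Phi_V$ through the interplay of the various Bass-Serre trees, together with the confirmation that $U_B$ can be chosen so that $\Phi_V(U_B)$ strictly contains a conjugate of $C$ so that the Lemma \ref{Kequal} step produces a genuine contradiction.
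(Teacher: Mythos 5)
Your overall strategy (pass to a visual decomposition, use theorem \ref{T1N} to get vertex groups conjugate to $K(W,S)$-groups, and use lemma \ref{Kequal} to rule out conjugation into $A$) is the paper's strategy, but the central step of your argument has a genuine gap: the claim that you can ``pick a $B$-side vertex $U_B$ of $\Phi_V$ \ldots\ so that $\Phi_V(U_B)\subset B$ and $\Phi_V(U_B)$ strictly contains a $W$-conjugate of $C$'' is not justified by reducedness of $\Phi_V$, and it is false in general. Take $W=\langle a\rangle\ast\langle b\rangle\ast\langle c\rangle\ast\langle d\rangle$ (four copies of $\mathbb Z_2$), $\Lambda$ the trivial decomposition, and the compatible splitting $W=A\ast_CB$ with $A=\langle b,c,d\rangle$, $B=\langle a,cdc\rangle$, $C=\langle cdc\rangle\in M(W)$ (finite). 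Here $\tilde\Lambda_1=A\ast_CB$, the visual decomposition from it is $\Psi=\langle a\rangle\ast_{1}\langle b,c,d\rangle$, and $\Phi_V=\Psi$. The only vertex group of $\Phi_V$ contained in a conjugate of $B$ is $\langle a\rangle$, which contains no conjugate of $C$ at all (let alone strictly); the edge group of $\Phi_V$ at that vertex is trivial, not a conjugate of $C$, so reducedness of $\Phi_V$ gives you nothing. Your contradiction mechanism therefore cannot run: without a conjugate of $C$ sitting properly inside $\Phi_V(U_B)$, the conclusion $\Phi_V(U_B)\subset x_0Cx_0^{-1}$ obtained from the Bass--Serre path argument is not contradictory. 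Note the proposition's conclusion still holds in this example (witness $\langle a\rangle$), so it is your proof, not the statement, that breaks; and the reversed ($A$-side) claim fails for the same reason under the other labeling.

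This is precisely the degenerate case the paper's proof is built to handle: it works with $\Phi_B$, the induced decomposition of $B$ itself (not of $\Lambda(V)$), and when the $\Phi_B$-vertex $\tilde B$ adjacent to the edge $\bar C$ \emph{is} conjugate into $A$, lemma \ref{Kequal} forces $\Lambda_3(\tilde B)=C$, which is then collapsed; one then passes to a deeper vertex $K$ of $\Phi_B$ separated from $\bar A$ by an edge whose group is an edge group of $\Phi_B$ contained in $C$, and the final application of lemma \ref{Kequal} contradicts reducedness of $\Phi_B$ (a vertex group equal to an adjacent edge group), not a strict containment of a conjugate of $C$. Your proposal is missing this case analysis, and there is no easy patch within your setup, since (as the example shows) no vertex group of $\Phi_V$ on the relevant side need contain a conjugate of $C$. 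Two smaller points, both repairable but currently glossed: verifying the hypotheses of theorem \ref{T1N} for the pair $(\Lambda,\Psi)$ requires handling edges of $\Lambda$ that collapse in $\tilde\Lambda_1$ (there one must route through the surviving edge $e_{new}$, using $C\subset A$ or $C\subset B$), and the assertion that vertex groups of $\Phi_V$ are elliptic for the splitting $A\ast_CB$ and split into ``sides'' needs an actual argument (e.g.\ projecting to the $\Lambda(V)$-invariant copy of the Bass--Serre tree of $A\ast_CB$ inside $T_{\Lambda_1}$), since the equivariant map $T_\Psi\to T_{\tilde\Lambda_1}$ only sends edges to paths.
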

\begin{proof} Let $V$ be the vertex group such that $\Lambda(V)$ splits as $A\ast _CB$ and let $\Lambda_1$ be the graph of groups resulting from replacing $\Lambda(V)$ by this splitting. If there is $w\in W$ such that $wBw^{-1}\subset A$, then (by considering the Bass-Serre tree for $\Lambda_1$) a $W$-conjugate of $B$ is a subgroup of $C$. Lemma \ref {Kequal} then implies $B=C$, which is nonsense. Hence no $W$-conjugate of $B$ (respectively $A$) is a subgroup of $A$ (respectively $B$).  This implies that if $\Lambda_2$ is obtained by reducing $\Lambda_1$, then there is an edge $\bar C$ of $\Lambda_2$ with vertices $\bar A$ and $\bar B$, such that $\Lambda_2(\bar C)=C$, and $\Lambda_2(\bar A)$ is $\hat A$ where $\hat A$ is either $A$ or a vertex group (other than $\Lambda_1(V)$) of $\Lambda_1$ containing $A$ as a subgroup. Similarly for $\Lambda _2(\bar B)$.

If $B$ collapses across an edge of $\Lambda_1$ then $B$ is conjugate to a group in $K(W,S)$ and $B$ satisfies the conclusion of the proposition. If $B$ does not collapse across an edge of $\Lambda_1$ (so that $\hat B=B$), then let $\Phi_B$ be the reduced graph of groups decomposition of $B$ induced from the action of $B$ on $\Psi$, the visual decomposition of $W$ from $\Lambda_2$. By theorem \ref{T1N}, each vertex group of $\Phi_B $ is conjugate to a group in $K(W,S)$ and the decomposition $\Phi_B$ is compatible with $\Lambda_2$. Let $\Lambda_3$ be the graph of groups decomposition of $W$ obtained from $\Lambda_2$ by replacing the vertex for $B$ by $\Phi_B$.  In $\Lambda_3$, the edge $\bar C$ connects the vertex $\bar A$ to say the $\Phi_B$-vertex $\tilde B$. If $\Lambda _3(\tilde B)$ is not conjugate to a subgroup of $A$, then $\Lambda _3(\tilde B)$ satisfies the conclusion of our proposition. Otherwise, (as before) lemma \ref{Kequal} implies $\Lambda_3(\bar C)=\Lambda_3(\tilde B)$ and we collapse $\tilde B$ across $\bar C$ to form $\Lambda _4$. Note that if $\bar C$ does collapse, then $\Phi_B$ has more than one vertex.  
There is an edge of $\Lambda_4$ (with edge group some subgroup of $C$ which is also an edge group of $\Phi_B$) separating the vertex $\bar A$ from some vertex $K$ of $\Phi_B$. The group $\Lambda_4(K)$ satisfies the conclusion of the proposition, since otherwise a $W$-conjugate of $\Lambda _4(K)$ is a subgroup of $A$. But then lemma \ref{Kequal} implies $\Lambda _4(K)$ is equal to an edge group of $\Phi_B$ which is impossible. 
\end{proof}

Proposition \ref{Kreduce} is the last result of this section needed to prove our main theorem. The remainder of the section is devoted to proving theorem \ref{vismin}, a minimal splitting version of the visual decomposition theorem of \cite{MTVisual}. In order to separate this part of the paper from the rest, some lemmas are listed here that could have been presented in earlier sections. 
The next lemma follows directly from theorem \ref{Index}.

\begin{lemma} \label{properE}
Suppose $(W,S)$ is a finitely generated Coxeter system and
$A\subset S$. If $B$ is a proper subset of $E(A)$ then $\langle
B\rangle$ has infinite index in $\langle E(A)\rangle$. $\square$
\end{lemma}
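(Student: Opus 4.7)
The plan is to argue by contradiction. Suppose $B \subsetneq E(A)$ and $\langle B\rangle$ has finite index in $\langle E(A)\rangle$. First I would dispose of the degenerate cases: if $E(A) = \emptyset$ there is no proper subset to consider, and if $\langle E(A)\rangle$ were finite then, combined with $\langle A\rangle = \langle E(A)\rangle \times \langle T(A)\rangle$, the set $A$ itself would realize a finite direct visual factor of $\langle A\rangle$ strictly larger than $T(A)$, violating the maximality in the definition of $T(A)$. So we may assume $\langle E(A)\rangle$ is infinite, hence $\langle B\rangle$ is infinite too.

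Now apply Proposition \ref{Index} to the infinite subgroup $G = \langle E(A)\rangle$ of $W$ with the subset $B \subset S$ satisfying $\langle B\rangle$ of finite index in $G$. This produces a decomposition $G = \langle B_0\rangle \times C'$ where $B_0 \subset B$ and $C'$ is a finite subgroup of $\langle lk_2(B_0)\rangle$. Invoking lemma \ref{Tits} as in the parenthetical remark of Proposition \ref{Index}, I may take $C' \subset \langle D\rangle$ for some $D \subset S - B_0$ with $D$ commuting with $B_0$ and $\langle D\rangle$ finite. Thus $\langle E(A)\rangle \subset \langle B_0\rangle \langle D\rangle \subset \langle B_0 \cup D\rangle$.

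Each $s \in E(A) \subset S$ is a Coxeter generator, so its unique $S$-geodesic is the single letter $s$; if $s \in \langle B_0 \cup D\rangle$, the deletion condition (or equivalently Lemma \ref{Order} applied to $B_0 \cup D$) forces $s \in B_0 \cup D$. Setting $D_0 = D \cap E(A)$, it follows that $E(A) = B_0 \sqcup D_0$, and since $D_0 \subset D$ commutes with $B_0$ we obtain the visual direct product $\langle E(A)\rangle = \langle B_0\rangle \times \langle D_0\rangle$. Comparing with $\langle E(A)\rangle = \langle B_0\rangle \times C'$ (both are quotients by $\langle B_0\rangle$) yields $\langle D_0\rangle \cong C'$, so $\langle D_0\rangle$ is finite.

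Finally, since $D_0 \subset E(A)$ and $E(A) \cap T(A) = \emptyset$, elements of $D_0$ commute with those of $T(A)$ by the original decomposition $\langle A\rangle = \langle E(A)\rangle \times \langle T(A)\rangle$. Hence $\langle D_0 \cup T(A)\rangle = \langle D_0\rangle \times \langle T(A)\rangle$ is finite, and $\langle A\rangle = \langle B_0\rangle \times \langle D_0 \cup T(A)\rangle$ with $A - B_0 = D_0 \cup T(A)$. The maximality clause in the definition of $T(A)$ (Remark \ref{Split}) then forces $D_0 \cup T(A) \subset T(A)$, i.e. $D_0 = \emptyset$, so $E(A) = B_0 \subset B$, contradicting $B \subsetneq E(A)$. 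The main obstacle is the step of converting the abstract direct factor $C'$ into a visual complement $\langle D_0\rangle$ inside $E(A)$; this is handled by combining lemma \ref{Tits} with the Coxeter-generator observation that $s \in \langle B_0 \cup D\rangle$ implies $s \in B_0 \cup D$.
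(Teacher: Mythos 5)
Your proposal is correct and takes essentially the same route as the paper, which offers no written argument beyond the remark that the lemma ``follows directly'' from Proposition \ref{Index}: you apply Proposition \ref{Index} to the infinite group $\langle E(A)\rangle$ with its finite-index visual subgroup $\langle B\rangle$, upgrade the finite complement to a visual one using the fact that a Coxeter generator lying in a special subgroup must be one of its generators, and then contradict the maximality of $T(A)$ in Remark \ref{Split}. The details you supply (the degenerate cases, the identification $E(A)=B_0\sqcup D_0$, and the final maximality contradiction) are exactly the routine verification the paper leaves implicit, so nothing further is needed.
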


\begin{lemma}\label{minequiv} 
Suppose $(W,S)$ is a finitely generated Coxeter system, $A$ and
$B$ are subsets of $S$ such that $\langle A\rangle$ and $\langle
B\rangle$ are elements of $M(W)$. If $E(A)\subset B$ then
$E(A)=E(B)$.
\end{lemma}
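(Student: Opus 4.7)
My plan is to pass from the asymmetric hypothesis $E(A)\subset B$ to its mirror $E(B)\subset A$ using the minimality of $\langle B\rangle$, and then squeeze out the desired equality using the maximality built into the definition of $T(\cdot)$.

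First step. Because special subgroups intersect specially (a special case of Lemma~\ref{Kil} with trivial double-coset representative), $\langle A\rangle\cap\langle B\rangle=\langle A\cap B\rangle$. Since $E(A)\subset A\cap B$ and $\langle E(A)\rangle$ has finite index $|\langle T(A)\rangle|$ in $\langle A\rangle$, the group $\langle A\cap B\rangle$ has finite index in $\langle A\rangle$. Because $\langle A\rangle\in M(W)$ it is a splitting subgroup of $W$, and because $\langle B\rangle\in M(W)$ nothing is smaller than $\langle B\rangle$; hence $\langle A\cap B\rangle$ must also have finite index in $\langle B\rangle$. Using $\langle B\rangle=\langle E(B)\rangle\times\langle T(B)\rangle$ together with the partition $A\cap B=(A\cap E(B))\sqcup(A\cap T(B))$ (whose two sides commute and generate trivially intersecting subgroups), one obtains $\langle A\cap B\rangle=\langle A\cap E(B)\rangle\times\langle A\cap T(B)\rangle$, and Lemma~\ref{properE} combined with the automatic finiteness of the $T$-factor forces $A\cap E(B)=E(B)$, i.e., $E(B)\subset A$.

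Second step. I would first record that $E(E(X))=E(X)$ for any $X\subset S$: if $C\subset E(X)$ were a nonempty subset with $\langle E(X)\rangle=\langle E(X)-C\rangle\times\langle C\rangle$ and $\langle C\rangle$ finite, then $\langle C\rangle$ and $\langle T(X)\rangle$ commute (they sit inside the commuting factors $\langle E(X)\rangle$ and $\langle T(X)\rangle$) and so $C\cup T(X)$ would be a strictly larger finite direct-factor subset of $X$, violating maximality of $T(X)$; hence $T(E(X))=\emptyset$. With this in hand, partition $E(A)=(E(A)\cap E(B))\sqcup(E(A)\cap T(B))$; the two sides commute inside $\langle B\rangle$ and generate subgroups intersecting trivially, so $\langle E(A)\rangle=\langle E(A)\cap E(B)\rangle\times\langle E(A)\cap T(B)\rangle$ with a finite second factor whose generating set is a subset of $E(A)$. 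Maximality of $T(E(A))=\emptyset$ then forces $E(A)\cap T(B)=\emptyset$, so $E(A)\subset E(B)$. Applying the same argument to the containment $E(B)\subset A$ derived in the first step yields $E(B)\subset E(A)$, and the two inclusions together give $E(A)=E(B)$.

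The crux is the first step: that is the only place where minimality of $\langle B\rangle$ (and the fact that $\langle A\rangle$ is itself a splitting subgroup) is used, converting the one-sided set containment into a symmetric one. Once both $E(A)\subset B$ and $E(B)\subset A$ are in hand, the remainder is purely combinatorial book-keeping of visual finite direct factors against the maximality inherent in $T(\cdot)$.
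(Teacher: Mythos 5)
Your argument is correct and rests on the same two pillars as the paper's own proof: the minimality of $\langle B\rangle$ played off against the splitting subgroup $\langle A\rangle$, together with Lemma \ref{properE}. The paper derives $E(A)\subset E(B)$ directly from the definitions and then uses minimality with Lemma \ref{properE} to rule out proper containment, whereas you route through the symmetric containment $E(B)\subset A$ and spell out the visual direct-factor bookkeeping; this is a reorganization of the same approach rather than a genuinely different one.
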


\begin{proof} If $E(A)\subset B$, then the definitions of $E(A)$
and $E(B)$, imply $E(A)\subset E(B)$. As $\langle B\rangle \in
M(W)$, lemma \ref{properE} implies $E(A)$ is not a proper subset
of $E(B)$.
\end{proof}

\begin{lemma} \label{connect} 
Suppose $(W,S)$ is a finitely generated Coxeter system, $C\subset
S$ is such that $\langle C\rangle \in M(W)$ and $C$ separates
$\Gamma (W,S)$. If $K\subset S$ is a component of $\Gamma -C$,
then for each $c\in E(C)$, there is an edge connecting $c$ to $K$.
\end{lemma}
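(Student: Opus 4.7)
The plan is to argue by contradiction using corollary \ref{C7}. Assume some $c\in E_W(C)$ has no edge in $\Gamma(W,S)$ to any vertex of $K$. I will produce $D\subset S$ that separates $\Gamma(W,S)$ with $E_W(D)$ a proper subset of $E_W(C)$; since $\langle C\rangle\in M(W)$, this contradicts corollary \ref{C7}.

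The natural candidate is $D=C-\{c\}$. First I verify that $D$ still separates $\Gamma(W,S)$: the induced subgraph on $S-D=(S-C)\cup\{c\}$ agrees with $\Gamma - C$ except that the vertex $c$ is added back together with its $\Gamma$-edges to $S-C$. By hypothesis $c$ has no edge to any vertex of $K$, so in $\Gamma - D$ the set $K$ remains a union of connected components disjoint from $\{c\}$ and from the other components of $\Gamma - C$. Since $C$ separates $\Gamma(W,S)$, there is at least one component of $\Gamma - C$ besides $K$, so $D$ indeed separates $\Gamma(W,S)$.

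Next I compare $E_W(D)$ to $E_W(C)$. Write $C$ as the union $E_W(C)\cup T_W(C)$, so (remark \ref{Split}) $\langle C\rangle=\langle E_W(C)\rangle\times\langle T_W(C)\rangle$ with $\langle T_W(C)\rangle$ finite. Because $T_W(C)\subset D$ and $T_W(C)$ commutes elementwise with $E_W(C)$, in particular with $D-T_W(C)=E_W(C)-\{c\}$, the group $\langle D\rangle$ visually decomposes as $\langle E_W(C)-\{c\}\rangle\times\langle T_W(C)\rangle$, a direct product whose second factor is finite. By the maximality clause in the definition of $T_W(D)$ (remark \ref{Split}), $T_W(D)\supseteq T_W(C)$, so $E_W(D)=D-T_W(D)\subseteq D-T_W(C)=E_W(C)-\{c\}$, a proper subset of $E_W(C)$.

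This contradicts corollary \ref{C7}, completing the proof. I do not foresee a significant obstacle: the candidate $D=C-\{c\}$ is essentially forced by the setup, and both verifications reduce to elementary uses of the visual direct-product structure of $\langle C\rangle$ recorded in remark \ref{Split}.
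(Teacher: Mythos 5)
Your proof is correct and is essentially the paper's own argument: the paper likewise notes that if some $c\in E_W(C)$ had no edge to $K$, then $C-\{c\}$ would still separate $\Gamma(W,S)$, contradicting $\langle C\rangle\in M(W)$. The only cosmetic difference is that you route the contradiction through corollary \ref{C7} (whose relevant direction is just lemma \ref{properE} plus the definition of minimality), while the paper cites lemma \ref{properE} directly.
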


\begin{proof} Otherwise, $C-\{c\}$ separates $\Gamma$. This is
impossible by lemma \ref{properE} and the fact that $\langle
C\rangle \in M(W)$.
\end{proof}

In the remainder of this section we simplify notation for visual graph of groups decompositions by labeling each vertex of such a graph by $A$, where $A\subset S$ and $\langle A\rangle$ is the vertex group. It is possible for two distinct edges of such a decomposition to have the same edge groups so we do not extend this labeling to edges.

\begin{lemma} \label{mincompat2}
Suppose $(W,S)$ is a finitely generated Coxeter system and $\Psi$
is a reduced $(W,S)$-visual graph of groups decomposition with
$M(W)$-edge groups. If $A\subset S$ is a vertex of $\Psi$, and
$M\subset S$ is such that $\langle M\rangle \in M(W)$, $M$
separates $\Gamma(W,S)$ and $E(M)\subset A$, then

1) either $E(M)=E(C)$ for some $C\subset S$ and $\langle C\rangle$ the edge group of an edge of $\Psi$ adjacent to $A$, or $M\subset A$ and $M$ separates $A$ in
$\Gamma$, and

2) for each $C\subset S$ such that $\langle C\rangle$ is the edge group of an edge of $\Psi$ adjacent to $A$, $C-M$ is
a subset of a component of $\Gamma -M$.

In particular, if $E(M)\ne E(C)$ for each $C\subset S$ such that $\langle C\rangle$ is the edge group of an edge adjacent to $A$ in $\Psi$, then $\langle A\rangle$ visually splits over
$\langle M\rangle$, compatibly with $\Psi$, such that each vertex
group of the splitting is generated by $M$ union the intersection of $A$ with a
component of $\Gamma -M$.
\end{lemma}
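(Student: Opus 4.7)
The plan is to prove parts (1) and (2) using the subtree characterization in Lemma \ref{MT2}, the minimality of $\langle M\rangle$ and $\langle C\rangle$ in $M(W)$, and Lemma \ref{minequiv}; the ``in particular'' clause then follows by partitioning components and invoking Lemma \ref{MT3}. Throughout I write $M = E(M) \cup T(M)$ with $T(M)$ commuting with $E(M)$, and for each edge of $\Psi$ adjacent to $A$ I let $C_i \subset S$ generate the edge group and let $B_i$ denote the union of the generating sets of the vertices in the component $\Psi_i$ of $\Psi - \{A\}$ lying beyond that edge, so $A \cap B_i = C_i$ by Lemma \ref{MT2}. The key geometric input is that for any $x \in S \setminus A$ the subtree $T_x \subset \Psi$ misses the vertex $A$, and hence lies entirely in a single $\Psi_i$.

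For part (1), I would show the contrapositive: if $E(M) \neq E(C_i)$ for all $i$, then $M \subset A$ and $M$ separates $A$. Two scenarios would otherwise force an equality $E(M) = E(C_i)$. First, suppose some $t \in T(M) \setminus A$ exists; then $t \in B_i \setminus A$ for a unique $i$, and for each $s \in E(M) \subset A$ the label-$2$ edge $\{s, t\}$ of $\Gamma$ must be realised in a vertex of $\Psi$ lying in $T_s \cap T_t \subset T_s \cap \Psi_i$, forcing $T_s$ to traverse the edge of $\Psi$ joining $A$ to $\Psi_i$, so $s \in C_i$. Therefore $E(M) \subset C_i$, and Lemma \ref{minequiv} upgrades this to $E(M) = E(C_i)$. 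Second, if $M \subset A$ but $M$ does not separate $A$ in $\Gamma$, then $A - M$ lies in a single component of $\Gamma - M$; any other component $K$ (existing because $M$ separates $\Gamma$) is disjoint from $A$, and the same subtree argument applied to consecutive vertices of a path in $K$ shows $K$ lies in a single $B_i \setminus A$. Lemma \ref{connect} provides, for each $s \in E(M)$, an edge of $\Gamma$ from $s$ into $K$, and the previous subtree argument again forces $s \in C_i$, hence $E(M) = E(C_i)$. Ruling out both scenarios leaves case (b).

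For part (2), fix an edge-group generating set $C \subset A$ adjacent to $A$. If $E(M) = E(C)$ then $E(C) \subset M$, so $C - M \subset T(C)$; because $\langle T(C)\rangle$ is finite, $m(t, t') < \infty$ for all $t, t' \in T(C)$ (else $\langle t, t'\rangle$ would be infinite dihedral inside a finite group), so any two elements of $C - M$ are joined by a direct edge of $\Gamma - M$. If $E(M) \neq E(C)$, suppose for contradiction that $c_1, c_2 \in C - M$ lie in distinct components of $\Gamma - M$. Any path in the induced subgraph $\Gamma_C$ from $c_1$ to $c_2$ that avoided $M \cap C$ would lift to a $\Gamma - M$ path, so $M \cap C$ separates $c_1$ from $c_2$ in $\Gamma_C$; whence $\langle C\rangle$ visually splits non-trivially over $\langle M \cap C\rangle$. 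Equivariant Bass-Serre blow-up of the $W$-tree of $\Psi$ at the orbit of the edge carrying $\langle C\rangle$ then produces a non-trivial splitting of $W$ over $\langle M \cap C\rangle$. Minimality of $\langle M\rangle$ prohibits $\langle M \cap C\rangle$ from being smaller than $\langle M\rangle$, and since $\langle M \cap C\rangle \cap \langle M\rangle = \langle M \cap C\rangle$, this forces $\langle M \cap C\rangle$ to have finite index in $\langle M\rangle$. Via the decomposition $\langle M\rangle = \langle E(M)\rangle \times \langle T(M)\rangle$ and Lemma \ref{properE}, this yields $E(M) \subset C$, and Lemma \ref{minequiv} then gives $E(M) = E(C)$, a contradiction.

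For the ``in particular'' conclusion, assume $E(M) \neq E(C_i)$ for every $i$. Part (1) yields $M \subset A$ and that $M$ separates $A$, so $A - M$ meets at least two components of $\Gamma - M$; part (2) places each $C_i - M$ in a single component. Any partition of the components of $\Gamma - M$ into two non-empty classes with each class containing a component meeting $A - M$ then satisfies the hypotheses of Lemma \ref{MT3}, producing a non-trivial $\Psi$-compatible visual splitting of $\langle A\rangle$ over $\langle M\rangle$. Iterating this two-way split until every class contains only one $A$-meeting component yields the advertised multi-way visual decomposition of $\langle A\rangle$ with vertex groups $\langle M \cup (A \cap K)\rangle$ over the components $K$ of $\Gamma - M$ meeting $A$. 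The main technical hurdle will be the Bass-Serre refinement in part (2), where the internal visual split of $\langle C\rangle$ must be translated into a genuine splitting of $W$ over $\langle M \cap C\rangle$; this is routine but requires careful equivariant bookkeeping for the $W$-action on the blown-up tree.
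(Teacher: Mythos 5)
Parts (1), the $E(M)=E(C)$ half of part (2), and your reduction of the ``in particular'' clause to Lemma \ref{MT3} are essentially the paper's own argument: you use the same three ingredients (the subtree property of Lemma \ref{MT2} to force generators into the edge group $C_i$ separating $A$ from a component of $\Psi-\{A\}$, Lemma \ref{connect}, and Lemma \ref{minequiv}), merely organized contrapositively, and your scenario (ii) with a component $K$ of $\Gamma-M$ disjoint from $A$ is a clean variant of the paper's direct exhibition of two $M$-separated points of $A$. That much is correct.

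The genuine gap is the main case of part (2), where $E(M)\ne E(C)$. Your pivotal claim -- that an ``equivariant Bass-Serre blow-up of the $W$-tree of $\Psi$ at the orbit of the edge carrying $\langle C\rangle$'' converts the visual splitting of $\langle C\rangle$ over $\langle M\cap C\rangle$ into a non-trivial splitting of $W$ over $\langle M\cap C\rangle$ -- is not a valid construction. One can refine a graph of groups by splitting a \emph{vertex} group compatibly; a splitting of an \emph{edge} group does not induce any splitting of the ambient group, and there is no reason $M\cap C$ should separate $\Gamma(W,S)$. A concrete counterexample to the principle you invoke: take $S=\{a,b,c,d\}$ with $m(a,b)=m(c,d)=\infty$ and all other $m(s,t)$ finite, and $\Psi=\langle a,b,c\rangle\ast_{\langle a,b\rangle}\langle a,b,d\rangle$; the edge group $\langle a,b\rangle\cong \mathbb Z_2\ast\mathbb Z_2$ splits non-trivially over the trivial subgroup, yet $W$ is $1$-ended and so splits over no finite subgroup at all. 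Thus the step you describe as ``routine equivariant bookkeeping'' is exactly where the argument breaks, and your subsequent (correct) minimality computation -- finite index of $\langle M\cap C\rangle$ in $\langle M\rangle$ forces $E(M)\subset C$ via Lemma \ref{properE} and then $E(M)=E(C)$ via Lemma \ref{minequiv} -- never gets off the ground because no splitting of $W$ over $\langle M\cap C\rangle$ has been produced. The paper avoids this entirely and argues inside $\Gamma$: since $\Psi$ is reduced and visual, $C$ itself separates $\Gamma$, giving $W=\langle D_C\rangle\ast_{\langle C\rangle}\langle B_C\rangle$ with $A\subset D_C$; with $M\subset A$ (from part (1)) the far side $B_C-C$ misses $M$; Lemma \ref{minequiv} gives $E(C)\not\subset M$, and Lemma \ref{connect} applied to $C$ (not to $M$) connects every element of $E(C)$ to every component of $\Gamma-C$ lying in $B_C$, so $(B_C-C)\cup(E(C)-M)$, and then all of $C-M$ via the commuting edges from $T_W(C)-M$ to a fixed $c\in E(C)-M$, lies in a single component of $\Gamma-M$. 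Some argument of this kind, routing the connection through the far side of the edge rather than through a purported splitting of $W$ over $\langle M\cap C\rangle$, is what your part (2) still needs.
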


\begin{proof}  First we show that if $M\not \subset A$, then
$E(M)=E(C)$ for some $C$ such that $\langle C\rangle$ is the edge group of an edge adjacent to $A$ in $\Psi$. If
$E(M)=\emptyset$ then $\langle M\rangle$ is finite and
$E(C)=\emptyset$ for every $C\subset S$ such that $\langle
C\rangle \in M(W)$. Hence we may assume $E(M)\ne \emptyset$. As
$E(M)\subset A$, there is $m\in M-E(M)$ such that $m\not \in A$.
Say $m\in B$ for $B\subset S$ a vertex of $\Psi$. If $E$
is the first edge of the $\Psi$-geodesic from $A$ to $B$ and $\Psi (E)=C$, then
$m\not \in C$. But in $\Gamma$, there is an edge between $m$ and
each vertex of $E(M)$. Hence $E(M)\subset C$ and lemma
\ref{minequiv} implies $E(M)=E(C)$.

To complete part 1), it suffices to show that if $E(M)\ne E(C)$
for all $C\subset S$ such that $\langle C\rangle$ is the edge group of an edge of $\Psi$ adjacent to the vertex $A$
of $\Psi$, then $M$ separates $A$ in $\Gamma$. We have shown that
$M\subset A$. Write $W=\langle D_C\rangle \ast _{\langle C\rangle}
\langle B_C\rangle$ where $C\subset S$ is such that $\langle C\rangle=\Psi(E)$ for $E$ an edge of $\Psi$ adjacent to $A$, and $B_C$ (respectively $D_C$)  the union of
the $S$-generators of vertex groups for all vertices of $\Psi$ on
the side of $E$ opposite $A$ (respectively, on the same side of
$C$ as $A$). In particular, $M\subset D_C$ and $M\cap
(B_C-C)=\emptyset$. Then $B_C$ is the union of $C$ and some of the
components of $\Gamma -C$ (and $D_C$ is the union of $C$ and the
rest of the components of $\Gamma -C$). By lemma \ref{minequiv},
$E(C)\not \subset M$. Choose $c\in E(C)-M$. If $B'$ is a
component of $\Gamma -C$ and $B'\subset B_C$, then by lemma
\ref{connect}, there is an edge of $\Gamma$ connecting $c$ and
$B'$. Hence $(B_C-C)\cup (E(C)-M)\subset K_C$ for some component
$K_C$ of $\Gamma -M$. In particular, $c\in A\cap K_C$. Also note
that if $c'\in C-M$ then either $c'\in E(C)-M$ or there is an edge
of $\Gamma$ connecting $c'$ to $c$. In either case $c'\in K_C$
and $(B_C-C)\cup (C-M)\subset K_C$.

For $i\in \{1,\ldots ,n\}$, let $E_i$ be the edges of
$\Psi$ adjacent to $A$ and let $\langle C_i\rangle=\Psi(E_i)$ for $C_i\subset S$. Since $\langle A\rangle$ is a vertex group of $\Psi$, $\Gamma -A=\cup _{i=1}^n
(B_{C_i}-C_i)\subset \cup_{i=1}^n K_{C_i}$. We have argued that
there is $c_i\in A\cap K_{C_i}$ for the  component $K_{C_i}$ of
$\Gamma -M$. If $K_{C_i}\ne K_{C_j}$, then $M$ separates the
points $c_i$ and $c_j$ of $A$, in $\Gamma$. If all $K_{C_i}$ are
equal (e.g. when $n=1$), then $\Gamma -K_{C_i}\subset A$. Since
$M$ separates $\Gamma$, $\Gamma \ne K_{C_i}\cup M$, so $M$
separates $c_i$ from a point of $A-(K_{C_i}\cup M)$. In any case
part 1) is proved.

Part 2): As noted above, if $E(M)\ne E(C)$, then for any
$C\subset S$  such that $\langle C\rangle$ is the edge group of an edge of $\Psi$ adjacent to $A$ we have $(B_C-C)\cup
(C-M)\subset K_C$ for $K_C$ a component of $\Gamma -M$ and $B_C$
some subset of $S$. If $E(M)=E(C)$ then $\langle C-M\rangle$ is finite, so $C-M$ is a complete
subset of $\Gamma$ and hence a subset of a component of $\Gamma
-M$.
\end{proof}

The next result is a minimal splitting version of the visual decomposition theorem. While part 2) of the conclusion is slightly weaker than the corresponding conclusion of the visual decomposition theorem, part 3) ensures that all edge groups of a given graph of groups decomposition of a finitely generated Coxeter group are ``refined" by minimal visual edge groups of a visual decomposition. The example following the proof of this theorem shows that part 2) cannot be strengthened. 

\begin{theorem} \label{vismin} 
Suppose $(W,S)$ is a finitely generated Coxeter system and
$\Lambda$ is a reduced graph of groups decomposition for $W$.
There is a reduced visual decomposition $\Psi$ of $W$ such
that

1) each vertex group of $\Psi$ is a subgroup of a conjugate of a
vertex group of $\Lambda$,

2) if $D$ is an edge of $\Psi$ then either $\Psi (D)$ is
conjugate to a subgroup of an edge group of $\Lambda$, or
$\Psi(D)$ is a minimal splitting subgroup for $W$ and a visual subgroup of finite index in
$\Psi (D)$ is conjugate to a subgroup of an edge group of
$\Lambda$.

3) for each edge $E$ of $\Lambda$ there is an edge $D$ of $\Psi$
such that $\Psi(D)$ is a minimal splitting subgroup for $W$, and a visual subgroup of finite
index of $\Psi (D)$ is conjugate to a subgroup of $\Lambda (E)$.

\end{theorem}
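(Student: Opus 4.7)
The plan is to begin with the visual decomposition $\Psi_0$ from $\Lambda$ given by Theorem \ref{MT1}, and then iteratively refine it using Lemma \ref{mincompat2} to introduce, one at a time, minimal splitting subgroups that are produced by Proposition \ref{L24N} from the edge groups of $\Lambda$. The initial $\Psi_0$ already satisfies property (1), and this property is preserved by every subsequent refinement since we only split existing vertex groups further. By the remark following Theorem \ref{MT1}, every edge of $\Lambda$ has an associated edge of $\Psi_0$ whose group is conjugate to a subgroup of the corresponding edge group of $\Lambda$, so any edge of $\Psi_0$ that survives the refinement automatically falls into the first disjunct of (2).

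For each edge $E$ of $\Lambda$, reading off the splitting $W=X_E\ast_{\Lambda(E)}Y_E$ and applying Proposition \ref{L24N} yields $M_E\subset S$ and $w_E\in W$ with $\langle M_E\rangle\in M(W)$, $M_E$ separating $\Gamma(W,S)$, and $w_E\langle E_W(M_E)\rangle w_E^{-1}\subset\Lambda(E)$. Since $\langle E_W(M_E)\rangle$ is visual and has finite index in $\langle M_E\rangle$ by the definition of $E_W$, it is precisely the visual finite-index subgroup demanded by parts (2) and (3). Property (3) will be secured once I have arranged, for each edge $E$ of $\Lambda$, an edge $D_E$ of the final $\Psi$ whose group is a $W$-conjugate of $\langle M_E\rangle$.

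To carry out the refinement, at each stage pick some $M_E$ that has not yet been realized as an edge group of the current $\Psi$, locate a vertex $A$ of $\Psi$ with $E_W(M_E)\subset A$, and invoke Lemma \ref{mincompat2}: either $E_W(M_E)=E_W(C)$ for some edge group $\langle C\rangle$ of $\Psi$ adjacent to $A$, in which case I replace $\langle C\rangle$ by $\langle M_E\rangle$ (gaining a minimal edge without disturbing compatibility); or $M_E\subset A$, $M_E$ separates $A$ in $\Gamma$, and Lemma \ref{mincompat2} produces a compatible visual splitting of $\langle A\rangle$ over $\langle M_E\rangle$. Termination is immediate since $S$ is finite, so only finitely many subsets generate minimal splitting subgroups, and every genuine refinement strictly increases the number of distinct vertex subsets.

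The main obstacle will be the placement step: showing that $E_W(M_E)$ can always be brought, possibly after replacing $M_E$ by a $W$-conjugate subset of $S$ still generating a minimal splitting subgroup, into a single vertex subset $A$ of the current $\Psi$. The argument I would use mirrors the proof of Theorem \ref{T1N}: $\Lambda(E)$ contains a conjugate of the finite-index subgroup $\langle E_W(M_E)\rangle$ of the minimal splitting subgroup $\langle M_E\rangle$, so once $\Psi$ carries at least one edge labeled by a conjugate of $\langle M_E\rangle$ (or some fellow minimal splitting refining the edge $E$), Corollary 4.8 of Dicks-Dunwoody forces $\Lambda(E)$ to stabilize a vertex of the Bass-Serre tree of $\Psi$. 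The stabilized vertex group contains a conjugate of $\langle E_W(M_E)\rangle$, and translating this back to the visual language supplies a vertex $A$ of $\Psi$ with an appropriate $W$-translate of $E_W(M_E)$ inside it, to which Lemma \ref{mincompat2} then applies. Once every $M_E$ has been incorporated, properties (2) and (3) are immediate from the construction, while (1) is preserved throughout; a final reduction yields the desired reduced $\Psi$.
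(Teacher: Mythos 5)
Your proposal inverts the paper's order of operations, and this opens two genuine gaps. First, Lemma \ref{mincompat2} is stated (and its proof works, via Lemmas \ref{minequiv} and \ref{connect}) only for a visual decomposition whose edge groups already lie in $M(W)$; your starting decomposition $\Psi_0$, the visual decomposition from $\Lambda$, has edge groups that are merely conjugate into the arbitrary (generally non-minimal) edge groups of $\Lambda$, so the lemma cannot be invoked at the first step, nor at later steps where the decomposition still carries non-minimal edges. Compounding this, in the first alternative of that lemma you propose to ``replace $\langle C\rangle$ by $\langle M_E\rangle$''; nothing in the paper licenses swapping an edge group of a visual decomposition for a different special subgroup --- $\langle M_E\rangle$ need not be contained in the two adjacent vertex groups and the fundamental group of the altered graph need not be $W$ --- and simply keeping the old edge does not help, because condition 3) requires the certifying edge group to be in $M(W)$, which $\langle C\rangle$ need not be.

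Second, your placement step misapplies corollary 4.8 of Dicks--Dunwoody. In Theorem \ref{vismin} the groups $\Lambda(E)$ are arbitrary: you only know a conjugate of $\langle E_W(M_E)\rangle$ sits inside $\Lambda(E)$, possibly with infinite index, so you have no finite-index subgroup of $\Lambda(E)$ known to stabilize a vertex of $T_\Psi$ and cannot conclude that $\Lambda(E)$ fixes a vertex; that argument belongs to Theorem \ref{T1N}, where minimality of $\Lambda(E)$ is an explicit hypothesis. The paper runs the tree argument in the opposite direction: it processes one edge group $C_i$ of $\Lambda$ at a time using the Bass--Serre tree $T_i$ of the one-edge splitting $W=K_i\ast_{C_i}L_i$, notes that the previously built minimal edge group $\langle M_1\rangle$ contains the finite-index subgroup $w^{-1}C_1w\cap\langle M_1\rangle$ which stabilizes a vertex of $T_i$, so $\langle M_1\rangle$ fixes a vertex of $T_i$ and the induced visual splitting is compatible, and then Corollary \ref{C7} together with Lemma \ref{mincompat2} (legitimately applicable because every current edge group is minimal) inserts the new minimal edge. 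Crucially, property 1) is not maintained during this induction; it is recovered only at the end by splitting each vertex of the resulting decomposition $\Psi'$ along its action on $T_\Lambda$ (where Dicks--Dunwoody applies correctly, since each edge group of $\Psi'$ has a finite-index subgroup inside a vertex stabilizer of $T_\Lambda$) and reducing. Unless you restructure the induction so that the decomposition being refined always has $M(W)$ edge groups and defer condition 1) to such a final refinement over $T_\Lambda$, the argument does not go through.
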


\begin{proof} Let $C_1$ be an edge group of $\Lambda$. By
proposition \ref{L24N} there exists $M_1\subset S$ and $w\in W$ such
that $\langle M_1\rangle \in M(W)$, $M_1$ separates $\Gamma
(W,S)$ and $w\langle M_1\rangle w^{-1}\cap C_1$ has finite index
in $w\langle M_1\rangle w^{-1}$. Then $W$ visually splits as
$\Psi_1\equiv \langle A_1\rangle \ast _{\langle M_1\rangle
}\langle B_1\rangle$ (so $A_1\cup B_1=S$, $M_1=A_1\cap B_1$, and
$A_1$ is the union of $M_1$ and some of the components of
$\Gamma-M_1$ and $B_1$ is $M_1$ union the other components of
$\Gamma -M_1$). Suppose $C_2$ is an edge group of $\Lambda$ other
than $C_1$. Then $W=K_2\ast _{C_2}L_2$ where $K_2$ and $L_2$ are
the subgroups of $W$ generated by the vertex groups of $\Lambda$
on opposite sides of $C_2$. Let $T_2$ be the Bass-Serre tree for
this splitting.

Suppose $\langle A_1\rangle$ and $\langle B_1\rangle$ stabilize
the vertices $X_1$ and $Y_1$ respectively of $T_2$. Then $X_1\ne
Y_1$, since $W$ is not a subgroup of a conjugate of $K_2$ or
$L_2$. Now, $\langle M_1\rangle$ stabilizes the $T_2$-geodesic
connecting $X_1$ and $Y_1$ and so $\langle M_1\rangle$ is a
subgroup of a conjugate of $C_2$. In this case we define
$\Psi_2\equiv \Psi_1$.

If $\langle A_1\rangle$ does not stabilize a vertex of $T_2$ then
there is a non-trivial visual decomposition $\Phi_1$ of $\langle
A_1\rangle$ from its action on $T_2$ as given by the visual
decomposition theorem. Since a conjugate of $\langle M_1\rangle
\cap w^{-1}C_1w$ has finite index in $\langle M_1\rangle$ and at
the same time stabilizes a conjugate of a vertex group of
$\Lambda$ (and hence a vertex of $T_2$), corollary 4.8 of
\cite{DicksDunwoody} implies $\langle M_1\rangle$ stabilizes a vertex
of $T_2$, and so $\Phi_1$ is visually compatible with the visual
splitting $\Psi_1=\langle A_1\rangle \ast _{\langle M_1\rangle }\langle
B_1\rangle$. If $\langle E_2\rangle$ is an edge group of $\Phi_1$,
then a conjugate of $\langle E_2\rangle $ is a subgroup of $C_2$.
By corollary \ref{C7}, there is $M_2\subset S$ such that $M_2$
separates $\Gamma(W,S)$,  $\langle M_2\rangle\in M(W)$ and
$E(M_2)\subset E_2$ and so $\langle E(M_2)\rangle$ is a subgroup
of a conjugate of $C_2$. If $E(M_2)\ne E(M_1)$, then lemma
\ref{mincompat2} implies $M_2\subset A_1$ and $\langle
A_1\rangle$ visually splits over $\langle M_2\rangle$ compatibly
with the splitting $\Psi_1$. Reducing produces a visual
decomposition $\Psi_2$. Similarly if $\langle A_1\rangle$
stabilizes a vertex of $T_2$ and $\langle B_1\rangle$ does not.

Inductively, assume $C_1,\ldots ,C_n$ are distinct edge groups of
$\Lambda$, $\Psi_{n-1}$ is a reduced visual graph of
groups decomposition, each edge group of $\Psi_{n-1}$ is in
$M(W)$ and contains a visual subgroup of finite index conjugate to a
subgroup of $C_i$ for some $1\leq i\leq n-1$, and for each $i\in
\{1,2,\ldots ,n-1\}$ there is an edge group $\langle M_i\rangle$ $(M_i \subset S)$ of
$\Psi_{n-1}$ such that a visual subgroup of finite index of $\langle M_i\rangle$ is
conjugate to a subgroup of $C_i$. Write $W=K_n\ast _{C_n}L_n$ as
above, and let $T_n$ be the Bass-Serre tree for this splitting.
Either two adjacent vertex groups of $\Psi_{n-1}$ stabilize
distinct vertices of $T_n$ (in which case we define $\Psi
_n\equiv \Psi_{n-1}$) or some vertex $V_i\subset S$ of $\Psi
_{n-1}$ does not stabilize a vertex of $T_n$. In the latter case
$\langle V_i\rangle$ visually splits (as above) to give $\Psi_n$.
Hence, we obtain a reduced visual decomposition $\Psi'$
such that for each edge group $\langle M\rangle$ $(M\subset S)$ of $\Psi'$, $\langle
M\rangle$ is a group in $M(W)$, a subgroup of finite index in $\langle
M\rangle$ is conjugate to a subgroup of an edge group of
$\Lambda$, and for each edge $D$ of $\Lambda$ there is
an edge group $\langle M\rangle$ of $\Psi'$ such that $\langle
E(M)\rangle$ (a subgroup of finite index in $\langle M\rangle$)
is conjugate to a subgroup of $\Lambda (D)$.

Suppose $V\subset S$ is a vertex of $\Psi'$. Consider $\Phi_V$,
the visual decomposition of $\langle V\rangle$ from its action on
$T_{\Lambda}$, the Bass-Serre tree for $\Lambda$. If $\langle D\rangle $ $(D\subset S)$
is an edge group for an edge of $\Psi '$ adjacent to $V$, then a subgroup of finite
index in $\langle D\rangle$  stabilizes a vertex of $T_{\Lambda}$.
By corollary 4.8 of \cite{DicksDunwoody}, $\langle D\rangle$
stabilizes a vertex of $T_{\Lambda}$ and $\Phi _V$ is compatible
with $\Psi'$. Replacing each vertex $V$ of $\Psi'$ by $\Phi_V$
and reducing gives the desired decomposition of $W$.
\end{proof}

The following example exhibits why one cannot expect a stronger
version of theorem \ref{vismin} with visual decomposition $\Psi$
having only minimal edge groups, or so that all minimal edge
groups of $\Psi$ are conjugate to subgroups of edge groups of
$\Lambda$.

\begin{example}
Consider the Coxeter presentation $\langle a_1, a_2, a_3, a_4,
a_5: a_i^2=1,
(a_1a_2)^2=(a_2a_3)^2=(a_3a_4)^2=(a_4a_5)^2=(a_5a_1^2)=(a_2a_5)^2=1\rangle$
and the splitting $\Lambda=\langle a_2,a_3,a_4\rangle \ast
_{\langle a_2,a_4\rangle }\langle a_1,a_2, a_4, a_5\rangle$. The
subgroup $\langle a_2,a_5\rangle$ is the only minimal visual
splitting subgroup for this system, and it is smaller than
$\langle a_2,a_4\rangle$. Then no subgroup of $\langle
a_2,a_4\rangle$ is a minimal splitting subgroup for our group.
The only visual decomposition for this splitting satisfying the
conclusion of theorem \ref{vismin} is: $\langle
a_1,a_2,a_5\rangle \ast _{\langle a_2,a_5\rangle}\langle
a_2,a_4,a_5\rangle \ast _{\langle a_2,a_4\rangle} \langle
a_2,a_3,a_4\rangle$.

\end{example}

\section{Accessibility}

We prove prove our main theorem in this section, a strong
accessibility result for splittings of Coxeter groups over groups
in $M(W)$. For a class of groups $\cal V$, we call a graph of groups decomposition of a
group {\it irreducible with respect to $\cal V$-splittings} if
for any vertex group $V$ of the decomposition, every non-trivial
splitting of $V$ over a group in $\cal V$ is not compatible with
the original graph of groups decomposition.

The following simple example describes a non-trivial compatible
splitting of a vertex group of a graph of groups decomposition
$\Lambda$, of a Coxeter group followed by a reduction to produce
a graph of groups with fewer edges than those of $\Lambda$. This
illustrates potential differences between accessibility and strong
accessibility.

\begin{example}
$$W\equiv \langle s_1,s_2:s_i^2\rangle\times \langle
s_3,s_4,s_5,s_6:s_i^2\rangle$$

First consider the splitting of $W$ as:

$$\langle s_1,s_2,s_3,s_4\rangle \ast _{\langle
s_1,s_2,s_4\rangle}\langle s_1,s_2,s_4,s_5\rangle \ast _{\langle
s_1,s_2,s_5\rangle}\langle s_1,s_2, s_5, s_6\rangle$$

The group $\langle s_1,s_2, s_4,s_5\rangle$ splits as $\langle
s_1,s_2,s_4\rangle \ast _{\langle s_1,s_2\rangle} \langle
s_1,s_2,s_5\rangle$. Replacing this group in the above splitting
with this amalgamated product and collapsing gives the following
decomposition of $W$:

$$\langle s_1,s_2,s_3,s_4\rangle \ast _{\langle
s_1,s_2\rangle}\langle s_1,s_2,s_5,s_6\rangle$$
\end{example}

\begin{proposition}\label{twoendvisualsplit} 
Suppose $(W,S)$ is a finitely generated Coxeter system, $\Psi$ is
a reduced visual graph of groups decomposition of $(W,S)$, with
$M(W)$ edge groups and $V$ is a vertex of $\Psi$ such that
$\Psi (V)$ decomposes compatibly as a nontrivial amalgamated product
$A*_{C}B$ where $C$ is in $M(W)$. Then $\Psi (V)$ is a nontrivial
amalgamated product of special subgroups over an $M(W)$ special
subgroup $U$, with $U$ a subgroup of a conjugate of $C$, and such
that any special subgroup contained in a conjugate of $A$ or $B$
is a subgroup of one of the factors of this visual splitting. In
particular, the vertex group $\Psi (V)$ visually splits, compatibly with
$\Psi$, to give a finer visual decomposition of $(W,S)$.
\end{proposition}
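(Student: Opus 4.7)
The plan is to pass to an enlarged graph of groups decomposition of $W$ that incorporates the compatible splitting $A\ast_CB$, then use theorem~\ref{vismin} to obtain a visual refinement with $M(W)$ edge groups, and finally read off from it the desired visual splitting of $\Psi(V)$ using the localisation tools of lemmas~\ref{LFin} and~\ref{mincompat2}. Setting $V_0\subset S$ so that $\Psi(V)=\langle V_0\rangle$, I would first form the graph of groups $\Lambda$ obtained from $\Psi$ by replacing the vertex $V$ with the splitting $A\ast_CB$; by the compatibility hypothesis this is a valid graph of groups decomposition of $W$, and all its edge groups (the inherited $\Psi$-edges and the new edge $C$) lie in $M(W)$. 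Reducing $\Lambda$ if necessary, theorem~\ref{vismin} produces a reduced visual decomposition $\Psi'$ of $(W,S)$ with $M(W)$ edge groups, and its part (3) applied to the new $C$-edge of $\Lambda$ delivers an edge $e^*$ of $\Psi'$ with edge group $\langle U\rangle$, $U\subset S$, $\langle U\rangle\in M(W)$, and $\langle E_W(U)\rangle$ conjugate in $W$ to a finite-index subgroup of $C$ (the finite index coming from the ``Furthermore" clause of proposition~\ref{L24N} since $C\in M(W)$).

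Next, I would show that $e^*$ actually lives inside $V$. Since a $W$-conjugate of $\langle E_W(U)\rangle$ sits in $C\subset\langle V_0\rangle$, lemma~\ref{LFin} yields $E_W(U)\subset V_0$. Applying lemma~\ref{mincompat2} to $\Psi$ with $M=U$ at the vertex $V$ gives two alternatives: either (a) $E_W(U)=E_W(C_j)$ for some $C_j\subset S$ with $\langle C_j\rangle$ the edge group of a $\Psi$-edge adjacent to $V$, or (b) $U\subset V_0$ and $U$ separates $V_0$ in $\Gamma(W,S)$, in which case part 2 of lemma~\ref{mincompat2} supplies the visual splitting $\langle V_0\rangle=\langle A_1\rangle\ast_{\langle U\rangle}\langle B_1\rangle$ compatible with $\Psi$. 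In the expected main case (b) this splitting already has $U\subset V_0$ and $\langle U\rangle\in M(W)$, so the ``in particular" clause follows at once. I would verify that $\langle U\rangle$ lies in a conjugate of $C$ by analysing the action of $\langle U\rangle=\langle E_W(U)\rangle\times\langle T_W(U)\rangle$ on the Bass-Serre tree of $W$'s splitting over $C$ induced by $\Lambda$: the infinite factor $\langle E_W(U)\rangle$ stabilizes a translate of the $C$-edge, and the finite $\langle T_W(U)\rangle$, commuting with it, must fix the same edge. The ``any special subgroup" property would follow by invoking theorem~\ref{MT1} with its moreover clause applied to $A\ast_CB$ viewed as a decomposition of $\langle V_0\rangle$: every special subgroup of $\langle V_0\rangle$ in a conjugate of $A$ or $B$ is then a subgroup of a vertex group of the resulting visual decomposition $\Phi$ of $\langle V_0\rangle$, and hence lies on one side of the $\langle U\rangle$-splitting.

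The hard part will be case (a). There $\langle U\rangle$ and $\langle C_j\rangle$ are commensurable members of $M(W)$ with the same $E_W$-part, but $U$ is not forced into $V_0$, so lemma~\ref{mincompat2} does not deliver a $V_0$-visual splitting directly. I anticipate handling this by combining two moves: first, use theorem~\ref{MT1} with its moreover clause on $A\ast_CB$ within $\langle V_0\rangle$ to produce a visual decomposition $\Phi$ of $\langle V_0\rangle$ automatically compatible with $\Psi$ (the moreover clause forces each $\Psi$-edge group adjacent to $V$ into a vertex group of $\Phi$, giving compatibility); second, pick an edge of $\Phi$ with edge group $\langle U_0\rangle$, $U_0\subset V_0$, inside a $\langle V_0\rangle$-conjugate of $C$, and iteratively refine $U_0$ by applying proposition~\ref{L24N}, corollary~\ref{C7}, lemma~\ref{LFin}, and lemma~\ref{mincompat2} to strictly decrease $E_W(U_0)$ while staying within $V_0$, terminating (by finiteness of $S$) in an $M(W)$ edge group. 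Ensuring that this descending-$E_W$ induction never exits $V_0$ — i.e., that the case-(a) alternative of lemma~\ref{mincompat2} can always be sidestepped, possibly by invoking proposition~\ref{P3N}'s vertex-bijection to choose a genuinely new refinement edge — is the delicate technical heart of the argument.
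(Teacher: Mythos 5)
Your route has genuine gaps, and it also misses the much shorter argument the paper uses. The paper works entirely inside the vertex group: since $\Psi(V)=\langle V_0\rangle$ is itself a Coxeter group with system $(\Psi(V),V_0)$, one applies Theorem \ref{MT1} (with its moreover clause) directly to $A\ast_CB$ viewed as a decomposition of $\Psi(V)$. The resulting reduced visual decomposition $\Psi'$ of $\Psi(V)$ has more than one vertex (nontriviality of $A\ast_CB$), every edge group lies in a conjugate of $C$, and every special subgroup of $\Psi(V)$ contained in a conjugate of $A$ or $B$ lies in a vertex group of $\Psi'$. Fixing one edge $U$ of $\Psi'$ and collapsing the rest gives the visual splitting; the "any special subgroup" property is exactly the moreover clause, compatibility with $\Psi$ follows because the $\Psi$-edge groups adjacent to $V$ are special subgroups of $\Psi(V)$ lying in conjugates of $A$ or $B$, and $U\in M(W)$ because $C\in M(W)$ and $U$ lies in a conjugate of $C$. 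None of the global machinery ($\Lambda$, Theorem \ref{vismin}, Lemma \ref{mincompat2}) is needed.

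Concretely, three steps of your proposal do not go through. First, Theorem \ref{vismin}(3) only places the finite-index visual part $\langle E_W(U)\rangle$, not $\langle U\rangle$ itself, into a conjugate of $C$, and your Bass-Serre repair is invalid: a finite group commuting with $\langle E_W(U)\rangle$ preserves the fixed subtree of $\langle E_W(U)\rangle$ and therefore fixes some vertex of it, which puts $\langle U\rangle$ into a conjugate of a vertex group, not necessarily into a conjugate of $C$; "must fix the same edge" is unjustified, and nothing in your construction forces the finite factor $\langle T_W(U)\rangle$ into a conjugate of $C$ at all. Second, even in your case (b) the required property that every special subgroup of a conjugate of $A$ or $B$ lies in one factor is not established: the visual decomposition $\Phi$ you get from Theorem \ref{MT1} has no stated relation to the two sides of the Lemma \ref{mincompat2} splitting over $\langle U\rangle$, so "hence lies on one side of the $\langle U\rangle$-splitting" is a non sequitur (it would be automatic only if the splitting were obtained by collapsing $\Phi$ itself, which is the paper's construction, not yours). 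Third, case (a), where $E_W(U)=E_W(C_j)$ for an adjacent $\Psi$-edge, can genuinely occur, and you acknowledge you have no complete argument there; as written the proof terminates in an unresolved "delicate technical heart" rather than a conclusion. Replacing the global detour by a direct application of Theorem \ref{MT1} to $A\ast_CB$ inside the Coxeter system $(\Psi(V),V_0)$ removes all three problems at once.
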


\begin{proof}
Applying theorem \ref{MT1} to the
amalgamated product $A \ast_CB$, we get that there is a reduced
visual graph of groups decomposition $\Psi '$ of $\Psi (V)$ such that each vertex
group of $\Psi '$ is a subgroup of a conjugate of $A$ or $B$ and each edge group
a subgroup of a conjugate of $C$. Then $\Psi '$ has more than one
vertex since $A*_{C}B$ being nontrivial means $\Psi (V)$ is not a
subgroup of a conjugate of $A$ or $B$.  Fix an edge of $\Psi '$,
say with edge group $U$, and collapse the other edges in $\Psi '$
to get a nontrivial visual splitting of $\Psi (V)$ over $U$ a subgroup of a
conjugate of $C$. By  theorem \ref{MT1},
a special subgroup of $\Psi (V)$ contained in a conjugate of $A$ or $B$
is contained in a vertex group of $\Psi '$ and so is contained in
one of the factors of the resulting visual splitting of $\Psi (V)$
derived from partially collapsing $\Psi '$. Hence this visual
decomposition of $\Psi (V)$ is compatible with $\Psi$, giving a finer
visual decomposition of $(W,S)$. Since $C$ is in $M(W)$ and a
conjugate of $U$ is a subgroup of $C$, $U$ is in $M(W)$.
\end{proof}

A visual decomposition $\Psi$ of a  Coxeter system $(W,S)$
{\it looks irreducible with respect to $M(W)$ splittings} if each
edge group of $\Psi$ is in $M(W)$ and for any subset $V$ of $S$ such that $\langle V\rangle$ is a vertex group of $\Psi$, $\langle V\rangle$ cannot be split visually, non-trivially and $\Psi$-compatibly over $\langle E\rangle\in M(W)$ for $E\subset S$, to give a finer visual decomposition of $W$. By lemma \ref{MT3}, it is
elementary to see that every finitely generated Coxeter group has
a visual decomposition that looks irreducible with respect to
$M(W)$ splittings. The following result is a direct consequence of
Proposition \ref{twoendvisualsplit}.

\begin{corollary}
A visual decomposition of a Coxeter group looks irreducible with
respect to $M(W)$ splittings, iff it is irreducible with respect
to $M(W)$ splittings. $\square$
\end{corollary}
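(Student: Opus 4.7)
The plan is to observe that this is essentially a direct consequence of Proposition \ref{twoendvisualsplit}, once one notes that splittings of a Coxeter group (or its vertex groups) are automatically amalgamated products rather than HNN extensions.

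The reverse direction is immediate: if $\Psi$ is irreducible with respect to $M(W)$ splittings, then no vertex group admits any compatible non-trivial splitting over an element of $M(W)$; a fortiori there is no such visual $\Psi$-compatible splitting, so $\Psi$ looks irreducible.

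For the forward direction, suppose $\Psi$ looks irreducible with respect to $M(W)$ splittings but, for contradiction, is not irreducible. Then some vertex $V$ of $\Psi$ admits a non-trivial $\Psi$-compatible splitting of $\Psi(V)$ over some $C\in M(W)$. Since $\Psi(V)$ has no non-trivial homomorphism to $\mathbb Z$ (it contains a visual, hence Coxeter, subgroup of finite index, as discussed after Lemma \ref{Sub}), this splitting is necessarily an amalgamated product $A\ast_C B$. Proposition \ref{twoendvisualsplit} then produces a non-trivial $\Psi$-compatible visual splitting of $\Psi(V)$ over a special subgroup $U\in M(W)$, contradicting the assumption that $\Psi$ looks irreducible.

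There is no real obstacle here: the work has been done in Proposition \ref{twoendvisualsplit}, and the only point to check is that the splitting one starts with is an amalgamated product so that proposition applies, which follows from the absence of $\mathbb Z$-quotients.
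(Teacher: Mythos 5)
Your proof is correct and follows the paper's route exactly: the paper also derives the corollary directly from Proposition \ref{twoendvisualsplit}, with the reverse direction being trivial and the forward direction using that any compatible non-trivial $M(W)$-splitting of a (special, hence Coxeter) vertex group must be an amalgamated product. The only nitpick is that $\Psi(V)$ is itself a special subgroup of $(W,S)$, so it is Coxeter outright (generated by involutions, hence admits no non-trivial map to $\mathbb Z$); you do not need to pass to a finite-index visual subgroup.
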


Hence any visual graph of groups decomposition of a Coxeter group
with $M(W)$ edge groups can be refined to a visual decomposition
that is irreducible with respect to $M(W)$ splittings.

\begin{corollary}\label{JSJproposition}
Suppose $(W,S)$ is a finitely generated Coxeter system and $W$ is
the fundamental group of a graph of groups $\Lambda$ where each
edge group is in $M(W)$.  Then $W$ has an irreducible with respect
to $M(W)$ splittings visual decomposition $\Psi$ where each vertex
group of $\Psi$ is a subgroup of a conjugate of a vertex group of
$\Lambda$.
\end{corollary}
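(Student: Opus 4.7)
The plan is to combine the visual decomposition theorem with an iterative refinement procedure governed by Lemma \ref{MT3} and the preceding corollary. First, I would apply Theorem \ref{MT1} to $\Lambda$ to obtain a reduced visual decomposition $\Psi_0$ of $W$ such that each vertex group of $\Psi_0$ is a subgroup of a conjugate of a vertex group of $\Lambda$, and each edge group of $\Psi_0$ is a subgroup of a conjugate of an edge group of $\Lambda$. Since the edge groups of $\Lambda$ lie in $M(W)$ by hypothesis, Lemma \ref{L16N} then guarantees that every edge group of $\Psi_0$ is itself in $M(W)$.

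Next, I would iteratively refine. At stage $i$, if the current reduced visual decomposition $\Psi_i$ does not already look irreducible with respect to $M(W)$ splittings, then by Lemma \ref{MT3} some vertex group $\langle V\rangle$ ($V\subset S$) admits a nontrivial compatible visual splitting $\langle A\cap V\rangle \ast_{\langle E\rangle} \langle B\cap V\rangle$ with $\langle E\rangle\in M(W)$. Replacing $\langle V\rangle$ by this splitting and reducing gives $\Psi_{i+1}$, which is again a reduced visual decomposition of $W$: its edge groups remain in $M(W)$ (the new edge group is $\langle E\rangle$ by choice, and the other edge groups are unchanged by the operation), and its vertex groups remain subgroups of conjugates of vertex groups of $\Lambda$ (the new vertex groups $\langle A\cap V\rangle$ and $\langle B\cap V\rangle$ being subsets of the old vertex group $\langle V\rangle$). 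Once the process terminates at some $\Psi$ that looks irreducible with respect to $M(W)$ splittings, the preceding corollary promotes this to irreducibility with respect to $M(W)$ splittings, and $\Psi$ satisfies the required conclusion.

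The main obstacle is verifying that the iteration terminates. Edge count alone is not a monotone invariant for the refinement operation, since a visual splitting may be followed by the reduction of one or even two adjacent edges whose edge groups happen to coincide with the newly created vertex groups, leaving the edge count unchanged or even decreasing it. The cleanest way around this is to observe that the set of reduced visual graph of groups decompositions of the finitely generated system $(W,S)$ is finite: each such decomposition is a finite tree whose vertices and edges are labeled by subsets of the finite set $S$, and the reducedness condition together with the subtree constraint of Lemma \ref{MT2} bounds the number of edges in terms of $|S|$. Combined with the observation that each refinement step produces a genuinely new decomposition (the new visual edge with edge subset $E$ is introduced where no such edge previously existed), the process cannot repeat indefinitely and must terminate in finitely many steps.
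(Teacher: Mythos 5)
Your overall route is the same as the paper's: apply theorem \ref{MT1} to get a reduced visual decomposition from $\Lambda$, iteratively split vertex groups visually, non-trivially and compatibly over $M(W)$ special subgroups (lemma \ref{MT3}), and pass from ``looks irreducible'' to ``irreducible'' via proposition \ref{twoendvisualsplit}. Those steps are fine, and your explicit appeal to lemma \ref{L16N} to see that the edge groups of $\Psi_0$ lie in $M(W)$ is a point the paper leaves implicit (note that lemma \ref{L16N} assumes $\Lambda$ reduced and non-trivial, which you can arrange by reducing $\Lambda$ first; the trivial case is vacuous). The problem is your termination argument.

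Finiteness of the set of reduced visual decompositions combined with ``each step produces a genuinely new decomposition'' does not yield termination: at best it shows $\Psi_{i+1}\neq\Psi_i$, and a process on a finite state space that merely changes state at each step can still cycle, e.g.\ return to $\Psi_i$ two steps later. Moreover your witness for newness --- that the new edge with group $\langle E\rangle$ ``is introduced where no such edge previously existed'' --- is not sound: distinct edges of a visual decomposition can carry the same edge group (the paper points this out explicitly), and after your reduction step there is no canonical way to compare the location of an edge of $\Psi_{i+1}$ with the edges of earlier decompositions. What is needed, and what the paper uses, is a strictly decreasing quantity: by lemma \ref{MT3} a non-trivial compatible visual splitting replaces the vertex labelled $V\subset S$ by vertices labelled $A\cap V$ and $B\cap V$, and both are \emph{proper} subsets of $V$ (if $A\cap V=V$ then $B\cap V=(A\cap V)\cap(B\cap V)=E$, contradicting non-triviality), while your subsequent reduction only deletes vertices. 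Hence, for instance, $\sum 3^{\vert V\vert}$ summed over the vertex labels strictly decreases at every step, so the iteration halts; this is exactly the paper's ``fewer generators'' argument made quantitative. With that substitution (the finiteness of the set of reduced visual decompositions is true but neither needed nor, as used, sufficient) your proof is complete and coincides with the paper's.
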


\begin{proof}
Applying theorem \ref{MT1} to $\Lambda$, we get a reduced
visual graph of groups $\Psi$ from $\Lambda$.  If $\Psi$ looks
irreducible with respect to $M(W)$ splittings, then we are done.
Otherwise, some vertex group of $\Psi$ visually splits nontrivially and
compatibly over an $M(W)$ special subgroup and we replace the
vertex with this visual splitting in $\Psi$. We can repeat,
replacing some special vertex group by special vertex groups with
fewer generators, until we must reach a visual graph of groups
which looks irreducible with respect to $M(W)$ splittings.
\end{proof}

Theorem \ref{Close} describes how ``close'' a
decomposition with $M(W)$ edge groups, which is irreducible with
respect to $M(W)$ splittings, is to a visual one.

\medskip

\noindent {\bf Theorem 2} {\it Suppose $(W,S)$ is a finitely generated Coxeter system and $\Lambda$ is a reduced
graph of groups decomposition of $W$ with $M(W)$ edge groups. If
$\Lambda$ is irreducible with respect to $M(W)$ splittings, and
$\Psi$ is a reduced graph of groups decomposition such that each edge group of $\Psi$ is in $M(S)$, each vertex group of $\Psi$ is a subgroup of a conjugate of a vertex group of  
$\Lambda$, and each edge group of $\Lambda$ contains a conjugate of an edge group of $\Psi$ (in particular if $\Psi$ is a reduced visual graph of groups decomposition for $(W,S)$ derived from $\Lambda$ as in the main theorem of \cite{MTVisual}), then
\begin{enumerate}
\item  $\Psi$ is irreducible with respect to $M(W)$ splittings

\item  There is a (unique) bijection $\alpha$ of the vertices
of $\Lambda$ to the vertices of $\Psi$ such that for each vertex
$V$ of $\Lambda$, $\Lambda(V)$ is conjugate to $\Psi(\alpha (V))$

\item  When $\Psi$ is visual, each edge group of $\Lambda$ is conjugate to a visual
subgroup for $(W,S)$.
\end{enumerate}}

\begin{proof}
Consider a vertex $V$ of $\Lambda$ with vertex group
$A=\Lambda(V)$. By theorem \ref{T1N}, $\Lambda (V)$ has a graph of groups decomposition $\Phi_V$ such that $\Phi _V$ is compatible with $\Lambda$, each edge group of $\Phi_V $ is in $M(W)$ and each vertex group of $\Phi_V$ is conjugate to a vertex group of $\Psi$ or conjugate to $\Lambda(E)$ for some edge $E$ of $\Lambda$ adjacent to $V$. Since $\Lambda$ is reduced and irreducible with respect to $M(W)$ splittings, $\Phi_V$ has a single vertex and $\Lambda(V)$ is conjugate to $\Psi(V')$ for some vertex $V'$ of $\Psi$. 

Since no vertex group of $\Psi$ is
contained in a conjugate of another, $V'$ is uniquely determined,
and we set $\alpha(V)=V'$. No vertex group of $\Lambda$ is conjugate to another so $\alpha$ is injective.  Since each vertex group $\Psi(V')$ is
contained in a conjugate of some $\Lambda(V)$ which is in turn
conjugate to $\Psi(\alpha(V))$ we must have $V'=\alpha(V)$ and
each $V'$ is in the image of $\alpha$.

If $\Psi$ is not irreducible with respect to $M(W)$ splittings,
then it does not look irreducible with respect to $M(W)$
splittings and some vertex group $W_1$ of $\Psi$ visually splits
nontrivially and compatibly over an $M(W)$ special subgroup $U_1$. 
Reducing gives a visual graph of groups decomposition $\Psi_1$ of $W$ satisfying the hypotheses on $\Psi$ in the statement of the theorem. Now $W_1$ is conjugate to a vertex group $A$
of $\Lambda$ and the above argument shows $A$ is conjugate to a vertex group of $\Psi_1$. But then, $W_1$ is conjugate to a vertex group of $\Psi_1$, which is nonsense.   Instead, $\Psi$ is irreducible with respect to $M(W)$
splittings.

Since $\Lambda$ is a tree, we can take each edge group of
$\Lambda$ as contained in its endpoint vertex groups taken as
subgroups of $W$. Hence each edge group is simply the
intersection of its adjacent vertex groups (up to conjugation).
Since vertex groups of $\Lambda$ are conjugates of
vertex groups in $\Psi$, their intersection is conjugate to a
special subgroup (by lemma \ref{Kil}) when $\Psi$ is visual.
\end{proof}

\begin{example}\label{simex2}
Let $W$ have the Coxeter presentation:
$$\langle s_1,s_2,s_3,s_4,s_5: s_k^2, (s_1s_2)^2, (s_2s_3)^2,
(s_3s_4)^2, (s_4s_5)^2\rangle \times \langle s_6,s_7:s_k^2\rangle
$$

\noindent Then $W$ is 1-ended and has the following visual
$M(W)$-irreducible decomposition (each edge group is 2-ended):
$$\!\langle s_1,s_2,s_6,s_7\rangle \ast
_{\langle s_2,s_6,s_7\rangle}\langle s_2,s_3,s_6,s_7\rangle\ast
_{\langle s_3,s_6, s_7\rangle }\langle s_3,s_4,s_6,s_7\rangle\ast
_{\langle s_4,s_6,s_7\rangle}\langle s_4, s_5, s_6,s_7\rangle\!$$
There is an automorphism of $W$ sending $s_5$ to $s_3s_5s_3$ and
all other $s_i $ to themselves. This gives another
$M(W)$-irreducible decomposition of $W$ where the last vertex
group $\langle s_4, s_5, s_6,s_7\rangle$ of the above graph of
groups decomposition is replaced by $\langle s_4, s_3s_5s_3,
s_6,s_7\rangle$. As $s_3$ does not commute with $s_1$ we see that
in regard to part 2 of theorem \ref{Close}, a single element of
$W$ cannot be expected to conjugate each vertex group of an
arbitrary $M(W)$-irreducible decomposition to a corresponding
vertex group of a corresponding visual $M(W)$-irreducible
decomposition.
\end{example}

\noindent {\bf Theorem 1}  {\it Finitely generated Coxeter groups
are strongly accessible over minimal splittings.}

\begin{proof}
Suppose $(W,S)$ is a finitely generated Coxeter system. There are
only finitely many elements of $K(W,S)$ (which includes the
trivial group).  For $G$ a subgroup of $W$ let $n(G)$ be the
number of elements of $K(W,S)$ which are contained in any
conjugate of $G$ (so $1\leq n(G)\leq n(W)$). For $\Lambda$ a
finite graph of groups decomposition of $W$, let
$c(\Lambda)=\Sigma_{i=1}^{n(W)} 3^ic_i(\Lambda)$ where $c_i(\Lambda)$ is the count of
vertex groups $G$ of $\Lambda$ with $n(G)=i$. 

 If $\Lambda$ reduces to $\Lambda'$ then
clearly $c_i(\Lambda ')\leq c_i(\Lambda)$ for all $i$, and for some $i$, $c_i(\Lambda ')$ is strictly less than $c_i(\Lambda)$. Hence, $c(\Lambda ')<c(\Lambda)$.  

If $\Lambda$ is reduced with $M(W)$ edge groups, and a
vertex group $G$ of $\Lambda$ splits non-trivially and compatibly as $A*_CB$ to
produce the decomposition $\Lambda'$ of $W$, then every subgroup of a conjugate of
$A$ or $B$ is a subgroup of a conjugate of $G$, but, by proposition \ref{Kreduce}, some element of $K(W,S)$ is contained in a conjugate of
$B$, and so of $G$, but not in a conjugate of $A$. Hence
$n(A)<n(G)$, and similarly $n(B)<n(G)$. This implies that
$c(\Lambda')<c(\Lambda)$ since $c_{n(G)}$ decreases by 1 in going from $\Lambda$ to $\Lambda'$  and the only other $c_i$ that change are $c_{n(A)}$ and $c_{n(B)}$, which are both increased by 1 if $n(A)\ne n(B)$ and $c_{n(A)}$ increases by 2 if $n(A)=n(B)$, but $c_{n(A)}$ and $c_{n(B)}$ have smaller coefficients than $c_{n(G)}$ in the summation $c$. More specifically, $c(\Lambda)-c(\Lambda')=3^{n(G)}-(3^{n(A)}+3^{n(B)})>0$. 

If $\Lambda$  is the trivial decomposition of $W$, then $c(\Lambda) =3^{\vert K(W,S)\vert}$ and we define this number to be $C(W,S)$.
Suppose $\Lambda_1,\ldots ,\Lambda_k$ is a sequence of reduced graph of groups decompositions of $W$ with $M(W)$ edge groups, such that $\Lambda_1$ is the trivial decomposition and $\Lambda_i$ is obtained from $\Lambda_{i-1}$ by splitting a vertex group $G$ of $\Lambda_{i-1}$ non-trivially and compatibly as $A\ast _CB$, for $C\in M(W)$ and then reducing. We have shown that $c(\Lambda_i)<c(\Lambda_{i-1})$ for all $i$, and so $k\leq C(W,S)$. In particular, 
$W$ is strongly accessible over $M(W)$ splittings
\end{proof}

\section{Generalizations, Ascending HNN extensions (and a group of Thompson) and  Closing questions}

Recall that if $G$ is a group and $H$ and $K$ are subgroups of $G$
then $H$ is smaller than $K$ if $H\cap K$ has finite index in $H$
and infinite index in $K$. Suppose $W$ is a finitely generated
Coxeter group and $\cal C$ is a class of subgroups of $W$ such
that for each $G\in \cal C$, any subgroup of $G$ is in $\cal C$,
e.g. the virtually abelian subgroups of $W$. Define $M(W,{\cal
C})$, {\it the minimal $\cal C$ splitting subgroups of $W$}, to
be the set of all subgroups $H$ of $W$ such that $H\in \cal C$,
$W$ splits non-trivially over $H$ and for any $K\in \cal C$ such
that $W$ splits non-trivially over $K$, $K$ is not smaller than
$H$. Then the same line of argument as used in this paper shows
that $W$ is strongly accessible over $M(W,{\cal C})$ splittings.

If $(W,S)$ is a finitely generated Coxeter system and $\Psi$ is
an $M(W)$-irreducible graph of groups decomposition of $W$ with
$M(W)$-edge groups, then by theorem \ref{Close}, each vertex group
$V$ of $\Psi$ is a Coxeter group with Coxeter system $(V,A)$
where $A$ is conjugate to a proper subset of $S$. The collection
$M(V)$ is not, in general, a subset of $M(W)$, and so $V$ has an
$M(V)$-irreducible graph of groups decomposition with $M(V)$-edge
groups. As $\vert A\vert <\vert S\vert$, there cannot be a
sequence $\Psi=\Psi_0,\Psi_1,\ldots ,\Psi_n$, with $n>\vert
S\vert$, of distinct graph of groups decompositions where $\Psi$
is $M(W)$-indecomposable with $M(W)$-edge groups, for $i>0$,
$V_i$ a vertex group of $\Psi_{i-1}$ and $\Psi_i$ is
$M(V_i)$-indecomposable with edge groups in $M(V_i)$. Such a
sequence must terminate with a special subgroup of $W$ that has
no non-trivial decomposition. By the FA results of
\cite{MTVisual}, that group must have a complete presentation
diagram.

Suppose $B$ is a group, and $\phi:A_1\to A_2$ is an isomorphism of subgroups of $B$. The group $G$ with presentation $\langle t,B:t^{-1}at=\phi(a) \hbox{ for } a\in A_1\rangle$ is called an {\it HNN extension} with {\it base group} $B$, {\it associated subgroups} $A_i$ and {\it stable letter} $t$. If $A_1=B$ then the HNN extension is  {\it ascending} and if additionally, $A_2$ is a proper subgroup of $B$ (i.e. $A_2\ne B$), then the HNN extension is {\it strictly ascending}. 

The bulk of this section is motivated by an example of Richard Thompson. Thompson's group $F$ is finitely presented and is an ascending HNN extension of a group isomorphic to $F$. Hence $F$ is not ``hierarchical accessible" over such splittings (see question 1 below).  If a group $G$ splits as an ascending HNN extension, then (by definition) there is no splitting of the base group which is compatible with the first splitting, so standard accessibility is not an issue. The only question is that of minimality of such splittings. 

\begin{theorem} \label{ind}
Suppose $A$ is a  finitely generated group and $\phi:A\to A$ is a monomorphism. Let $G\equiv \langle t, A:t^{-1}at=\phi (a) \hbox{ for }a\in A\rangle$ be the resulting ascending HNN extension. Then:

1)  If $\phi (A)$ has infinite index in $A$, this splitting of $G$ is not minimal and there is no finitely generated subgroup $B$ of $G$ such that $B$ is smaller than $A$, $G$ splits as an ascending HNN extension over $B$ and this splitting over $B$ is minimal.  

2) If $\phi(A)$ has finite index in $A$, then there is no finitely generated subgroup $B$ of $G$ such that $B$ is smaller than $A$ and $G$ splits as an ascending HNN extension over $B$.\end{theorem}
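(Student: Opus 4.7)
The plan is to introduce a uniform piece of machinery for both parts via the canonical epimorphism $\rho\colon G\to\mathbb Z$ with $\rho(t)=1$ and $\rho(A)=0$, whose kernel is $N=\bigcup_k t^kAt^{-k}$, the ascending union of the chain $A\subseteq tAt^{-1}\subseteq\cdots$. If $G$ also splits as an ascending HNN extension $G=\langle s,B\mid s^{-1}bs=\psi(b)\rangle$, I will write $\rho'\colon G\to\mathbb Z$ for the analogous epimorphism with $\rho'(s)=1$, $\rho'(B)=0$, and $N_B=\bigcup_k s^kBs^{-k}=\ker\rho'$.

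First, part 1(a) is immediate: reinterpreting the same presentation as $G=A\ast_{\phi(A)}$ with edge group $\phi(A)$ (via the embeddings $\phi^{-1}$ and inclusion of $\phi(A)$ into the base $A$) gives a non-trivial splitting over $\phi(A)$, and $\phi(A)\cap A=\phi(A)$ has index $1$ in $\phi(A)$ and infinite index in $A$, so $\phi(A)$ is smaller than $A$. The same observation applied to the $B$-splitting shows that whenever $[B:\psi(B)]=\infty$ the $B$-splitting is not minimal, which disposes of the strictly ascending case of 1(b).

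For what remains, suppose $B$ is finitely generated, smaller than $A$, and $G=\langle s,B\rangle$ is the ascending HNN extension under consideration. The key intermediate claim is $N=N_B$. Since $B\cap A$ has finite index in $B$ and lies in $A\subseteq N$, the image of $B$ in $G/N\cong\mathbb Z$ is a finite subgroup, hence trivial; this forces $B\subseteq N$. Because $G=\langle s,B\rangle$ and $\rho$ kills $B$, $\rho(s)$ generates $\mathbb Z$, so after possibly replacing $s$ by $s^{-1}$ I may assume $\rho(s)=1=\rho'(s)$. Then $\rho$ and $\rho'$ agree on $s$ and on $B$, hence on $G$, giving $N=N_B$. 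Since $A$ is finitely generated and $N_B$ is the ascending union $\bigcup_k s^kBs^{-k}$, there is $M\geq 0$ with $A\subseteq s^MBs^{-M}$, equivalently $s^{-M}As^M\subseteq B$.

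Both outstanding statements now fall out. For part 2, the hypothesis $[A:\phi(A)]<\infty$ shows $t$ commensurates $A$, hence $G=\langle t,A\rangle$ commensurates $A$; so $A\cap s^{-M}As^M$ has finite index in $A$, and being contained in $A\cap B$ forces $A\cap B$ to have finite index in $A$, contradicting that $B$ is smaller than $A$. For the remaining case of 1(b) where $[B:\psi(B)]=l$ is finite, each step of the chain has index $l$, so $[s^MBs^{-M}:B]=l^M$; the natural injection of cosets $A/(A\cap B)\hookrightarrow s^MBs^{-M}/B$ then yields $[A:A\cap B]\leq l^M<\infty$, again contradicting that $B$ is smaller than $A$. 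The main obstacle is establishing $N=N_B$—above all showing $B\subseteq N$—after which the finite generation of $A$ does the heavy lifting of localising $A$ inside a single level of the $B$-tower.
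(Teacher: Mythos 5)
Your proof is correct, and it reaches the theorem by a partly different route from the paper. The paper deduces the theorem from Lemma \ref{asc}: after showing $N(A)=N(B)$ exactly as you do (finite index of $A\cap B$ in $B$ forces the image of $B$ in $G/N(A)\cong\mathbb Z$ to be trivial, so $B<N(A)$ and the two kernels coincide), it uses HNN normal forms to pin down $s=at^{\pm1}$ and then two-sided ``sandwich'' chains such as $A<s^pBs^{-p}<t^qAt^{-q}<s^{p'}Bs^{-p'}<t^{q'}At^{-q'}$ to prove parts 4) and 5) of that lemma, which give conclusions 2) and 1) of the theorem respectively. You keep the shared core (equal kernels of the two maps to $\mathbb Z$, plus finite generation to trap $A$ in a single level $s^MBs^{-M}$ of the $B$-tower), but you replace the sandwich/normal-form analysis by two other devices: for part 2, the observation that $[A:\phi(A)]<\infty$ makes $t$, and hence every element of $G=\langle A,t\rangle$, commensurate $A$, so $A\cap s^{-M}As^{M}\subseteq A\cap B$ has finite index in $A$; and for the remaining case of part 1, the explicit count $[s^MBs^{-M}:B]=l^M$ when $[B:\psi(B)]=l<\infty$, giving $[A:A\cap B]\leq l^M$, contradicting smallness. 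Both devices are correct and arguably cleaner than the paper's index chases; the paper's lemma, in exchange, yields extra structural information (e.g.\ $s=at$ with $a\in N(A)$ when $\phi(A)\neq A$) that the theorem itself does not need. One small caution: you should not literally ``replace $s$ by $s^{-1}$'' to normalize the sign of $\rho(s)$, since that would turn $\bigcup_k s^kBs^{-k}$ into a descending union and spoil the later containment; but nothing is lost, because only kernels are used and $\ker\rho=\ker(\pm\rho')$, so $N=N_B$ holds whichever sign $\rho(s)$ has, and $A\subseteq s^MBs^{-M}$ is then available with the original $s$.
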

\begin{proof} 
First note that $G$ is also an ascending HNN extension over $\phi (A)$, (with presentation $\langle t, \phi (A): t^{-1}at=\phi (a) \hbox{ for } a\in \phi (A)\rangle$. Hence if $\phi(A)$ has infinite index in $A$, the splitting over $A$ is not minimal. Part 5) of  lemma \ref{asc} implies the second assertion of part 1) of the theorem. Part 4) of lemma \ref{asc} implies part 2) of the theorem. 
\end{proof}

\begin{lemma} \label{asc}
Suppose $\phi:A\to A$ and $\tau:B\to B$ are  monomorphisms of  finitely generated subgroups of $G$, and the corresponding ascending HNN extensions are isomorphic to $G$.
$$G\equiv \langle A,t:t^{-1}at=\phi(a) \hbox{ for all }a\in A\rangle\equiv \langle s,B:s^{-1}bs=\tau(b) \hbox { for all } b\in B\rangle$$ 
If $A\cap B$ has finite index in $B$ (so $B$ is potentially smaller than $A$). Then:

1)  The normal closures $N(A)$ and $N(B)$ in $G$ are equal. 

2) If $\phi(A)\ne A$ then $s=at$ for some $a\in N(A)$

3) If $\phi(A)=A$, then $\tau(B)=B$ (so $N(A)= A=B=N(B)$) and 
$s=at^{\pm 1}$ for some $a\in A$.

4) If $\phi(A)$ has finite index in $A$, then $A\cap B$ has finite index in $A$ (so $B$
is not smaller than $A$) and $\tau(B)$ has finite index in $B$.

5) If  $\phi(A)$ has infinite index in $A$, then $\tau(B)$ has infinite index in $B$. 
\end{lemma}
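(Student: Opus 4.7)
The plan is to base everything on the natural quotient map $\pi_A: G\to G/N(A)\cong\mathbb{Z}$ sending $A$ to $0$ and $t$ to $1$. For part (1), $A\cap B$ is a finite-index subgroup of $B$ contained in $N(A)$, so $\pi_A(B)$ is a finite subgroup of $\mathbb{Z}$, hence trivial; thus $B\subset N(A)$ and $N(B)\subset N(A)$. The induced surjection $G/N(B)\to G/N(A)$ between two copies of $\mathbb{Z}$ must be an isomorphism, so $N(A)=N(B)$. Consequently $\pi_A=\pm\pi_B$, and evaluating at $s$ gives $s=nt^{\pm 1}$ for some $n\in N(A)$.

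For part (3), suppose $\phi(A)=A$. Then $N(A)=A$, so $B\subset A$ and $A=\bigcup_{k\geq 0}s^kBs^{-k}$. Since $A$ is finitely generated and the union is ascending, $s^kBs^{-k}=A$ for some $k$; because the successive indices all equal $[B:\tau(B)]=:e$ and the chain cannot exit $N(B)=A$, we must have $e=1$, i.e., $\tau(B)=B$, and therefore $B=A$. Then $s=at^{\pm 1}$ for some $a\in A$ is immediate from $\pi_A(s)=\pm 1$ together with $N(A)=A$. For part (2), assume $\phi(A)\ne A$, so that the chain $U_k:=t^kAt^{-k}$ is strictly ascending with union $N(A)$. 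I would rule out $s=nt^{-1}$ as follows: compute $s^k=M_kt^{-k}$ where $M_k=n\cdot(t^{-1}nt)\cdots(t^{-(k-1)}nt^{k-1})$; if $n\in U_{n_1}$, each factor $t^{-i}nt^i$ lies in $U_{n_1}$, so $M_k\in U_{n_1}$, and since also $t^{-k}Bt^k\subset U_{n_0}$ for all $k\geq 0$ (where $B\subset U_{n_0}$), the entire chain $s^kBs^{-k}$ stays inside the fixed subgroup $U_{\max(n_0,n_1)}$, contradicting the strict ascent of $\bigcup_ks^kBs^{-k}=N(A)$. Hence $s=nt$.

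For parts (4) and (5), write $s=nt$ and choose $n_0$ so that $B\subset U_{n_0}$ and $n\in U_{n_0+1}$. The driving identity comes from $\sigma(B):=sBs^{-1}=c_n(\theta(B))$, where $\theta(x)=txt^{-1}$ and $c_n$ is conjugation by $n$; since $n$ and $\theta(B)$ both lie in $U_{n_0+1}$, $c_n$ restricts to an inner automorphism of $U_{n_0+1}$, giving $[U_{n_0+1}:\sigma(B)]=[U_{n_0+1}:\theta(B)]=[U_{n_0}:B]$, which I call $c$. Comparing the two decompositions $[U_{n_0+1}:B]=d\cdot c=c\cdot e$ yields the identity $ce=dc$, so $e=d$ as soon as $c$ is finite. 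The main obstacle is proving that $d<\infty$ forces $c<\infty$: my plan is to use that $U_{n_0+k}$ is finitely generated and $N(A)=\bigcup_mV_m$ (where $V_m:=s^mBs^{-m}$), so $U_{n_0+k}\subset V_{m(k)}$ for some $m(k)\geq k$; iterating the inner-automorphism argument gives $V_{m(k)}\subset U_{n_0+m(k)}$ with $[U_{n_0+m(k)}:V_{m(k)}]=c$, and the chain of inclusions then produces $c\cdot[V_{m(k)}:U_{n_0+k}]=d^{m(k)-k}$, which forces $c$ finite whenever $d$ is. Once $c<\infty$, we get $e=d<\infty$, and the coset identity $ci=d^{n_0}[A:A\cap B]$ (with $i=[B:A\cap B]$) shows $[A:A\cap B]<\infty$, completing part (4). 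Part (5) follows by the symmetric argument: writing $t=n''s$ with $n''\in N(A)$ and setting $c'=[V_{m_0}:A]$ for $A\subset V_{m_0}$, the dual identity $c'd=c'e$ together with the same telescoping yields $e<\infty\Rightarrow c'<\infty\Rightarrow d=e<\infty$, so contrapositively $d=\infty$ implies $e=\infty$.
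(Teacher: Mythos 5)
Your proof is correct, and parts (1)--(3) follow essentially the paper's line: your quotient-map argument ($\pi_A=\pm\pi_B$, giving $s=nt^{\pm1}$ with $n\in N(A)$) merely replaces the paper's appeal to HNN normal forms $s=t^pa_1t^{-q}$ with $|p-q|=1$, and your exclusion of $s=nt^{-1}$, by trapping $\bigcup_k s^kBs^{-k}=N(B)=N(A)$ inside a single $t^rAt^{-r}$ and contradicting strict ascent, is exactly the paper's device. Where you genuinely diverge is in (4) and (5). The paper argues by interleaving the two ascending chains: since $A$ and $B$ are finitely generated subgroups of $N(A)=N(B)$, one obtains sandwiches $A<s^pBs^{-p}<t^qAt^{-q}<s^{p'}Bs^{-p'}<t^{q'}At^{-q'}$ and $B<t^kAt^{-k}<s^jBs^{-j}$, and finiteness (or infiniteness) of indices is simply pushed along these inclusions; this is short and needs no bookkeeping. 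You instead work inside one tower $U_k=t^kAt^{-k}$, use the identity $sBs^{-1}=n(tBt^{-1})n^{-1}$ with $n\in U_{n_0+1}$ to get $[U_{n_0+1}:sBs^{-1}]=[U_{n_0}:B]=c$, and a telescoped index $d^{M}=c\cdot[V_M:U_{n_0}]$ to force $c<\infty$ when $d=[A:\phi(A)]<\infty$; the relations $dc=ce$ and $ci=d^{n_0}[A:A\cap B]$ then give (4), and the dual computation inside the tower $V_m=s^mBs^{-m}$ gives (5). This is more computational than the paper's sandwich, but it buys a sharper byproduct the paper does not record: $[B:\tau(B)]=[A:\phi(A)]$ whenever either index is finite. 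One small point to patch: in (4) you begin by writing $s=nt$, which you established in (2) only under the hypothesis $\phi(A)\ne A$; when $\phi(A)=A$ the conclusion of (4) is immediate from (3) (there $\tau(B)=B$ and $A=B$), so that degenerate case should be routed through (3) explicitly. In (5) there is no such issue, since infinite index forces $\phi(A)\ne A$.
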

\begin{proof}
Let $A_0=A$ and let $A_i=t^iA_0t_i^{-i}$.  Then $t^{-1}A_it=A_{i-1}<A_i$.  Note that $N(A)=\cup _{i=0}^\infty A_i$.  Let $\pi:G\to G/N(A)\equiv \mathbb Z$ be the quotient map. Since $A\cap B$ has finite index in $B$, $\pi(B)$ is finite (and hence trivial). This implies $B<N(A)$. As $B$ is finitely generated, $B<A_m$ for some $m$. This also implies that $\langle \pi(s)\rangle=\langle \pi(t)\rangle=\mathbb Z$ and so $N(B)=N(A)$, completing 1).  

Normal forms in ascending HNN extensions imply $s=t^pa_1t^{-q}$ for some $p,q\geq 0$ and $a_1\in A$. This implies $|p-q|=1$. Hence $s=at^{\pm 1}$ for $a\in A_p$. 

Suppose $s=at^{-1}$. Let $r$ be the maximum of $m$ and $p$. Note that 
$$N(A)=N(B)=\cup_{i=0}^{\infty}s^iBs^{-i}=\cup _{i=0}^\infty (at^{-1})^iB(ta^{-1})^i<A_r$$ 
(since, $t^{-1}Bt<t^{-1}A_mt=A_{m-1}<A_r$ and (as $a\in A_r$) $aA_ra^{-1}=A_r$). But if $\phi(A)\ne A$, $A_{r+1}\not <  A_r$. Instead, $s=at$, completing 2).

If $\phi(A)=A$, then $N(A)=A$. As $N(B)=A$ is finitely generated, $N(B)=\cup _{i=0}^n s^iBs^{-i}=s^nBs^{-n}$ for some $n>0$. So $N(B)=B$ completing 3).

If $\phi(A)$ has finite index in $A$ then $A$ has finite index in $A_i$ for all $i\geq 0$. Since $N(A)=N(B)$, and $A$ and $B$ are finitely generated, there are positive integers $p<p'$ and $q<q'$ such that
$$A<s^pBs^{-p}<t^qAt^{-q}<s^{p'}Bs^{-p'}<t^{q'}At^{-q'}$$
Hence $B$ has finite index in $s^{p'-p}Bs^{-(p'-p)}$. This implies $B$ has finite index in $s^iBs^{-i}$ for all $i\geq 0$ and also $\tau (B)$ has finite index in $B$. Similarly, there are positive integers $j$ and $k$ such that
$$B<t^kAt^{-k}<s^jBs^{-j}$$
Hence $B$ and $A$ (and so $A\cap B$) have finite index in $t^kAt^{-k}$. This implies $A\cap B$ has finite index in $A$ and 4) is complete.

Assume $\phi(A)$ has infinite index in $A$. As $A$ and $B$ are finitely generated subgroups of $N(A)=N(B)$, there positive integers $k$ and $j$ such that 
$$B<t^kAt^{-k}<s^jBs^{-j}$$
The group $B$ does not have finite index in $t^kAt^{-k}$ since otherwise $A\cap B$ (and then $A$) would have finite index in $t^kAt^{-k}$. This implies $B$ has infinite index in $s^jBs^{-j}$. This in turn implies $B$ has infinite index in $s^iBs^{-i}$ for all $i\geq 0$. This also implies $\tau(B)$ has infinite index in $B$. 
\end{proof}

\begin{example} {\bf (Thompson's Group)}
In unpublished work, R. J. Thompson introduced a group, traditionally denoted $F$, in the context of finding infinite finitely presented simple groups. This group is now well studied in a variety of other contexts. The group $F$ has presentation 
$$\langle x_1,x_2,\ldots : x_i^{-1}x_jx_i=x_{j+1} \hbox { for } i<j\rangle$$ 
Well know facts about this group include: $F$  is $FP_\infty$ (\cite {BG}), in particular, $F$ is finitely presented (with generators $x_1$ and $x_2$), the commutator subgroup of $F$ is simple (\cite{Br}), and $F$ contains no free group of rank 2 (\cite{BS}). Clearly, $F$ is an ascending HNN extension of itself (with base group $\langle x_2,x_3,\ldots \rangle$ and stable letter $x_1$ - called the ``standard" splitting of $F$).

We are interested in understanding ``minimal" splittings of $F$ and more generally minimal splittings of finitely generated groups containing no non-abelian free group. We list some elementary facts.

\medskip

\noindent {\bf Fact 1.} {\it If $G$ contains no non-abelian free group and $G$ splits as an amalgamated free product $A\ast_CB$ then $C$ is of index 2 in both $A$ and $B$ and hence is normal in $G$. If $G$ splits as an HNN-extension, then this splitting is ascending. }

\medskip



\medskip

\noindent{\bf Fact 2.}
{\it The group $F$ does not split non-trivially as $A\ast_CB$}

\begin{proof}
Otherwise $C$ is normal in $F$ and $F/C$ is isomorphic to $\mathbb Z_2\ast \mathbb Z_2$. Since the commutator subgroup $K$ of $G$ is simple, $K\cap C$ is either trivial or $K$. The intersection is not $K$ since $F/C$ is not abelian. The intersection is non-trivial, since otherwise $K$ would inject under the quotient map $F\to F/C\equiv \mathbb Z_2\ast \mathbb Z_2$.
\end{proof}




By theorem \ref{ind} and the previous facts we have:

\medskip

\noindent{\bf Fact 3.}
{\it The only non-trivial splittings of $F$ are as ascending HNN extensions $\langle t,A:t^{-1}at=\phi(a) \hbox{ for all } a\in A\rangle$. For $A$ finitely generated, this splitting is minimal iff the image of the monomorphism $\phi:A\to A$ has finite index in $A$.}

\medskip



R. Bieri, W. D. Neumann and R. Strebel have shown that if $G$ is a finitely presented group containing no free group of rank 2 and $G$ maps onto $\mathbb Z\oplus \mathbb Z$, then $G$ contains a finitely generated normal subgroup $H$ such that $G/H\cong \mathbb Z$ (see theorem D of \cite{BNS} or theorem 18.3.8 of \cite{Ge}). Hence, there is a short exact sequence $1\to H\to F\to\mathbb Z\to1$ with $H$ finitely generated. 

\medskip

\noindent {\bf Fact 4.} {\it The ascending HNN extensions given by the short exact sequence $1\to H\to F\to\mathbb Z\to1$ (with $H$ finitely generated) are minimal splittings.}



\begin{theorem} Suppose $G$ is a finitely generated group containing  no non-abelian free subgroup. Suppose $G$ can be written as an ascending HNN extension $\langle t, A:t^{-1}at=\phi(a) \hbox{ for all } a\in A\rangle$ and as non-trivial amalgamated products $C\ast_DE$ and $H\ast_KL$  where all component groups are finitely generated, then:

1) $D\cap A$ does not have finite index in $A$ or  $D$ (so neither $A$ nor $D$ is smaller than the other),

2) if $D\cap K$ has finite index in $K$ then $K=D$ (so neither $D$ nor $K$ is smaller than the other),  

3) $C\ast_DE$ is a minimal splitting and $\langle t, A:t^{-1}at=\phi(a) \hbox{ for all } a\in A\rangle$ is minimal iff $\phi(A)$ has finite index in $A$.
\end{theorem}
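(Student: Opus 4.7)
The plan is to exploit Fact 1 from the paper: because $G$ contains no non-abelian free subgroup, the amalgamated splittings $C*_DE$ and $H*_KL$ force $D$ and $K$ to be of index $2$ in each adjacent vertex group, hence normal in $G$ with $G/D\cong G/K\cong\mathbb{Z}_2*\mathbb{Z}_2$. The ascending HNN extension gives the standard surjection $\pi:G\to G/N(A)\cong\mathbb{Z}$, where $N(A)=\bigcup_{i\geq 0}t^iAt^{-i}$. I will combine these three quotients with two elementary facts about the infinite dihedral group $\mathbb{Z}_2*\mathbb{Z}_2$: it has no nontrivial finite normal subgroup (every reflection has an infinite conjugacy class) and it is Hopfian (being finitely generated and residually finite).

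For part 1, I would rule out the two finite-index possibilities in turn. If $D\cap A$ has finite index in $D$, then $\pi(D)$ is a finite subgroup of $\mathbb{Z}$, hence trivial, so $D\subset N(A)$; this exhibits $\mathbb{Z}=G/N(A)$ as a quotient of $G/D\cong\mathbb{Z}_2*\mathbb{Z}_2$, which is impossible since a group generated by involutions admits no surjection to $\mathbb{Z}$. If instead $D\cap A$ has finite index in $A$, the image $\bar{A}\subset G/D$ is finite. The HNN relation $t^{-1}At\subset A$ forces $\bar t$ to normalise $\bar A$, and since $G/D=\langle\bar A,\bar t\rangle$, the subgroup $\bar A$ is normal in $G/D$. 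The finite-normal-subgroup fact then gives $\bar A=1$, making $G/D=\langle\bar t\rangle$ cyclic, contrary to $G/D\cong\mathbb{Z}_2*\mathbb{Z}_2$.

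For part 2, assume $D\cap K$ has finite index in $K$. The normal subgroup $K$ projects to a finite normal subgroup of $G/D\cong\mathbb{Z}_2*\mathbb{Z}_2$, which must be trivial, so $K\subset D$. Then $D/K$ is a normal subgroup of $G/K\cong\mathbb{Z}_2*\mathbb{Z}_2$, and the third isomorphism theorem presents it as the kernel of a surjection $\mathbb{Z}_2*\mathbb{Z}_2\cong G/K\to G/D\cong\mathbb{Z}_2*\mathbb{Z}_2$. Hopfianness forces this surjection to be an isomorphism, so $D/K=1$, i.e., $D=K$.

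For part 3, minimality of $C*_DE$ amounts to ruling out any finitely generated $B$ smaller than $D$ over which $G$ splits nontrivially. By Fact 1, such a splitting is either an amalgamated product, eliminated by applying part 2 with $K=B$ to force $B=D$ (contradicting smallness), or an ascending HNN extension, eliminated by applying part 1 with $A=B$ to deny that $B\cap D$ has finite index in $B$. The HNN-minimality statement is then symmetric: Theorem \ref{ind} handles both directions of the iff with respect to ascending HNN competitors, and any amalgamated competitor $X*_BY$ with $B$ smaller than $A$ is eliminated by applying part 1 to the given HNN extension over $A$ together with the splitting $X*_BY$. The principal technical hurdle is part 1 sub-case (a) --- extracting normality of $\bar A$ in $G/D$ from the asymmetric HNN relation $t^{-1}At\subset A$ --- after which everything else reduces mechanically to the two features of $\mathbb{Z}_2*\mathbb{Z}_2$ highlighted at the outset.
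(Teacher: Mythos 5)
Your proposal is correct and follows essentially the same route as the paper: the two quotients $G\to G/N(A)\cong\mathbb Z$ and $G\to G/D\cong\mathbb Z_2\ast\mathbb Z_2$ (via Fact 1), the structure of the infinite dihedral group, Hopficity for part 2, and the assembly of part 3 from Fact 1, parts 1)--2), and Theorem \ref{ind}. The only deviations are cosmetic: where the paper conjugates $p(A)$ (resp.\ $p(K)$) into $\langle x\rangle$ and argues via centralizers/normality, you invoke directly that $\mathbb Z_2\ast\mathbb Z_2$ has no nontrivial finite normal subgroup, with $\bar A$ normalized by $\bar t$ because finiteness upgrades $\bar t^{-1}\bar A\bar t\subseteq\bar A$ to equality.
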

 
\begin{proof}
Let $q:G\to G/N(A)\equiv \mathbb Z$ and $p:G\to G/D\equiv \mathbb Z_2\ast \mathbb Z_2$ be the quotient maps.
If $D\cap A$ has finite index in $D$ then $q(D)$ is finite, so $q(D)$ is trivial and $D<N(A)$. But this implies there is a homomorphism from $\mathbb Z_2\ast \mathbb Z_2$ onto $\mathbb Z$, which is nonsense. 

If $D\cap A$ has finite index in $A$, then $p(A)$ is a finite subgroup of $\mathbb Z_2\ast \mathbb Z_2\equiv \langle x:x^2=1\rangle \ast \langle y:y^2=1\rangle$. Then $p(A)$ is a subgroup of a conjugate of $\langle x\rangle$ or $ \langle y\rangle$. Without loss, assume $p(A)<\langle x\rangle$. If $p(A)=1$, then $A<D$ and so $N(A)<D$. But this implies there is a homomorphism of $\mathbb Z$ onto $\mathbb Z_2\ast \mathbb Z_2$ which is nonsense. Hence $p(A)=\langle x\rangle$. But then $p(t)$ commute with $x$. This is implies $p(t)$ is trivial. This is impossible as $p(t)$ and $p(A)$ generate $\mathbb Z_2\ast \mathbb Z_2$. Part 1) is finished.

Suppose $D\cap K$ has finite index in $K$. Then as above we can assume that $p(K) <\langle x\rangle$. If $p(K)=1$, then $K<D$. If additionally $K\ne D$ then there is a homomorphism from $\mathbb Z_2\ast \mathbb Z_2$ onto $\mathbb Z_2\ast \mathbb Z_2$ with non-trivial kernel. This is impossible. Hence, either, $K=D$ or $p(K)=\langle x\rangle$. We conclude that $K=D$ since $p(K)$ is normal in $\mathbb Z_2\ast \mathbb Z_2$, and 2) is finished.

Fact 1, and part 2) implies $C\ast_DE$ is a minimal splitting. Fact 1, theorem \ref{ind} and part 1) imply $\langle t, A:t^{-1}at=\phi(a) \hbox{ for all } a\in A\rangle$ is minimal iff $\phi(A)$ has finite index in $A$. 
\end{proof}

 \end{example}

We conclude this paper with some questions of interest.

\begin{enumerate}
\item
For an arbitrary finitely generated Coxeter group $W$, is there a
sequence $\Lambda _1, \Lambda_2,\dots $ of graphs of groups such
that $\Lambda _1$ is a non-trivial decomposition of $W$ with edge
groups in $M(W)$, and for $i>1$, $\Lambda _i$ is a non-trivial
decomposition of a vertex group $V_i$ of $\Lambda _{i-1}$ with
$M(V_i)$-edge groups (but $\Lambda_i$ is not necessarily
compatible with $\Lambda _{i-1}$)? This sort of accessibility is called {\it  hierarchical} accessibility in analogy with 3-manifold decompositions). If no such sequence exists,
then does a last term of such a splitting sequence have no
splittings of any sort (is it FA)? Would such a last term always
be visual?

\item
Is there a JSJ theorem for Coxeter groups over minimal
splittings? In \cite{MTJSJ}, we produce a JSJ result for Coxeter
groups over virtually abelian splitting subgroups that relies on splittings over minimal virtually abelian subgroups.

\medskip

For the standard strictly ascending HNN splitting of Thompson's group $F$ (given by $\langle x_1, x_2,\ldots :x_i^{-1}x_jx_i=x_{j+1} \hbox{ for } i<j\rangle$ - with base group $B\equiv\langle x_2,x_3,\ldots \rangle$ and stable letter $x_1$) there is no minimal splitting subgroup $C$ of $F$
with $C$ smaller than $B$. Hence, for finitely presented groups,  there is no analogue for proposition \ref{L24N}. Still $F$, and in fact all finitely generated groups containing no non-abelian free group, are strongly accessible over finitely generated  minimal splitting subgroups. 

\item
Are finitely presented groups (strongly) accessible over finitely generated 
minimal splittings?
 
\medskip

Finitely generated groups are not accessible over finite splitting subgroups (see D2), and hence finitely generated groups are not accessible over minimal splittings.
 
\item Does Thompson's group split as a {\it strict} ascending HNN  extension with finitely generated base $A$ and monomorphism $\phi:A\to A$ such that $\phi(A)$ has finite index in $A$?
\end{enumerate}

\enddocument